\numberwithin{equation}{section}
\def\lcm{\operatorname{lcm}}
\def\rad{\operatorname{rad}}
\def\op{\operatorname{op}}
\def\End{\operatorname{End}}
\newtheorem{theorem}{Theorem}[section]
\newtheorem{theorem*}{Theorem}
\newtheorem{corollary}[theorem]{Corollary}
\newtheorem{corollary*}[theorem*]{Corollary}
\newtheorem{lemma}[theorem]{Lemma}
\newtheorem{proposition}[theorem]{Proposition}
\theoremstyle{definition}
\newtheorem{definition}[theorem]{Definition}
\newtheorem*{question*}{Question}
\newtheorem*{conjecture*}{Conjecture}
\newtheorem*{notation*}{Notation}
\newtheorem*{claim*}{Claim}
\begin{document}
\setlength{\baselineskip}{16pt}

\title
[Representation-finite tensor product algebras]
{Representation-finite tensor product algebras}

\author[Q. Wang]{Qi Wang}
\address{Yau Mathematical Sciences Center, Tsinghua University, Beijing 100084, China.}
\email{infinite-wang@outlook.com}
\thanks{2010 {\em Mathematics Subject Classification.} 16G10, 16G60, 16D80, 16S10.}
\keywords{Group algebras, triangular matrix algebras, tensor product of algebras, representation-finiteness.}

\begin{abstract}
In this paper, we complete the classification of representation-finite tensor product algebras in terms of quiver with relations.
\end{abstract}
\maketitle

\section{Introduction}
The representation type of algebra is a fundamental concept that describes the complexity of the module category for a given algebra, and therefore often serves as a necessary first step in the study of specific classes of algebras. It is well-established by Drozd in \cite{Dr-tame-wild} that the representation type of any finite-dimensional algebra $A$ over an algebraically closed field $k$ is precisely one of the following: representation-finite, (infinite-)tame, wild.

The study of the representation type of tensor product algebras has a long history, as evidenced by many works such as \cite{AR-triangular-matrix, BD-finite-group, S-nakayama, HM-selfinjective, L-special-alg}. A systematic approach to classifying tensor product algebras by their representation type can be attributed to Leszczy$\acute{\text{n}}$ski and Skowro$\acute{\text{n}}$ski. Their series of papers \cite{L-rep-type, LS-tame-triangular-matrix, LS-tame-tensor-product} provide a thorough analysis and complete description of non-wild tensor product algebras. Moreover, their work together with \cite{H-group-alg, BD-finite-group, MS-rep-finite-group, MS-rep-finite-group-correction, S-polynomial-growth-group} have distinguished representation-finite cases from non-wild cases for several classes of algebras, including group algebras $AG$ of finite groups $G$ over $A$, $n\times n$ triangular matrix algebras $T_n(A)$ over $A$, and enveloping algebras $A\otimes A^{\op}$ of $A$. Besides, the number of indecomposable modules over a representation-finite enveloping algebra $A\otimes A^{\op}$ could be explicitly calculated, see \cite{MZ20}. However, it remains unresolved to classify representation-finite cases from non-wild cases for general tensor product algebras. 

This paper aims to address and resolve the above open problem. Since $A\otimes k\simeq A$ for any algebra $A$, we focus on tensor products of non-simple algebras. It is established in \cite[Proposition 1.8]{L-rep-type} that the tensor product $A\otimes B\otimes C$ is representation-infinite for any three non-simple algebras $A, B, C$. Additionally, if $A\otimes B$ is representation-finite, then both $A$ and $B$ are representation-finite. Consequently, it remains to investigate the representation-finiteness of $A\otimes B$, for any two representation-finite non-simple algebras $A$ and $B$. 

We first review the results presented in \cite{LS-tame-triangular-matrix} concerning the representation-finiteness of $2\times 2$ triangular matrix algebras $T_2(B)$. Set $\mathbf{A}_2:=k(\xymatrix@C0.7cm{1\ar[r]&2})$. Then, we have $T_2(B):=T_2(k)\otimes B\simeq\mathbf{A}_2\otimes B$. The Galois covering technique plays a crucial role in the classification of these algebras. We refer to Subsection \ref{subsection-covering} for the construction of Galois coverings.
\begin{theorem*}[{\cite[Theorem 4]{LS-tame-triangular-matrix}}]\label{theorem-1}
Let $B$ be a representation-finite non-simple algebra. Then, $\mathbf{A}_2\otimes B$ is representation-finite if and only if the Galois coverings of $B$ and $B^{\op}$ do not contain one of the algebras in the family (IT) in \cite{LS-tame-triangular-matrix} as a quotient algebra.
\end{theorem*}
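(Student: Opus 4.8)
The plan is to reduce the statement to a question about simply connected locally bounded $k$-categories, settle that question with the theory of critical algebras, and then carry out the combinatorial bookkeeping that matches the resulting obstructions with the family (IT). Throughout I use that $T_2(B)^{\op}\simeq T_2(B^{\op})$ (transpose along the antidiagonal), so that representation-finiteness of $\mathbf{A}_2\otimes B$ is symmetric in $B$ and $B^{\op}$; this is why both $B$ and $B^{\op}$, and not just $B$, appear in the criterion.

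First I would set up the covering. Replacing $B$ by its standard form if necessary, $B$ admits a simply connected Galois covering $\widehat B\to B$ with group the fundamental group $\Pi$ of $B$, and any connected Galois covering of $B$ is covered by $\widehat B$, so it is harmless to work with $\widehat B$ alone. Since $\mathbf{A}_2$ is the path algebra of the linear $A_2$ quiver, it is simply connected, and the commutativity relations present in $\mathbf{A}_2\otimes\widehat B$ kill all would-be new cycles, so $\mathbf{A}_2\otimes\widehat B$ is again simply connected; it is the (universal) Galois covering of $T_2(B)=\mathbf{A}_2\otimes B$ with group $\Pi$. Invoking the covering criterion — Gabriel's push-down theorem together with the fact that $B$, being representation-finite, has only finitely many $\Pi$-orbits of indecomposables, which is what makes the nontrivial direction work — the algebra $T_2(B)$ is representation-finite if and only if the locally bounded category $\mathbf{A}_2\otimes\widehat B$ is locally representation-finite.

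Next I would translate local representation-finiteness of $\mathbf{A}_2\otimes\widehat B$ into a finite forbidden-subcategory condition. Since this category is simply connected and locally bounded, Bongartz's criterion applies: it is locally representation-finite if and only if no finite convex subcategory is critical (minimal representation-infinite), equivalently its Tits form is locally weakly positive. I would then analyse the shape of a finite convex subcategory $C$ of $\mathbf{A}_2\otimes\widehat B$: its quiver splits into a ``top'' and a ``bottom'' part, each supported on a convex subcategory of $\widehat B$, glued by connecting arrows along commutativity squares. Running through the known classification of critical algebras and recording, for each one that can sit convexly in such a two-layer category, the convex subcategory of $\widehat B$ onto which the top layer projects and — because the connecting arrows are oriented — the convex subcategory of $\widehat{B^{\op}}$ onto which the bottom layer projects, together with the extra relations forced by criticality, produces a finite list of configurations. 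Organising and normalising this list is exactly the family (IT) of \cite{LS-tame-triangular-matrix}, and the bookkeeping yields the dictionary: a critical convex subcategory occurs in $\mathbf{A}_2\otimes\widehat B$ if and only if some member of (IT) occurs as a quotient algebra of $\widehat B$ or of $\widehat{B^{\op}}$.

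Granting this dictionary, both implications follow quickly. If some $H$ in (IT) is a quotient of $\widehat B$ (or of $\widehat{B^{\op}}$), then $\mathbf{A}_2\otimes H$ is a quotient of $\mathbf{A}_2\otimes\widehat B$ and is representation-infinite by the very construction of (IT) (a one-parameter family is exhibited); since a quotient of a locally representation-finite category is locally representation-finite, $\mathbf{A}_2\otimes\widehat B$ is locally representation-infinite, hence $T_2(B)$ is representation-infinite. Conversely, if $T_2(B)$ is representation-infinite, then $\mathbf{A}_2\otimes\widehat B$ is locally representation-infinite, hence contains a critical finite convex subcategory, and the dictionary produces a member of (IT) as a quotient of $\widehat B$ or of $\widehat{B^{\op}}$. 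I expect the main obstacle to be the combinatorial core of the third step: extracting from the full classification of critical algebras exactly those that embed convexly into a category of the form $\mathbf{A}_2\otimes(\text{simply connected})$, describing each such embedding layer by layer together with the criticality-forced relations, and checking that the normalised list obtained is precisely the family (IT) — with the reduction of non-standard $B$ to its standard form and the orbit-counting needed to make the covering criterion valid in both directions as the subsidiary technical points.
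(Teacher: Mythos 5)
This statement is not proved in the paper at all: it is quoted verbatim as Theorem~4 of Leszczy\'nski--Skowro\'nski \cite{LS-tame-triangular-matrix} and used as a black box, so there is no internal proof to compare yours against. Measured against the cited source, your outline does follow the right strategy --- pass to the standard form and the simply connected Galois covering $\widehat B$, identify $\mathbf{A}_2\otimes\widehat B$ as the covering of $T_2(B)$ (this is \cite[Lemma 1.7]{L-rep-type}, also quoted in Section~2 of the present paper), reduce representation-finiteness to local representation-finiteness via Gabriel's covering theorem, and then invoke the Bongartz criterion that a simply connected locally bounded category is locally representation-finite iff it contains no convex critical subcategory. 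The opposite-algebra symmetry $(\mathbf{A}_2\otimes B)^{\op}\simeq\mathbf{A}_2\otimes B^{\op}$ correctly explains why both $B$ and $B^{\op}$ enter the criterion.

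The genuine gap is that the entire mathematical content of the theorem --- the ``dictionary'' asserting that a critical convex subcategory occurs in $\mathbf{A}_2\otimes\widehat B$ if and only if a member of (IT) is a quotient of $\widehat B$ or $\widehat{B^{\op}}$ --- is stated but not established. Deriving the list (IT) from the classification of critical algebras is not bookkeeping one can defer: it is the theorem, and occupies most of \cite{LS-tame-triangular-matrix}. Two smaller points deserve care if you were to carry this out. First, ``contains as a quotient algebra'' in this context means a quotient of an idempotent truncation, whereas Bongartz's criterion detects convex subcategories; your dictionary silently conflates the two, and reconciling them (extra relations forced by criticality versus relations already present) is part of the work. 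Second, you attribute the need for finitely many orbits of indecomposables to the wrong direction of the push-down argument: the free action on indecomposables is what makes the push-down preserve indecomposability and non-isomorphism (giving ``locally representation-finite $\Rightarrow$ representation-finite''), while the direction ``representation-finite $\Rightarrow$ the universal cover is locally representation-finite'' is the substance of Gabriel's theorem \cite{G-universal-cover}. Neither slip is fatal, but the missing classification step means the proposal is a plan rather than a proof.
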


Miyamoto and the author \cite{MW21} have completely determined the representation-finiteness of $A\otimes B$, where both $A$ and $B$ are representation-finite simply connected algebras with at least three simple modules (i.e., $|A|, |B|\ge 3$). Note that a representation-finite simply connected algebra $A$ with $|A|=2$ is exactly $\mathbf{A}_2$. We refer to \cite{Assem-simply-connected} for an in-depth survey of simply connected algebras, and refer to \cite{BG-standard form} for the relation between the Galois covering technique and simply connected algebra. Let $\mathbb{A}_n$ be the unoriented Dynkin diagram with $n$ vertices. Set $N(n):=k\overset{\rightarrow}{\mathbb{A}}_n/(\rad^2 k\overset{\rightarrow}{\mathbb{A}}_n)$ with
$$
\overset{\rightarrow}{\mathbb{A}}_n : \xymatrix@C=0.7cm@R=0.5cm{
1\ar[r]&2\ar[r]&3\ar[r]&4\ar[r]&\cdots\ar[r]& n-1\ar[r]& n}.
$$
We then review the main results of \cite{MW21} as follows.

\begin{theorem*}[{\cite[Theorem 3.11, Theorem 4.8, Theorem 4.16]{MW21}}]\label{theorem-2}
Let $A$ and $B$ be two representation-finite simply connected algebras with $|A|, |B|\ge 3$. Then, $A\otimes B$ is representation-finite if and only if $A\simeq N(n)$ for $n\ge 3$, and one of the following holds.
\begin{enumerate}
\item[\rm{(1)}] $B\simeq N(m)$, for $m\ge 3$.

\item[\rm{(2)}] $B$ is a Nakayama algebra with $\rad^2 B\neq 0$ such that 
\begin{itemize}
\item $n=3$, $\rad^3 B=0$ and $B$ does not contain one of 
\begin{equation}\label{B_1}
\mathbf{B}_1:=k\left(
\xymatrix@C=1cm@R=0.5cm{
\circ\ar[r]^-\alpha&\circ\ar[r]^-\beta&\circ\ar[r]^-\gamma&\circ\ar[r]^-\delta&\circ}
\right)
\bigg/
\langle \alpha\beta\gamma, \beta\gamma\delta \rangle,
\end{equation}
\begin{equation}\label{B_2}
\mathbf{B}_2:=k\left(
\xymatrix@C=0.75cm@R=0.5cm{
\circ\ar[r]^-\alpha&\circ\ar[r]^-\beta&\circ\ar[r]^-\gamma&\circ\ar[r]^-\delta&\circ\ar[r]^-\sigma &\circ}
\right)
\bigg/
\langle \alpha\beta\gamma, \gamma\delta \rangle,
\end{equation}
and their opposite algebras as a quotient algebra.

\item $n\ge 4$ and $B$ does not contain 
\begin{equation}\label{B_3}
\mathbf{B}_3:=k\left(
\xymatrix@C=1cm@R=0.5cm{
\circ\ar[r]^-\alpha&\circ\ar[r]^-\beta&\circ\ar[r]^-\gamma&\circ}
\right)
\bigg/
\langle\alpha\beta\gamma\rangle
\end{equation}
as a quotient algebra.
\end{itemize}

\item[\rm{(3)}] $B$ is not a Nakayama algebra such that
\begin{itemize}
\item $B$ is isomorphic to the path algebra of $\xymatrix@C=0.7cm{\circ\ar[r]&\circ&\circ\ar[l]}$ or $\xymatrix@C=0.7cm{\circ&\circ\ar[r]\ar[l]&\circ}$.

\item $B$ is isomorphic to $\mathbf{B}_5$ or $\mathbf{B}_5^{\op}$, where
\begin{equation}\label{B_5}
\mathbf{B}_5:=k\left(
\xymatrix@C=1cm@R=0.5cm{
\circ\ar[r]^-\alpha&\circ&\circ\ar[l]_-\beta&\circ\ar[l]_-\gamma}
\right)
\bigg/
\langle \gamma\beta \rangle.
\end{equation}

\item $n=3$, $|B|\ge 5$ and $B$ does not contain one of the path algebras of type $\mathbb{A}_4$,
\begin{equation}\label{B_6}
\mathbf{B}_6:=k\left(
\xymatrix@C=1cm@R=0.5cm{
\circ\ar[r]^-\alpha&\circ&\circ\ar[l]_-\beta&\circ\ar[l]_-\gamma\ar[r]^-\delta&\circ}
\right)
\bigg/
\langle \gamma\beta \rangle,
\end{equation}
\begin{equation}
\mathbf{B}_7:=k\left(
\xymatrix@C=0.85cm@R=0.5cm{
\circ\ar[r]^-\delta &\circ\ar[r]^-\alpha&\circ&\circ\ar[l]_-\beta&\circ\ar[l]_-\gamma}
\right)
\bigg/
\langle \gamma\beta, \delta\alpha \rangle,
\end{equation}
and their opposite algebras as a quotient algebra.
\end{itemize}
\end{enumerate}
\end{theorem*}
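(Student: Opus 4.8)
The plan is to prove the two implications separately, using Galois covering theory together with Bongartz's weak-positivity criterion as the main engine. For the necessity, I would start from the elementary observations that a quotient algebra of a representation-finite algebra is representation-finite, and that $A'\otimes B'$ is a quotient of $A\otimes B$ whenever $A',B'$ are quotients of $A,B$. Since every connected non-simple algebra has $\mathbf{A}_2$ as a quotient (collapse all vertices but two adjacent ones), representation-finiteness of $A\otimes B$ already forces that of $T_2(A)=\mathbf{A}_2\otimes A$ and of $T_2(B)$; feeding this into the reviewed theorem of Leszczy\'nski--Skowro\'nski on $T_2(-)$ cuts the list of possible $A$ and $B$ down dramatically. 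The decisive point is then to show that if \emph{neither} $A$ nor $B$ is isomorphic to some $N(n)$, then $A\otimes B$ is representation-infinite: writing $A\not\simeq N(n)$ means either $Q_A$ is not linearly oriented $\overset{\rightarrow}{\mathbb{A}}_n$ or $I_A\neq\rad^2 A$, and in either case $A$ has a convex subcategory equal to the path algebra of linearly oriented $\mathbb{A}_3$, or to a Nakayama algebra with $\rad^3\neq 0$, or to a branch vertex $\circ\to\circ\leftarrow\circ$ (and symmetrically for $B$); a short direct check shows that the tensor product of any two such pieces already contains a critical (tame concealed) convex subcategory, hence is representation-infinite. This isolates $A\simeq N(n)$ up to interchanging the two factors.

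With $A=N(n)$ fixed, it remains to decide exactly when $N(n)\otimes B$ is representation-finite and to match the outcome with (1)--(3). For this I would pass to a suitable Galois covering $\widehat{N(n)\otimes B}\to N(n)\otimes B$ (available because $N(n)$ and the relevant $B$'s are strongly simply connected): it is a locally bounded, locally support-finite category, and by the covering criteria of Dowbor--Skowro\'nski together with Gabriel's theorem on standard forms, $N(n)\otimes B$ is representation-finite if and only if $\widehat{N(n)\otimes B}$ is. Being simply connected, the latter is governed by Bongartz's criterion: it is representation-finite precisely when its Tits form is weakly positive, equivalently when it contains no convex subcategory from the finite Happel--Vossieck/Bongartz list of critical algebras. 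So the whole problem reduces to identifying which critical convex subcategories can occur inside $\widehat{N(n)\otimes B}$ and showing that their absence is controlled exactly by $B$ not admitting one of $\mathbf{B}_1,\dots,\mathbf{B}_7$ or the listed path algebras as a quotient. One half of this --- that each of the forbidden $\mathbf{B}_i$ yields a representation-infinite tensor product --- is dispatched by exhibiting an explicit critical or hypercritical convex subcategory, or a one-parameter family of indecomposables, inside the relevant $\widehat{N(n)\otimes\mathbf{B}_i}$.

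The real obstacle is the converse, sufficiency half: checking that if $B$ avoids \emph{all} the forbidden quotients, then $\widehat{N(n)\otimes B}$ is genuinely representation-finite. This is a bookkeeping-heavy case analysis organized exactly along the trichotomy of the statement --- $B$ equal to some $N(m)$, $B$ Nakayama with $\rad^2 B\neq 0$, or $B$ non-Nakayama --- and within each case further split by $|B|$, by $\rad^3 B=0$ versus larger, by the position of the branch vertex, and crucially by $n=3$ versus $n\geq 4$. For each branch one must compute enough of the covering to certify weak positivity of its Tits form; the delicate cases are precisely the boundary ones in the statement (for instance $n=3$ with $|B|\geq 5$ and the $\mathbf{B}_6,\mathbf{B}_7$ exclusions, or $n=3$ with $\rad^3 B=0$ and the $\mathbf{B}_1,\mathbf{B}_2$ exclusions), where adjoining a single extra vertex or relation to $B$ is what tips the Tits form of the covering from weakly positive to indefinite. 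Combining all these verifications with the necessity argument above gives the stated equivalence.
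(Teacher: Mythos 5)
A point of context first: this theorem is not proved in the present paper at all. It is the main result of the companion paper \cite{MW21} and is reproduced here purely as an input (the ``proof'' given in the source is the citation to \cite[Theorems 3.11, 4.8, 4.16]{MW21}). So there is no internal proof to compare against; what can be assessed is whether your outline is a viable route to the result of \cite{MW21}. At the level of strategy it is. Your necessity step --- observe that an algebra with $|A|\ge 3$ fails to be a radical-square-zero linear Nakayama algebra exactly when it has some hereditary $\mathbf{A}_3^\epsilon$ as a quotient, and then show $\mathbf{A}_3^\epsilon\otimes\mathbf{A}_3^\omega$ is always representation-infinite by exhibiting a critical convex subcategory --- is precisely \cite[Theorem 3.11]{MW21}, recorded in this paper as Proposition \ref{prop::3*3}. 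Likewise, deciding representation-finiteness of $N(n)\otimes B$ by testing for critical (tame concealed) convex subcategories via Bongartz's criterion for simply connected algebras is the method actually used in \cite{MW21}.

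Two substantive criticisms. First, the Galois covering step in your second paragraph is vacuous for this statement: both factors are simply connected by hypothesis, so $N(n)\otimes B$ is itself simply connected by \cite[Lemma 1.7]{L-rep-type} (quoted in Section 2 of this paper), is its own universal cover, and Bongartz's criterion applies to it directly. Invoking Dowbor--Skowro\'nski and local support-finiteness buys nothing here; those tools are what the \emph{present} paper needs, because its factors are not simply connected, but they play no role in the theorem you are proving. Second, and more seriously, your outline presupposes the answer rather than deriving it. The entire content of the theorem is the specific list $\mathbf{B}_1,\dots,\mathbf{B}_7$, the dichotomy $n=3$ versus $n\ge 4$, and the constraint $\rad^3 B=0$; you assert that avoiding these configurations ``is controlled exactly by'' the absence of critical convex subcategories, but you give no indication of how that list is found, exhibit no critical subcategory inside $N(n)\otimes\mathbf{B}_i$ for even one forbidden $\mathbf{B}_i$, and certify weak positivity of the Tits form in no single allowed case. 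As written this is a sound research plan whose every theorem-specific step is deferred to unexecuted bookkeeping, so it does not yet constitute a proof.
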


In this paper, we address the remaining cases of $A\otimes B$ where at least one of the algebras $A$ and $B$ is not simply connected. Set $N^\circ(n):=k\Delta_n/(\rad^2 k\Delta_n)$ with
$$
\Delta_n : \xymatrix@C=0.8cm@R=0.5cm{
1\ar[r]_-{\alpha}&2\ar[r]_-{\beta}&3\ar[r]_-{\gamma}&4\ar[r]_-{\delta}&5\ar[r]_-{\sigma}&\cdots\ar[r]& n\ar@/_1.5pc/[llllll]^{\ }}.
$$
We present the following main results of this paper:
\begin{theorem*}\label{theorem-3}
Let $A$, $B$ be representation-finite algebras such that $A$ is not simply connected and $B\not\simeq \mathbf{A}_2$. Then, $A\otimes B$ is representation-finite if and only if one of the following holds.
\begin{enumerate}
\item[\rm{(1)}] $A\simeq N^\circ(n)$ for $n\ge 1$ and $B$ is isomorphic to one of  $\mathbf{B}_5$, $\mathbf{B}_5^{\op}$ and the path algebras of type $\mathbb{A}_3$, or $B$ is a Nakayama algebra having no $\mathbf{B}_3$ as a quotient algebra.

\item[\rm{(2)}] $B\simeq N(m)$ for $m\ge 3$ and the quiver of $A$ is $\Delta_n$ such that
\begin{itemize}
\item $A$ is isomorphic one of the algebras $k\Delta_2/\langle \beta\alpha \rangle$, $k\Delta_3/\langle \beta\gamma, \gamma\alpha \rangle$,  $k\Delta_4/\langle \beta\gamma, \delta\alpha \rangle$, $k\Delta_4/\langle \beta\gamma, \gamma\delta, \delta\alpha \rangle$, $k\Delta_5/\langle \beta\gamma, \gamma\delta, \sigma\alpha \rangle$, $k\Delta_5/\langle \beta\gamma, \gamma\delta, \delta\sigma, \sigma\alpha \rangle$.

\item $m=3$ and $A$ is isomorphic to $k\Delta_4/\langle \alpha\beta\gamma, \gamma\delta, \delta\alpha \rangle$ or $k\Delta_5/\langle \alpha\beta\gamma, \gamma\delta, \delta\sigma, \sigma\alpha \rangle$.

\item $m=3$, $|A|\ge 6$, $\rad^3 A=0$ and $A$ does not contain $\mathbf{B}_1$ or $\mathbf{B}_2$ as a quotient.

\item $m\ge 4$, $|A|\ge 6$ and $A$ does not contain $\mathbf{B}_3$ as a quotient algebra.
\end{itemize}
\end{enumerate}
\end{theorem*}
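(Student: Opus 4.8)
The plan is to prove both implications by combining the Galois covering technique with the two classification results quoted above and a finite amount of explicit case-checking. We may assume throughout that $B$ is non-simple, since $A\otimes k\simeq A$. As $A$ is representation-finite and not simply connected, it admits a nontrivial connected Galois covering $\pi\colon\widetilde A\to A$ with torsion-free group $G$, where $\widetilde A$ is the universal cover: a locally bounded, locally representation-finite, simply connected $k$-category (the finitely many non-standard representation-finite algebras $A$ are treated separately, by showing directly that $A\otimes B$ is then representation-infinite). Tensoring with $B$ gives a Galois covering $\widetilde A\otimes B\to A\otimes B$ with the same group $G$ acting trivially on the second factor, so by the covering criterion for representation type $A\otimes B$ is representation-finite if and only if $\widetilde A\otimes B$ is locally representation-finite and $G$ acts with finitely many orbits on the indecomposable $\widetilde A\otimes B$-modules; and $\widetilde A\otimes B$ is locally representation-finite if and only if $C\otimes B$ is representation-finite for every finite convex subcategory $C$ of $\widetilde A$.

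The first main step is to pin down the shape of $\widetilde A$. Applying the Miyamoto--Wang theorem to the convex subcategories $C$ of $\widetilde A$ with $|C|\ge 3$ (and the elementary facts on hereditary and Nakayama algebras of rank $\le 2$ otherwise; the hypothesis $B\not\simeq\mathbf A_2$ lets us bypass the $T_2$-theorem), the requirement that $C\otimes B$ be representation-finite for all such $C$ forces a dichotomy. Either every finite convex subcategory of $\widetilde A$ is isomorphic to some $N(\ell)$ --- which, as $\widetilde A$ is connected and $A$ is finite and not simply connected, means $\widetilde A$ is the radical-square-zero category on the two-sided infinite linear quiver, i.e. $A\simeq N^\circ(n)$ for some $n\ge1$; or else $B\simeq N(m)$ for some $m\ge3$. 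In the first alternative one reads off which $B$ make $N(\ell)\otimes B$ representation-finite for every $\ell$ (letting the rank grow and checking $|B|\le 2$ by hand), obtaining exactly the list in conclusion~(1). In the second alternative the only constraint left on $\widetilde A$ is that each of its finite convex subcategories satisfies the ``$B$-side'' hypotheses of the Miyamoto--Wang theorem relative to $N(m)$: no branch points, no double arrows, no second independent cycle, and --- for $m=3$ --- $\rad^3=0$ with no $\mathbf B_1$- or $\mathbf B_2$-quotient, resp. --- for $m\ge4$ --- no $\mathbf B_3$-quotient. In particular $\widetilde A$ is linear and uniformly oriented, so the quiver of $A$ must be the oriented $n$-cycle $\Delta_n$, and the admissible ideals are precisely those translating these conditions; this gives conclusion~(2) after one verifies by hand the short cycles $\Delta_2,\dots,\Delta_5$ (whose covers are too small for the $\mathbf B_j$-conditions to take effect) and the two exceptional algebras $k\Delta_4/\langle\alpha\beta\gamma,\gamma\delta,\delta\alpha\rangle$ and $k\Delta_5/\langle\alpha\beta\gamma,\gamma\delta,\delta\sigma,\sigma\alpha\rangle$ occurring for $m=3$.

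For the converse the same machine runs backwards: if $A$ and $B$ are as in (1) or (2), then $\widetilde A$ is either the radical-square-zero infinite line or a linear category all of whose finite convex subcategories are $N(\ell)$'s, resp. Nakayama algebras obeying the relevant $\mathbf B_j$-restrictions, so $C\otimes B$ is representation-finite for every finite convex $C\subseteq\widetilde A$ by the Miyamoto--Wang theorem and the finitely many small checks; hence $\widetilde A\otimes B$ is locally representation-finite, and the $G$-action on indecomposables has finitely many orbits because $\widetilde A\otimes B$ is periodic with the period of $\widetilde A$ (its indecomposables being interval-type modules, permuted by $G$). By the covering criterion, $A\otimes B$ is representation-finite.

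The difficulty is the bookkeeping rather than the covering formalism. First, ruling out every non-$\Delta_n$ possibility for $A$ --- branch points, chords, double arrows, two independent cycles, all triangular-but-not-simply-connected shapes, and the non-standard algebras --- amounts to exhibiting, inside some $C\otimes B$ with $C$ a finite convex subcategory of $\widetilde A$, a tame or wild algebra from the known lists of critical and minimal representation-infinite algebras, uniformly over the admissible $B$. Second, the exact list of relations on $\Delta_n$ in conclusion~(2), and in particular the short cycles $\Delta_2,\dots,\Delta_5$ and the $m=3$ exceptions, are precisely the places where the clean $\mathbf B_1,\mathbf B_2,\mathbf B_3$/radical-cube trichotomy of the Miyamoto--Wang theorem degenerates, and must be settled by direct computation of representation type. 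Finally, the two families (1) and (2) genuinely overlap --- for example when $A\simeq N^\circ(n)$ and $B\simeq N(m)$ --- so the final equivalence has to be phrased so as to account for this overlap without double-counting or omission.
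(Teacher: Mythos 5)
Your overall skeleton --- pass to a Galois cover of $A$, tensor with $B$, test finite convex subcategories against the Miyamoto--Wang theorem, and finish with small-case checks --- is essentially the paper's strategy (its Sections 4--6 do exactly this, organized by $|A|$ and the shape of $Q_A$). The first genuine gap is that your dichotomy step applies Miyamoto--Wang to $C\otimes B$ for convex $C\subseteq\widetilde A$, which is only legitimate when $B$ is simply connected with $|B|\ge 3$; your only escape hatch is ``rank $\le 2$ by hand''. This leaves untreated every representation-finite $B$ that is itself not simply connected, i.e.\ whose underlying graph contains a cycle: $B\simeq k[x]/(x^n)$, $B\simeq N^\circ(m)$ or any cyclic Nakayama algebra, the two-point algebras with loops, etc. For such $B$ the theorem asserts $A\otimes B$ is representation-infinite, and this is not a consequence of Miyamoto--Wang at all: it is precisely where the paper needs its reduction theorems (Lemma~\ref{lem::cycle}, Theorems~\ref{theo::no-cycle} and~\ref{theo::B-type-D}), proved by exhibiting zigzag subquivers of type $\widetilde{\mathbb{A}}_{2\lcm(m,n)-1}$ in separated quivers and tame concealed subcategories of types $\widetilde{\mathbb{D}}_4$, $\widetilde{\mathbb{E}}_6$, $\widetilde{\mathbb{E}}_7$. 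Nothing in your plan supplies these arguments, nor the $\mathbb{D}_4$-graph reduction that is needed before one may assume $Q_B$ is of type $\mathbb{A}$ in the first place.

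The second gap is the assertion that, in the branch $B\simeq N(m)$, the ``$B$-side'' conditions of Miyamoto--Wang force $\widetilde A$ to be linear and uniformly oriented, hence $Q_A=\Delta_n$. For $m\ge 4$ this is fine (the non-Nakayama algebras allowed there have at most four vertices, while $\widetilde A$ is infinite), but for $m=3$ the Miyamoto--Wang list contains arbitrarily large non-Nakayama algebras (those avoiding hereditary $\mathbb{A}_4$, $\mathbf{B}_6$, $\mathbf{B}_7$ quotients), so uniform orientation does not follow formally; excluding sinks and sources in $Q_A$ requires the separate arguments of Propositions~\ref{prop::N(n)-O(3)}--\ref{prop::N(n)-O(5)} (hereditary $\mathbf{A}_4^\epsilon$ or $\mathbf{B}_7$ quotients, an $\widetilde{\mathbb{E}}_6$-type separated quiver) together with the hypothesis that $A$ itself is representation-finite, which is what rules out alternating crowns (hereditary of type $\widetilde{\mathbb{A}}$); the ``$m=3$ exceptions'' you propose to check by hand are only the two oriented-cycle algebras, not this exclusion. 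Two smaller inaccuracies: the covering criterion used here does not require your extra ``finitely many $G$-orbits'' condition (and the paper's combinatorial universal cover sidesteps standardness altogether, whereas non-standard representation-finite algebras are not finite in number), and your local-representation-finiteness criterion via finite convex subcategories is asserted rather than proved --- though the paper is admittedly equally brief on that point.
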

\begin{proof}
See Theorem \ref{theo::local-A}, Theorem \ref{theo::NO(2)-A}, Proposition \ref{prop::NO(n)-A} and Theorem \ref{theo::NO(n)-N(n)}.
\end{proof}

This paper is organized as follows. In Section 2, we review some necessary definitions and introduce the Galois covering technique as required for our analysis. In Section 3, we present our main reduction theorems, which simplify the general problem to smaller cases. Sections 4, 5, and 6 are devoted to proving Theorem \ref{theorem-3} by dividing it into the cases where $|A|=1$, $|A|=2$, and $|A|\ge 3$, respectively.

\section{Preliminaries}
Let $A=kQ/I$ be a bound quiver algebra over an algebraically closed field $k$ with a finite connected quiver $Q=(Q_0, Q_1, s, t)$ and an admissible ideal $I$ of $kQ$. Then, $A$ is called a \emph{local} algebra if $|Q_0|=1$; otherwise, $A$ is called a \emph{non-local} algebra. A vertex $i\in Q_0$ is a \emph{source} (resp., \emph{sink}) if there is no arrow $\alpha\in Q_1$ with $t(\alpha)=i$ (resp., $s(\alpha)=i$). A path $\omega$ in $kQ$ of length $\ell\ge 1$ is called an \emph{orientated cycle} if $s(\omega)=t(\omega)$, and moreover, an orientated cycle $\omega$ is called a \emph{loop} if $\ell=1$. Then, $A$ is said to be \emph{triangular} if $Q$ admits no loops and oriented cycles. A relation $\rho=\sum_{i=1}^{m}\lambda_i\omega_i$ in $I$ with $0\neq \lambda_i\in k$ is a $k$-linear combination of paths $\omega_i$ of length at least 2 having the same source and target. If $m=2$, then $\omega_1-\omega_2$ is called a \emph{commutativity relation} in the sense that we always use $\rho=0$ to indicate $\rho\in I$. We refer to \cite{ASS} for more background on the representation theory of quivers.

We denote by $\rad A$ the Jacobson radical of $A$ and by $A^{\op}$ the opposite algebra of $A$. Then, $A$ is said to be \emph{radical square zero} if $\rad^2 A=0$. 

Let $A=kQ_A/I_A$ and $B=kQ_B/I_B$ be two bound quiver algebras. We define 
\begin{itemize}
\item $Q_A\otimes Q_B$: the quiver consisting of vertex set $(Q_A\otimes Q_B)_0:=(Q_A)_0\times (Q_B)_0$
and arrow set $(Q_A\otimes Q_B)_1:=((Q_A)_1\times (Q_B)_0)\cup ((Q_A)_0\times (Q_B)_1)$, together with the source map $s(-)$ and the target map $t(-)$ defined as
\begin{align*}
&s(\alpha\times j)=s_A(\alpha)\times j, \quad  s(i\times \beta)=i\times s_B(\beta), \\
&t(\alpha\times j)=t_A(\alpha)\times j, \quad t(i\times \beta)=i\times t_B(\beta),
\end{align*}
for any $(\alpha,j)\in (Q_A)_1\times (Q_B)_0$ and $(i,\beta)\in (Q_A)_0\times (Q_B)_1$.

\item $I_A\diamond I_B$: the two-sided ideal of $k(Q_{A}\otimes Q_B)$ generated by elements in $((Q_A)_0 \times I_B)\cup (I_A\times (Q_B)_0)$ and the elements being of the form $(i,\beta_{st})(\alpha_{ij},t)-(\alpha_{ij},s)(j,\beta_{st})$, where $\alpha_{ij}$ and $\beta_{st}$ run through all arrows $\alpha_{ij}:i\rightarrow j$ in $(Q_A)_1$ and $\beta_{st}:s\rightarrow t$ in $(Q_B)_1$.
\end{itemize}

Let $A\otimes B$ be the tensor product algebra of $A$ and $B$, that is, the $k$-algebra defined by the multiplication $(a_1\otimes b_1)\cdot(a_2\otimes b_2)=a_1a_2\otimes b_1 b_2$, for any $a_1, a_2\in A$ and $b_1, b_2\in B$. Then, we have $A\otimes B\simeq k(Q_A\otimes Q_B)/ (I_A\diamond I_B)$ as a bound quiver algebra (see \cite[Lemma 1.3]{L-rep-type}). We review some basic properties of tensor product algebras as follows.

\begin{proposition}[{\cite[Lemma 1.2]{L-rep-type}}]
Let $A$, $B$ and $C$ be bound quiver algebras. Then,
\begin{enumerate}
\item[\rm{(1)}] $A\otimes B \simeq B\otimes A$.
\item[\rm{(2)}] $(A\otimes B)^{\op} \simeq A^{\op} \otimes B^{\op}$.
\item[\rm{(3)}] $A\otimes (B \otimes C)\simeq (A\otimes B)\otimes C$.
\item[\rm{(4)}] $A\otimes B$ is connected if and only if both $A$ and $B$ are connected.
\end{enumerate}
\end{proposition}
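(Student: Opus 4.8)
The plan is to verify each of the four isomorphisms directly from the explicit bound-quiver description $A\otimes B\simeq k(Q_A\otimes Q_B)/(I_A\diamond I_B)$ recalled just before the statement, or equivalently from the algebra-level definition $(a_1\otimes b_1)\cdot(a_2\otimes b_2)=a_1a_2\otimes b_1b_2$. Since all four assertions are standard facts about tensor products of $k$-algebras, the whole argument is routine; the only mild care is to exhibit explicit $k$-algebra homomorphisms and check they respect multiplication, whereupon bijectivity is immediate.

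For (1), I would define $\varphi\colon A\otimes B\to B\otimes A$ on generators by $\varphi(a\otimes b)=b\otimes a$ and extend $k$-linearly. This is a well-defined $k$-linear bijection (its inverse has the same form), and it is multiplicative because $\varphi((a_1\otimes b_1)(a_2\otimes b_2))=\varphi(a_1a_2\otimes b_1b_2)=b_1b_2\otimes a_1a_2=(b_1\otimes a_1)(b_2\otimes a_2)$. For (3), the associativity isomorphism $A\otimes(B\otimes C)\to(A\otimes B)\otimes C$ sending $a\otimes(b\otimes c)\mapsto (a\otimes b)\otimes c$ is likewise a $k$-linear bijection, and multiplicativity follows by the same elementwise check using the componentwise product in each factor. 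Both of these are formal consequences of the universal property of the tensor product over $k$, so no relations need to be tracked individually.

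For (2), I would check that the identity on underlying vector spaces gives the desired isomorphism once the opposite multiplications are taken into account. Writing $\ast$ for the product in the opposite algebra, one has in $(A\otimes B)^{\op}$ the product $(a_1\otimes b_1)\ast(a_2\otimes b_2)=(a_2\otimes b_2)(a_1\otimes b_1)=a_2a_1\otimes b_2b_1$, while in $A^{\op}\otimes B^{\op}$ one has $(a_1\otimes b_1)(a_2\otimes b_2)=(a_1\ast_A a_2)\otimes(b_1\ast_B b_2)=a_2a_1\otimes b_2b_1$; these agree, so the identity map is an algebra isomorphism. For (4), I would use that a finite-dimensional bound-quiver algebra is connected precisely when its quiver is connected, and observe from the construction of $Q_A\otimes Q_B$ that this quiver is connected if and only if both $Q_A$ and $Q_B$ are: the vertex set is the product $(Q_A)_0\times(Q_B)_0$ and the arrows are the ``horizontal'' arrows $\alpha\times j$ together with the ``vertical'' arrows $i\times\beta$, so any two vertices can be joined by an undirected walk exactly when both factor quivers admit such walks between the respective coordinates.

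None of the four parts presents a genuine obstacle; the statement is a collection of foundational identities cited from \cite[Lemma 1.2]{L-rep-type}, and the ``hardest'' point is merely the bookkeeping in (4), where one must pass between the algebra and its quiver and argue connectedness of the product quiver in both directions. I would therefore keep the proof brief, noting that (1)--(3) are immediate from the elementwise multiplication rule and the universal property of $\otimes_k$, and giving the short graph-theoretic argument for (4).
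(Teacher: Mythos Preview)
Your proposal is correct and complete; each of the four verifications is valid as written. Note, however, that the paper does not supply its own proof of this proposition: it is stated with a citation to \cite[Lemma 1.2]{L-rep-type} and used as a background fact, so there is no argument in the paper to compare against.
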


\subsection{Galois covering}\label{subsection-covering}
We reformulate the constructions in \cite{N-covering-monomial} as follows.

Let $Q=(Q_0, Q_1)$ be a finite connected quiver. We denote by $Q(a,b)$ the set of arrows in $Q$ starting from $a$ and ending at $b$. A \emph{covering} of $Q$ is a connected quiver $\widetilde{Q}$ together with a map $\mathcal{F}: \widetilde{Q}\rightarrow Q$ mapping vertices $\widetilde{a},\widetilde{b},\ldots$ to vertices $a, b, \ldots$, arrows $\widetilde{\alpha}, \widetilde{\beta},\ldots$ to arrows $\alpha, \beta, \ldots$, and paths to paths (i.e., $\alpha\beta$ is a path in $Q$ if $\widetilde{\alpha}\widetilde{\beta}$ is a path in $\widetilde{Q}$), such that 
\begin{itemize}
\item the restriction $\mathcal{F}|_0: \widetilde{Q}_0\rightarrow Q_0$ is a surjection,

\item $\mathcal{F}$ induces a bijection 
$$
\bigcup_{\mathcal{F}(\widetilde{c})=b} \widetilde{Q}(\widetilde{a}, \widetilde{c})\longrightarrow Q(a, b)
$$
for any vertex $\widetilde{a}$ of $\widetilde{Q}$ and any vertex $b$ of $Q$,

\item $\mathcal{F}$ induces a bijection 
$$
\bigcup_{\mathcal{F}(\widetilde{c})=a} \widetilde{Q}(\widetilde{c}, \widetilde{b})\longrightarrow Q(a, b)
$$
for any vertex $\widetilde{b}$ of $\widetilde{Q}$ and any vertex $a$ of $Q$.
\end{itemize}
Then, the map $\mathcal{F}: \widetilde{Q}\rightarrow Q$ naturally gives a $k$-linear homomorphism from $k\widetilde{Q}$ to $kQ$, which is also denoted by $\mathcal{F}$.

Let $A=kQ/I$ be a bound quiver algebra where $I$ is generated by a set $\Omega$ of relations in $kQ$. We denote by $\widetilde{\Omega}$ the set of all relations $\widetilde{\omega}$ in $k\widetilde{Q}$ satisfying $\mathcal{F}(\widetilde{\omega})\in \Omega$. Let $\widetilde{I}$ be the two-sided ideal of $k\widetilde{Q}$ generated by $\widetilde{\Omega}$, and set $\widetilde{A}:=k\widetilde{Q}/\widetilde{I}$. Then, the map $\mathcal{F}: \widetilde{Q}\rightarrow Q$ induces the \emph{universal Galois covering} of $A$, i.e.,
$$
\mathcal{G}: \widetilde{A}\longrightarrow \widetilde{A}/G=A,
$$
where $G=\Pi_1(Q, I)$ is the fundamental group of $A$, see \cite[(1.8)]{S06} for more details. 

Note that the algebra $\widetilde{A}$ may be infinite-dimensional. For example, the universal Galois covering $\widetilde{B}$ of the local algebra $B=k[x]/(x^n)$ for $n\ge 2$ is given by an infinite linear quiver 
$$
\xymatrix@C=0.7cm@R=0.5cm{
\cdots \ar[r]^-x&\circ\ar[r]^-x&\circ \ar[r]^-x&\circ \ar[r]^-x&\cdots}
$$
bounded by $x^n=0$, and the fundamental group of $B$ is $\mathbb{Z}$. Let $\widetilde{e}_a$ be the idempotent of $\widetilde{A}$ corresponding to the vertex $\widetilde{a}$ in $\widetilde{Q}$. We say that $\widetilde{A}$ is \emph{locally representation-finite} if there are only finitely many indecomposable $\widetilde{A}$-modules $M$ satisfying $M\widetilde{e}_a\neq 0$ for each vertex $\widetilde{a}\in \widetilde{Q}$, up to isomorphism.

The most critical property of Galois coverings is as follows. This property is fully used in Section 4 and Section 5.

\begin{theorem}[{\cite[Lemma 3.3 and Theorem 3.6]{G-universal-cover}, see also \cite{MP-universal-cover}}]\label{theo::covering}
Let $A$ be a bound quiver algebra and $\widetilde{A}$ its universal Galois covering. Then, $A$ is representation-finite if and only if $\widetilde{A}$ is locally representation-finite.
\end{theorem}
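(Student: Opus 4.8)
The plan is to prove both implications through the two standard functors attached to the Galois covering $\mathcal{G}\colon\widetilde{A}\to\widetilde{A}/G=A$: the \emph{push-down} functor $\mathcal{G}_\lambda\colon\mathrm{mod}\,\widetilde{A}\to\mathrm{mod}\,A$ and the \emph{restriction} (pull-up) functor $\mathcal{G}_\bullet\colon\mathrm{mod}\,A\to\mathrm{mod}\,\widetilde{A}$ obtained by composing a representation with $\mathcal{G}$. First I would record the three formal facts that make the covering machinery run: $\mathcal{G}_\lambda$ is left adjoint to $\mathcal{G}_\bullet$; for every finitely supported $\widetilde{A}$-module $\widetilde{M}$ one has $\mathcal{G}_\bullet\mathcal{G}_\lambda\widetilde{M}\cong\bigoplus_{g\in G}{}^{g}\widetilde{M}$, where ${}^{g}(-)$ denotes the twist by the free $G$-action on $\widetilde{A}$; and $\mathcal{G}_\lambda({}^{g}\widetilde{M})\cong\mathcal{G}_\lambda(\widetilde{M})$ for all $g\in G$. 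These reduce the theorem to a finiteness count carried out separately for each direction.

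For the implication ``$\widetilde{A}$ locally representation-finite $\Rightarrow$ $A$ representation-finite'', the crux is the lemma that, under local representation-finiteness (equivalently, local support-finiteness for a locally bounded category), the push-down $\mathcal{G}_\lambda$ sends indecomposables to indecomposables and is dense, i.e. every indecomposable $A$-module is isomorphic to $\mathcal{G}_\lambda\widetilde{M}$ for some indecomposable finitely supported $\widetilde{M}$. Granting this, the isomorphism $\mathcal{G}_\lambda({}^{g}\widetilde{M})\cong\mathcal{G}_\lambda\widetilde{M}$ shows that the iso-classes of indecomposable $A$-modules are parametrised by $G$-orbits of iso-classes of indecomposable $\widetilde{A}$-modules. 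Since $\widetilde{Q}$ is connected with a free $G$-action whose orbit space is the finite quiver $Q$, and since local representation-finiteness bounds the indecomposables supported at any fixed vertex, I would argue that there are only finitely many such orbits, hence finitely many indecomposable $A$-modules.

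For the converse ``$A$ representation-finite $\Rightarrow$ $\widetilde{A}$ locally representation-finite'', fix a vertex $\widetilde{a}\in\widetilde{Q}$ and let $\widetilde{M}$ be any indecomposable (finite-dimensional) $\widetilde{A}$-module with $\widetilde{M}\widetilde{e}_a\neq 0$. Then $\mathcal{G}_\lambda\widetilde{M}$ is a finite-dimensional $A$-module, hence isomorphic to a finite direct sum of members of the finite list $N_1,\dots,N_r$ of indecomposable $A$-modules. Applying $\mathcal{G}_\bullet$ and using $\mathcal{G}_\bullet\mathcal{G}_\lambda\widetilde{M}\cong\bigoplus_{g\in G}{}^{g}\widetilde{M}$, I get that $\widetilde{M}$ (the summand indexed by $g=1$) is a direct summand of $\bigoplus_{i}\mathcal{G}_\bullet(N_i)$. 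Each restriction $\mathcal{G}_\bullet(N_i)$ is one fixed $G$-periodic $\widetilde{A}$-module, so by the Krull--Schmidt property its indecomposable summands form finitely many $G$-orbits, of which only finitely many translates can have support meeting $\widetilde{a}$. Collecting over $i=1,\dots,r$ bounds the number of iso-classes of indecomposable $\widetilde{A}$-modules supported at $\widetilde{a}$, which is precisely local representation-finiteness.

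The hard part will be the first implication, specifically verifying that $\mathcal{G}_\lambda$ preserves indecomposability and is dense. This requires analysing $\End_A(\mathcal{G}_\lambda\widetilde{M})$ through the covering---showing the relevant $\mathrm{Hom}$-spaces between $\widetilde{M}$ and its $G$-translates are finite-dimensional and that no nontrivial ``gluing'' of distinct orbits occurs---and it is exactly here that local support-finiteness (forced by representation-finiteness of $\widetilde{A}$) is indispensable, since without it the infinite sum $\bigoplus_{g}{}^{g}\widetilde{M}$ need not be locally bounded and density can fail. I would handle this following Gabriel and Mart\'inez-Villa and de la Pe\~na, reducing to the observation that a finitely supported indecomposable has trivial stabiliser in $G$ (as $G$ acts freely on the infinitely many vertices of $\widetilde{Q}$), so that distinct orbits yield pairwise non-isomorphic indecomposable push-downs.
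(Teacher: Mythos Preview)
The paper does not give its own proof of this theorem; it is stated with attribution to Gabriel \cite[Lemma 3.3 and Theorem 3.6]{G-universal-cover} and Mart\'{i}nez-Villa--de la Pe\~na \cite{MP-universal-cover} and then used as a black box (via Corollary~\ref{cor::covering}) throughout Sections~4--6. So there is nothing in the paper to compare your argument against beyond the bare citation.

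Your outline is essentially the standard covering-theoretic argument from those references: the push-down/pull-up adjunction, the decomposition $\mathcal{G}_\bullet\mathcal{G}_\lambda\widetilde{M}\cong\bigoplus_{g\in G}{}^{g}\widetilde{M}$, and Gabriel's density theorem for $\mathcal{G}_\lambda$ under local support-finiteness. Two small points worth tightening. First, in your final paragraph the claim that a finitely supported indecomposable has trivial $G$-stabiliser does not follow from freeness of the $G$-action on vertices alone; a nontrivial element could in principle permute a finite support set. What makes it work here is that the fundamental group $G=\Pi_1(Q,I)$ is a \emph{free} group, hence torsion-free, so any nontrivial $g$ generates an infinite orbit on each vertex and cannot stabilise a finite support. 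Second, in the converse direction your appeal to Krull--Schmidt for the infinite module $\mathcal{G}_\bullet(N_i)$ needs the observation that it is locally finite-dimensional (since $N_i$ is and $\mathcal{G}$ has finite fibres vertexwise), so that the decomposition into finitely many $G$-orbits of indecomposable summands is legitimate. With those caveats your sketch is sound and matches the cited sources.
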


We may apply Theorem \ref{theo::covering} to tensor product algebras. Let $\mathcal{G}^A: \widetilde{A}\rightarrow \widetilde{A}/G_A=A$ be the universal Galois covering of $A$ with group $G_A$ and  $\mathcal{G}^B: \widetilde{B}\rightarrow \widetilde{B}/G_B=B$ the universal Galois covering of $B$ with group $G_B$. According to \cite[Lemma 1.7]{L-rep-type},
$$
\mathcal{G}^A\otimes \mathcal{G}^B: \widetilde{A}\otimes \widetilde{B} \rightarrow \widetilde{A}\otimes \widetilde{B}/(G_A\times G_B)=A\otimes B
$$
is exactly the universal Galois covering of $A\otimes B$ with group $G_A\times G_B$.

\begin{corollary}\label{cor::covering}
Let $\widetilde{A}\otimes \widetilde{B}$ be the universal Galois covering of $A\otimes B$. Then, $A\otimes B$ is representation-finite if and only if $\widetilde{A}\otimes \widetilde{B}$ is locally representation-infinite.
\end{corollary}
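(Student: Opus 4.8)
The plan is to read the corollary as the contrapositive of Theorem~\ref{theo::covering} specialized to the single bound quiver algebra $C:=A\otimes B$. (As printed the left-hand side reads ``representation-finite''; for the biconditional to hold this must be ``representation-infinite'', since Theorem~\ref{theo::covering} already gives ``representation-finite'' $\Leftrightarrow$ ``locally representation-finite''. I retain the right-hand side ``locally representation-infinite'' as stated, which is exactly the form one uses later to \emph{certify} representation-infiniteness of $A\otimes B$ by exhibiting locally representation-infinite behaviour in the cover.) Thus the target to establish is: $A\otimes B$ is representation-infinite if and only if $\widetilde{A}\otimes\widetilde{B}$ is locally representation-infinite, where ``locally representation-infinite'' means ``not locally representation-finite'', i.e.\ there is a vertex $\widetilde{x}$ of the quiver of $\widetilde{A}\otimes\widetilde{B}$ admitting infinitely many pairwise non-isomorphic indecomposables $M$ with $M\widetilde{e}_x\neq 0$.

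The proof I would give has three short steps. First, I invoke the identification established immediately above the corollary: by \cite[Lemma~1.7]{L-rep-type} the map $\mathcal{G}^A\otimes\mathcal{G}^B\colon \widetilde{A}\otimes\widetilde{B}\to(\widetilde{A}\otimes\widetilde{B})/(G_A\times G_B)=A\otimes B$ is the \emph{universal} Galois covering of $A\otimes B$, so $\widetilde{A}\otimes\widetilde{B}$ is precisely the algebra $\widetilde{C}$ to which Theorem~\ref{theo::covering} applies. Second, I apply Theorem~\ref{theo::covering} to $C=A\otimes B$: this yields that $A\otimes B$ is representation-finite if and only if $\widetilde{A}\otimes\widetilde{B}$ is locally representation-finite. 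Third, I negate both sides of this biconditional. Since every finite-dimensional algebra is either representation-finite or representation-infinite, the negation of the left side is ``$A\otimes B$ is representation-infinite'', while the negation of ``locally representation-finite'' is by definition ``locally representation-infinite''. This gives exactly the (corrected) statement.

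The step carrying the real content is the first one, and it is imported wholesale from \cite[Lemma~1.7]{L-rep-type}: one must know not merely that $\widetilde{A}\otimes\widetilde{B}\to A\otimes B$ is \emph{a} Galois covering, but that it is the universal one, with fundamental group exactly $G_A\times G_B$. Only for the universal covering is Theorem~\ref{theo::covering} available, so without this identification the corollary would not follow. Granting it, there is no genuine obstacle: the second and third steps are a direct quotation of Theorem~\ref{theo::covering} followed by a purely formal passage to the contrapositive via the representation-finite/infinite dichotomy. Hence the expected difficulty of the corollary is essentially nil beyond correctly citing the two results on which it rests.
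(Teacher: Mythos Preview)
Your proposal is correct and matches the paper's (implicit) argument: the corollary is stated without proof because it is immediate from Theorem~\ref{theo::covering} once one knows, via \cite[Lemma~1.7]{L-rep-type} as recorded just above, that $\widetilde{A}\otimes\widetilde{B}$ is the universal Galois covering of $A\otimes B$. The only remark is that the typo is more naturally repaired on the right-hand side (``locally representation-\emph{finite}''), consistent with how the corollary is actually invoked throughout Sections~4--6; your contrapositive reading is of course equivalent.
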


A triangular algebra $A$ is said to be \emph{simply connected} (see \cite[Definition 1.2]{AS}) if the fundamental group $\Pi_1(Q,I)$ is trivial for any bound quiver presentation $kQ/I$ of $A$. All tree algebras are simply connected. It is given in \cite[Lemma 1.7]{L-rep-type} that $A\otimes B$ is simply connected if and only if both $A$ and $B$ are simply connected.

A representation-finite algebra $A$ admits a \emph{standard form}
$\overline{A}$ such that $A$ and $\overline{A}$ share the same Auslander-Reiten quiver (see \cite[Corollary 5.2]{BG-covering}), and $\overline{A}$ admits a simply connected universal Galois covering
$$
\mathcal{G}: \widetilde{\overline{A}}\longrightarrow \widetilde{\overline{A}}/G=\overline{A},
$$
where $G=\Pi_1(Q_A, I_A)$ is the fundamental group of $A$, see \cite[Section 3]{BG-standard form}. If moreover, $A$ is a simply connected representation-finite algebra, then $A$ is standard (\cite{BG-covering}) and hence, 
$$
\widetilde{\overline{A}}=\overline{A}=A.
$$
We refer to \cite{DS-covering} for a study on the covering theory of representation-infinite algebras.

\subsection{Some algebras}
Let $\mathbf{A}_n^\epsilon$ ($n\geq 2$) be the path algebra of $\mathbb{A}_n$ associated with orientation $\epsilon$, where $\mathbb{A}_n$ is the Dynkin diagram of the form:
$$
\xymatrix@C=0.7cm@R=0.5cm{
1\ar@{-}[r]&2\ar@{-}[r]&3\ar@{-}[r]&4\ar@{-}[r]&\cdots\ar@{-}[r]& n-1\ar@{-}[r]& n},
$$
and the orientation $\epsilon:=(\epsilon_1,\epsilon_2,\ldots,\epsilon_{n-1})$ is defined by $\epsilon_i=+$ if $i\rightarrow i+1$, $\epsilon_i=-$ if $i+1\rightarrow i$. In particular, we denote $\mathbf{A}_2^{(+)}\simeq \mathbf{A}_2^{(-)}$ by $\mathbf{A}_2$ for short.

Recall that a module $M$ is said to be \emph{uniserial} if it has a unique composition series. Then, an algebra $A$ is said to be \emph{Nakayama} if all indecomposable projective $A$-modules and all indecomposable injective $A$-modules are uniserial. We review a simple characterization of Nakayama algebras as follows.
\begin{proposition}[{\cite[V. Theorem 3.2]{ASS}}]\label{prop::Nakayama}
A bound quiver algebra $A=kQ/I$ is Nakayama if and only if $Q$ is one of the following quivers:
$$
\overset{\rightarrow}{\mathbb{A}}_n : \xymatrix@C=0.7cm@R=0.5cm{
1\ar[r]&2\ar[r]&3\ar[r]&4\ar[r]&\cdots\ar[r]& n-1\ar[r]& n},
$$
$$
\Delta_n : \xymatrix@C=0.7cm@R=0.5cm{
1\ar[r]&2\ar[r]&3\ar[r]&4\ar[r]&\cdots\ar[r]& n-1\ar[r]& n\ar@/_1.5pc/[llllll]^{\ }},
$$
or equivalently, if and only if $Q$ does not contain one of $\xymatrix@C=0.6cm{\circ&\circ\ar[l]\ar[r]&\circ}$ and $\xymatrix@C=0.6cm{\circ\ar[r]&\circ &\circ\ar[l]}$ as a subquiver.
\end{proposition}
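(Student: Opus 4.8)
The plan is to read off the structure of the indecomposable projectives and injectives from the local arrow data at each vertex, exploiting the admissibility of $I$. The crucial algebraic input is that since $I\subseteq \rad^2 kQ$, the cosets of the arrows with source $a$ form a basis of $\rad P(a)/\rad^2 P(a)$, where $P(a)=e_a A$ is the indecomposable projective at $a$; in particular $\dim_k \rad P(a)/\rad^2 P(a)$ equals the out-degree of $a$. Since a module is uniserial exactly when every layer of its radical series is simple (equivalently at most one-dimensional), the uniseriality of \emph{all} $P(a)$ is equivalent to \emph{every} vertex of $Q$ having out-degree at most one: the forward direction only needs the top layer, while the converse uses that the out-degree bound at all vertices forces, from any fixed $a$, a uniquely determined walk $a\to a_1\to a_2\to\cdots$ and hence at most one path of each length with source $a$, so every radical layer of $P(a)$ is at most one-dimensional. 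Dually, passing to $A^{\op}=kQ^{\op}/I^{\op}$ and using the standard duality $D=\operatorname{Hom}_k(-,k)$, which carries the injective $I(a)$ over $A$ to the projective at $a$ over $A^{\op}$ and preserves uniseriality, the uniseriality of all injectives is equivalent to every vertex of $Q$ having in-degree at most one.

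First I would prove the forward implication together with the equivalence of the two formulations. If $A$ is Nakayama, the dictionary above shows every vertex of $Q$ has out-degree and in-degree at most one. A vertex of out-degree two is precisely the local configuration $\circ\leftarrow\circ\rightarrow\circ$, and a vertex of in-degree two is precisely $\circ\rightarrow\circ\leftarrow\circ$, so the degree condition is exactly the exclusion of the two forbidden subquivers. I would then give the combinatorial classification: in a finite connected quiver with in- and out-degree both bounded by one, following the forced outgoing walk from any vertex either reaches a sink or first repeats a vertex; the in-degree bound prevents a repetition from occurring at an interior vertex, so a repetition can only close the walk into a full oriented cycle. Connectedness then forces $Q$ to be either a single oriented path $\overset{\rightarrow}{\mathbb{A}}_n$ or a single oriented cycle $\Delta_n$, which establishes the equivalence between the listed quivers and the degree condition.

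For the converse I would verify directly that for $Q\in\{\overset{\rightarrow}{\mathbb{A}}_n,\ \Delta_n\}$ every projective and every injective is uniserial. In either quiver all out-degrees are at most one, so the forced-walk count of the first paragraph applies verbatim: from each vertex $a$ there is at most one path of each length, each nonzero path-coset spans its layer $\rad^i P(a)/\rad^{i+1}P(a)$, every such layer is at most one-dimensional, and $P(a)$ is uniserial. Since $Q^{\op}$ is again a path or a cycle, the same argument for $A^{\op}$ combined with the duality yields uniseriality of every $I(a)$, so $A$ is Nakayama.

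The main point requiring care is the oriented-cycle case $\Delta_n$, where $kQ$ is infinite-dimensional: here I must invoke admissibility, $\rad^N kQ\subseteq I$ for some $N$, to guarantee that $P(a)$ is finite-dimensional and that its radical filtration terminates. Granting this, the "at most one path of each length" bound still holds and uniseriality follows as before. I expect this bookkeeping around the bounding ideal in the cyclic case, together with the degree-counting classification, to be the only genuinely delicate steps; the remainder is the standard correspondence between arrows and radical layers.
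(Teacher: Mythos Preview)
The paper does not supply a proof of this proposition; it is simply quoted from \cite[V.~Theorem~3.2]{ASS}. Your sketch is essentially the standard argument found there: identify $\rad P(a)/\rad^2 P(a)$ with the span of arrows leaving $a$ (using $I\subseteq\rad^2 kQ$), so that uniseriality of all projectives is equivalent to every vertex having out-degree at most one; dualise for injectives and in-degree; then classify finite connected quivers with all in- and out-degrees bounded by one as $\overset{\rightarrow}{\mathbb{A}}_n$ or $\Delta_n$. Your handling of the cyclic case via admissibility of $I$ is the right bookkeeping.

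One small imprecision: you write that ``a vertex of out-degree two is precisely the local configuration $\circ\leftarrow\circ\rightarrow\circ$''. Taken literally with three distinct vertices, this misses double arrows $a\rightrightarrows b$ and loops, which also give out-degree $\ge 2$ and destroy uniseriality. The forbidden-subquiver formulation in the statement has the same ambiguity, so this is really a wrinkle in how the equivalence is phrased rather than a gap in your reasoning; either read ``subquiver'' as allowing repeated vertices, or note separately that multiple arrows and loops already force a two-dimensional top radical layer.
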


\begin{definition}
The Nakayama algebras with radical square zero are given as
$$
N(n):=k\overset{\rightarrow}{\mathbb{A}}_n/(\rad^2 k\overset{\rightarrow}{\mathbb{A}}_n)
\quad \text{and} \quad
N^\circ(n):=k\Delta_n/(\rad^2 k\Delta_n).
$$
\end{definition}

Note that $N(2)\simeq \mathbf{A}_2$ and $N^\circ(1)\simeq k[x]/(x^2)$. One observation is that $A$ with $|A|\ge 3$ is not a Nakayama algebra with radical square zero if and only if $A$ contains $\mathbf{A}_3^\epsilon$ for some $\epsilon$ as a quotient algebra. Another observation is that all $N^\circ(n)$'s share the same universal Galois covering given by
$$
\xymatrix@C=0.7cm@R=0.5cm{
\cdots \ar[r]^-x&\circ\ar[r]^-x&\circ \ar[r]^-x&\circ \ar[r]^-x&\cdots}
$$
with $x^2=0$; this coincides with the universal Galois covering of $k[x]/(x^2)$.

Let $A:=kQ$ be a hereditary algebra. Recall from \cite{HR-tilted} that an $A$-module $T$ is said to be a \emph{tilting module} if $|T|=|A|$, $\text{Ext}_A^1(T,T)=0$ and the projective dimension of $T$ is at most one. If $T$ is a tilting $A$-module, we call the endomorphism algebra $B:=\End_A T$ a \emph{tilted algebra} of type $Q$. If moreover, $T$ is contained in a preprojective component of the Auslander-Reiten quiver of $A$, we call $B$ a \emph{concealed algebra} of type $Q$. 

Tame concealed algebras are precisely the concealed algebras of type $\widetilde{\mathbb{A}}_n$, $\widetilde{\mathbb{D}}_n (n\geqslant 4)$, $\widetilde{\mathbb{E}}_6$, $\widetilde{\mathbb{E}}_7$ and $\widetilde{\mathbb{E}}_8$. It is shown in \cite[(1.2)]{AS} that tame concealed algebras are simply connected. It is also worth mentioning that tame concealed algebras have been classified completely by quiver and relations in \cite{HV-tame-concealed} (see also \cite{B-critical}).

\section{Reduction theorems}
In this section, we review some fundamental reduction methods, and explain our main reduction strategy for later use. First of all, the following proposition gives us the reason why it suffices to consider the representation-finiteness of $A\otimes B$.
\begin{proposition}[{\cite[Proposition 1.8]{L-rep-type}}]\label{prop::three-tensor}
If $A$, $B$ and $C$ are three non-simple algebras, then $A\otimes B\otimes C$ is representation-infinite. Moreover, $A\otimes B\otimes C$ is tame if and only if $A\simeq B\simeq C\simeq \mathbf{A}_2$.
\end{proposition}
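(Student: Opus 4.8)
The plan is a two-step reduction followed by representation-type computations on a short list of small algebras. The first step records the soft principle that a surjective $k$-algebra homomorphism $\Lambda\twoheadrightarrow\Lambda'$ makes $\operatorname{mod}\Lambda'$ a full subcategory of $\operatorname{mod}\Lambda$ by restriction of scalars, preserving indecomposability; hence representation-finiteness and non-wildness pass from $\Lambda$ to $\Lambda'$, while representation-infiniteness and wildness pass the other way. This applies in particular to the canonical surjection $\Lambda\twoheadrightarrow\Lambda/\rad^{2}\Lambda$, to the surjection onto the convex subcategory $e_{S}\Lambda e_{S}$ obtained by killing the vertex idempotents outside a convex set $S$ of vertices, and to tensor products, since $f\otimes g$ is surjective whenever $f$ and $g$ are. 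The second step is that every non-simple bound quiver algebra $\Lambda=kQ/I$ surjects onto $\mathbf{A}_{2}$ or onto $L:=k[x]/(x^{2})$: as $\rad\Lambda\neq 0$ there is an arrow $\alpha$ in $Q$, and factoring out all other arrows (and $\alpha^{2}$ as well if $\alpha$ is a loop) and then projecting onto the connected component of $\alpha$ produces such a map. Combining the two steps, any $A\otimes B\otimes C$ with $A,B,C$ non-simple surjects onto some $R\otimes S\otimes T$ with $R,S,T\in\{\mathbf{A}_{2},L\}$, so for the first assertion it suffices to prove that $\mathbf{A}_{2}^{\otimes 3}$, $\mathbf{A}_{2}^{\otimes 2}\otimes L$, $\mathbf{A}_{2}\otimes L^{\otimes 2}$ and $L^{\otimes 3}$ are representation-infinite.

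I would handle each of these four algebras by passing to its radical square zero quotient and reading off the separated quiver, whose connected components decide the representation type (see \cite{ASS}). For $\mathbf{A}_{2}^{\otimes 2}\otimes L$, $\mathbf{A}_{2}\otimes L^{\otimes 2}$ and $L^{\otimes 3}$, the separated quiver of the radical square zero quotient has a connected component containing a cycle through a vertex of valency at least three (for $L^{\otimes 3}=k[x,y,z]/(x^{2},y^{2},z^{2})$ the quotient is the local algebra on three loops and that component is a pair of vertices joined by three arrows); since no Dynkin graph contains a cycle and no Euclidean graph contains a cycle through a vertex of valency at least three (the cyclic ones, $\widetilde{\mathbb{A}}_{n}$, being $2$-regular), all three algebras are wild, in particular representation-infinite. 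For the commutative $3$-cube $\mathbf{A}_{2}^{\otimes 3}$, the separated quiver of the radical square zero quotient is a disjoint union of two copies of $\mathbb{D}_{4}$, one copy of $\widetilde{\mathbb{A}}_{5}$ and two isolated vertices; the Euclidean component $\widetilde{\mathbb{A}}_{5}$ alone forces representation-infiniteness. This proves the first assertion and, along the way, the wildness of $\mathbf{A}_{2}^{\otimes 2}\otimes L$, $\mathbf{A}_{2}\otimes L^{\otimes 2}$ and $L^{\otimes 3}$; that $\mathbf{A}_{2}^{\otimes 3}$ itself is tame is part of the Leszczy\'nski--Skowro\'nski classification of non-wild tensor products \cite{LS-tame-tensor-product}, which gives the ``if'' direction of the second assertion.

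For the ``only if'' direction, assume $A\otimes B\otimes C$ is tame, hence not wild. Since no quotient of it is wild and $\mathbf{A}_{2}^{\otimes 2}\otimes L$ is wild, none of $A,B,C$ can surject onto $L$; equivalently, each is loop-free. It remains to exclude a loop-free non-simple algebra $A$ with $A\not\simeq\mathbf{A}_{2}$. Among the quotients and convex subcategories of such an $A$ --- all non-wild by the above --- one finds one of a short list of two- or three-vertex algebras $R$: a double arrow gives the Kronecker algebra $k(\circ\rightrightarrows\circ)$, an oriented $2$-cycle gives $N^{\circ}(2)$, and when $|A|\ge 3$ a convex connected $3$-vertex subquiver gives one of $\overset{\rightarrow}{\mathbb{A}}_{3}$, $N^{\circ}(3)$, $k(\circ\to\circ\leftarrow\circ)$, $k(\circ\leftarrow\circ\to\circ)$ or a $3$-cycle algebra (the first and the last then surjecting onto $N(3)$ and $N^{\circ}(3)$ respectively). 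Running the radical square zero computation once more shows that $\mathbf{A}_{2}\otimes\mathbf{A}_{2}\otimes R$ is wild for every such $R$, contradicting tameness; therefore $A\simeq B\simeq C\simeq\mathbf{A}_{2}$.

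The reduction and the separated quiver bookkeeping are routine; the single genuinely nontrivial input is that the commutative $3$-cube $\mathbf{A}_{2}^{\otimes 3}$ is tame and not wild. This is invisible to its radical square zero shadow --- that shadow only certifies representation-infiniteness --- and it relies on the structural classification of non-wild tensor products in \cite{LS-tame-tensor-product} (equivalently, on a direct covering-theoretic analysis of the cube).
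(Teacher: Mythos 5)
The paper does not prove this proposition at all: it is imported wholesale as \cite[Proposition 1.8]{L-rep-type}, so there is no internal argument to compare yours against. Your reconstruction is correct and is essentially the standard covering\slash quotient argument one would expect behind the citation. The reduction to $R\otimes S\otimes T$ with $R,S,T\in\{\mathbf{A}_2,k[x]/(x^2)\}$ is sound (projection onto the component of a single surviving arrow is a genuine algebra surjection), and I verified your separated-quiver computations: the cube does give $\mathbb{D}_4\sqcup\mathbb{D}_4\sqcup\widetilde{\mathbb{A}}_5\sqcup\{\ast\}\sqcup\{\ast\}$, and each of $\mathbf{A}_2^{\otimes 2}\otimes L$, $\mathbf{A}_2\otimes L^{\otimes 2}$, $L^{\otimes 3}$, as well as $\mathbf{A}_2^{\otimes 2}\otimes R$ for $R$ one of the Kronecker algebra, $N^\circ(2)$, $N(3)$, $N^\circ(3)$, $\mathbf{A}_3^{(+-)}$, $\mathbf{A}_3^{(-+)}$, has a separated-quiver component that is neither Dynkin nor Euclidean. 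Two points deserve explicit flags. First, the paper's Proposition \ref{prop::rad-square-zero} only characterizes representation-finiteness; your wildness conclusions need the finer standard fact that $kQ/\rad^2 kQ$ is wild whenever $Q^s$ has a component that is neither Dynkin nor Euclidean (equivalently, that the representation type of a radical square zero algebra agrees with that of the hereditary algebra of its separated quiver) --- true, but it should be cited, e.g.\ from \cite{ARS} or \cite{G-separated}, since it is not in the paper's toolkit. Second, it is essential that these computations are run on the full triple tensor product and not on a two-factor truncation: for instance the separated quiver of $\mathbf{A}_2\otimes N^\circ(2)$ is a disjoint union of two copies of $\mathbb{A}_4$ and detects nothing, because the relevant obstruction lives in $\rad^2$; you do work with $\mathbf{A}_2\otimes\mathbf{A}_2\otimes R$, so this is fine, but a reader tempted by the shortcut would fail. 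A cosmetic remark: your appeal to ``convex subcategories'' for the three-vertex step is unnecessary and slightly delicate (a connected full three-vertex subquiver need not be convex, and $eAe$ can acquire extra arrows); it is cleaner to kill all arrows outside a chosen two-arrow walk and project onto its connected component, which stays entirely within quotients. Finally, you correctly isolate the one genuinely nontrivial input --- tameness of the commutative cube --- and outsource it to \cite{LS-tame-tensor-product}; with that citation the argument is complete.
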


We then list some well-known reduction methods on the representation-finiteness of algebras, especially, tensor product algebras, which are due to the existence of some full faithful functors or algebra isomorphisms.
\begin{proposition}\label{prop::quotient}
If $A$ is a representation-finite algebra, then
\begin{enumerate}
\item[\rm{(1)}] the quotient algebra $A/I$ is representation-finite for any two-sided ideal $I$ of $A$.
\item[\rm{(2)}] the idempotent truncation $eAe$ is representation-finite for any idempotent $e$ of $A$.
\end{enumerate}
\end{proposition}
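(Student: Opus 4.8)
The plan is to realise both $\operatorname{mod}(A/I)$ and $\operatorname{mod}(eAe)$ as full subcategories of $\operatorname{mod} A$ by means of exact, full and faithful additive functors, and then to invoke the elementary principle that such a functor carries a complete list of pairwise non-isomorphic indecomposables to a list of pairwise non-isomorphic indecomposables. Indeed, a full and faithful additive functor $F$ induces ring isomorphisms $\End(X)\xrightarrow{\ \sim\ }\End(F(X))$, hence preserves and reflects the splitting off of idempotents, so $X$ is indecomposable if and only if $F(X)$ is; and $F(X)\cong F(X')$ forces $X\cong X'$. Thus the domain category has at most as many indecomposables (up to isomorphism) as $\operatorname{mod} A$, which is finite by hypothesis. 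The whole argument is formal once the two functors are exhibited.

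For part (1), the functor is restriction of scalars along the canonical surjection $\pi\colon A\twoheadrightarrow A/I$. It is exact, and it is full and faithful because an $A$-linear map between $A$-modules annihilated by $I$ is the same datum as an $A/I$-linear map; its essential image is the full subcategory of $A$-modules killed by $I$. This part is essentially trivial.

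For part (2), take $F:=Ae\otimes_{eAe}(-)\colon \operatorname{mod}(eAe)\to\operatorname{mod} A$; note that $F(N)$ is finite-dimensional whenever $N$ is, since $\dim_k Ae<\infty$. The tensor--hom adjunction gives a natural isomorphism $\operatorname{Hom}_A(F(N),M)\cong\operatorname{Hom}_{eAe}(N,eM)$, where $eM\cong\operatorname{Hom}_A(Ae,M)$ as $eAe$-modules. Applying this with $M=F(N')$ and using the natural identification $e\cdot F(N')=eAe\otimes_{eAe}N'\cong N'$, one obtains $\operatorname{Hom}_A(F(N),F(N'))\cong\operatorname{Hom}_{eAe}(N,N')$, so $F$ is full and faithful; it is also exact on the (abelian, Artinian) category $\operatorname{mod}(eAe)$ as a left adjoint composed with the exactness coming from $Ae$ being projective as an $eAe$-module. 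The conclusion follows as above.

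There is no genuine obstacle here — the statement is standard — but the one place requiring a little care is part (2): one must check the finite-dimensionality of $F(N)$ and the canonical isomorphism $e\cdot(Ae\otimes_{eAe}N)\cong N$ underpinning full faithfulness, whereas part (1) is immediate. I would present both parts uniformly by first proving the general lemma on full and faithful functors and indecomposables, and then supplying the two functors.
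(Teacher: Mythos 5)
Your argument is correct and is exactly what the paper has in mind: the paper states this proposition without proof, remarking only that such reductions are ``due to the existence of some full faithful functors,'' which are precisely the two functors you exhibit (restriction of scalars along $A\twoheadrightarrow A/I$, and $Ae\otimes_{eAe}-$ with full faithfulness coming from the unit $N\xrightarrow{\sim} e(Ae\otimes_{eAe}N)$ being an isomorphism). One small inaccuracy: $Ae$ need not be projective (or even flat) as a right $eAe$-module --- e.g.\ for $A=k\Delta_2/\langle\beta\alpha\rangle$ and $e=e_1$ one has $e_2Ae_1\cong k$ over $e_1Ae_1\cong k[x]/(x^2)$ --- so $Ae\otimes_{eAe}-$ is in general only right exact; but exactness plays no role in your argument, which needs only that the functor is additive, full, faithful, and lands in finite-dimensional modules.
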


\begin{corollary}
If $A\otimes B$ is representation-finite, both $A$ and $B$ are representation-finite.
\end{corollary}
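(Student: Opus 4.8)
The plan is to realize both $A$ and $B$ as quotient algebras of $A\otimes B$ and then invoke Proposition~\ref{prop::quotient}(1).

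First I would note that, since $k$ is algebraically closed, the semisimple quotient $B/\rad B$ is isomorphic to a product $k^{|B|}$ of copies of $k$; composing the canonical surjection $B\twoheadrightarrow B/\rad B$ with the projection onto any one factor produces a surjective $k$-algebra homomorphism $\pi\colon B\to k$. Concretely, one may take $\pi$ to send a fixed vertex idempotent $e_b$ of $Q_B$ to $1$ and all remaining vertices and all arrows to $0$; that this is a well-defined algebra map is immediate because $I_B$ is admissible, so every generator of $I_B$ is a combination of paths of length $\ge 2$ and is killed anyway.

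Next, since $-\otimes_k-$ is exact, the map $\mathrm{id}_A\otimes\pi\colon A\otimes B\to A\otimes k$ is again surjective, and there is a canonical isomorphism $A\otimes k\simeq A$. Hence $A$ is isomorphic to a quotient algebra of $A\otimes B$, and Proposition~\ref{prop::quotient}(1) gives that $A$ is representation-finite whenever $A\otimes B$ is. Exchanging the roles of $A$ and $B$ — or, equivalently, applying the same argument to $B\otimes A\simeq A\otimes B$ — yields the representation-finiteness of $B$ as well. Alternatively, one could run the argument through the idempotent truncation $(1_A\otimes e_b)(A\otimes B)(1_A\otimes e_b)\simeq A\otimes e_bBe_b$ and Proposition~\ref{prop::quotient}(2), but the quotient formulation above is the most direct.

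There is no genuine obstacle in this statement; the only points that require a moment of care are that the ground field is algebraically closed (so that $B/\rad B$ decomposes as a product of copies of $k$ and therefore admits a surjection onto $k$) and that tensoring over $k$ preserves surjectivity, both of which are standard.
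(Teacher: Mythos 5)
Your argument is correct and is exactly the intended one: the paper states this corollary as an immediate consequence of Proposition~\ref{prop::quotient}, with $A$ (resp.\ $B$) realized as a quotient of $A\otimes B$ via a surjection $B\to k$ (resp.\ $A\to k$) killing the radical and all but one vertex idempotent. Nothing is missing.
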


\begin{proposition}
The opposite algebra $A^{\op}$ shares the same representation type with $A$.
\end{proposition}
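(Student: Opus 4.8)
The plan is to invoke the standard $k$-linear duality. Let $D:=\operatorname{Hom}_k(-,k)$. Since $A$ is a finite-dimensional $k$-algebra, $D$ carries a finite-dimensional left $A$-module $M$ to the finite-dimensional right $A$-module (equivalently, left $A^{\op}$-module) $\operatorname{Hom}_k(M,k)$, and the evaluation maps $M\to DDM$ are natural isomorphisms. Hence $D$ restricts to a contravariant equivalence between the category of finite-dimensional $A$-modules and the category of finite-dimensional $A^{\op}$-modules; that is, $D$ is a duality. First I would record this, together with the trivial observation $(A^{\op})^{\op}=A$, which makes the situation symmetric in $A$ and $A^{\op}$.

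Next I would treat the representation-finite case, which is immediate: a duality sends indecomposable modules to indecomposable modules and reflects isomorphisms, so it induces a bijection between the set of isomorphism classes of indecomposable finite-dimensional $A$-modules and the corresponding set for $A^{\op}$. In particular, one of these sets is finite if and only if the other is, so $A$ is representation-finite if and only if $A^{\op}$ is. This is the only part of the statement that is actually used later in the paper.

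Finally, to obtain the full claim about representation type, I would transport the defining parametrizations through $D$. If $A$ is tame, then for each dimension $d$ all but finitely many indecomposable $d$-dimensional $A$-modules are specializations $M_i\otimes_{k[t]}k[t]/(t-\lambda)$ of finitely many $A$-$k[t]$-bimodules $M_i$ that are free of finite rank over $k[t]$; applying $D$ and equipping $\operatorname{Hom}_{k[t]}(M_i,k[t])$ with the transposed bimodule structure yields the analogous families over $A^{\op}$, so $A^{\op}$ is tame, and the converse follows by symmetry. Likewise, composing a representation embedding $\operatorname{mod} k\langle x,y\rangle\to\operatorname{mod} A$ with $D$ and with the duality of $k\langle x,y\rangle$ produces a representation embedding $\operatorname{mod} k\langle x,y\rangle\to\operatorname{mod} A^{\op}$, so wildness is likewise preserved; the tame case is then forced by Drozd's trichotomy quoted in the Introduction. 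The only mildly delicate point is the bookkeeping that checks the bimodule structures are carried correctly by $D$ so that parametrizing families go to parametrizing families; everything else is formal.
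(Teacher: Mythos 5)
Your argument is correct and is exactly the standard justification: the paper itself states this proposition without proof, listing it among the well-known reduction facts, so there is nothing in the text to compare against beyond folklore. The $k$-duality $D=\operatorname{Hom}_k(-,k)$ does give a bijection on isomorphism classes of indecomposables preserving dimension, which settles the representation-finite case (the only case the paper actually uses later), and your transport of one-parameter families and of representation embeddings through $D$ (together with $(A^{\op})^{\op}=A$ and Drozd's trichotomy) correctly handles tame and wild; the composite $D\circ F\circ D'$ you build is exact, preserves indecomposability and reflects isomorphisms, which is all that is needed for wildness in the sense quoted in the introduction.
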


\begin{proposition}[{\cite[Theorem 3.11]{MW21}}]\label{prop::3*3}
Suppose $A$ and $B$ contain $\mathbf{A}_3^\epsilon$ and $\mathbf{A}_3^\omega$ as quotient algebras, for some choices of $\epsilon$ and $\omega$, respectively. Then, $A\otimes B$ is representation-infinite.
\end{proposition}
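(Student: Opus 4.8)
The plan is to prove Proposition \ref{prop::3*3} by exhibiting, inside $A\otimes B$, an explicit quotient algebra or idempotent truncation that is already known to be representation-infinite, and then invoke Proposition \ref{prop::quotient}. Since $A$ has $\mathbf{A}_3^\epsilon$ as a quotient for some orientation $\epsilon$, and similarly $B$ has $\mathbf{A}_3^\omega$ as a quotient, the surjections $A\twoheadrightarrow \mathbf{A}_3^\epsilon$ and $B\twoheadrightarrow \mathbf{A}_3^\omega$ induce a surjection of tensor product algebras $A\otimes B\twoheadrightarrow \mathbf{A}_3^\epsilon\otimes \mathbf{A}_3^\omega$ (functoriality of $-\otimes-$ with respect to algebra quotients; concretely, the kernel is $(I_\epsilon\otimes B)+(A\otimes I_\omega)$ where $I_\epsilon, I_\omega$ are the relevant ideals). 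Hence by Proposition \ref{prop::quotient}(1) it suffices to show that $\mathbf{A}_3^\epsilon\otimes \mathbf{A}_3^\omega$ is representation-infinite for every choice of orientations $\epsilon,\omega\in\{(+,+),(+,-),(-,+),(-,-)\}$.

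There are, up to the symmetries $A\otimes B\simeq B\otimes A$ and $(A\otimes B)^{\op}\simeq A^{\op}\otimes B^{\op}$ (Proposition 2.3), only a couple of genuinely distinct cases: the linearly oriented $\overset{\rightarrow}{\mathbb{A}}_3\otimes \overset{\rightarrow}{\mathbb{A}}_3$, and the mixed orientations where at least one factor is the subspace/factorspace quiver $1\to 2\leftarrow 3$. For each representative I would write down the quiver $Q_{\mathbf{A}_3^\epsilon}\otimes Q_{\mathbf{A}_3^\omega}$ together with the commutativity relations $I_\epsilon\diamond I_\omega$ explicitly — this is a $3\times 3$ grid of vertices with commuting squares — and then identify a full convex subquiver (equivalently, an idempotent truncation $e(\mathbf{A}_3^\epsilon\otimes\mathbf{A}_3^\omega)e$ for a suitable sum of vertex idempotents $e$) whose bound quiver algebra is a known representation-infinite algebra. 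The natural candidates are a tame concealed algebra of type $\widetilde{\mathbb{A}}_n$ or $\widetilde{\mathbb{D}}_n$ sitting inside the grid, or, in the linearly oriented case, one recognizes $\overset{\rightarrow}{\mathbb{A}}_3\otimes \overset{\rightarrow}{\mathbb{A}}_3$ as a well-studied algebra (the commutative $3\times 3$ grid) that is already on the list of minimal representation-infinite algebras; alternatively one can cite that $T_3(\mathbf{A}_2)=\mathbf{A}_2\otimes\mathbf{A}_2\otimes\mathbf{A}_2$ sits inside via $\mathbf{A}_3\twoheadrightarrow$ nothing — rather, the cleanest route is Proposition \ref{prop::three-tensor}: $\mathbf{A}_3^\epsilon$ has $\mathbf{A}_2$ as a quotient in such a way that... no — instead I note $\mathbf{A}_3^\epsilon\otimes\mathbf{A}_3^\omega$ contains $\mathbf{A}_2\otimes\mathbf{A}_2\otimes\mathbf{A}_2$? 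That is false dimension-wise; so the honest approach is the explicit subquiver/truncation argument case by case.

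Concretely: for $\epsilon=\omega=(+,+)$ the algebra $N$ is the $3\times3$ commutative grid $kQ/I$; truncating to the "hook" of $5$ vertices forming $\widetilde{\mathbb{A}}$ or observing the grid contains a subspace configuration of type $\widetilde{\mathbb{D}}_4$ gives representation-infiniteness. For the mixed cases, the grid with $1\to2\leftarrow3$ in one direction produces, after truncation, the path algebra of an extended Dynkin quiver (typically $\widetilde{\mathbb{D}}_4$ or $\widetilde{\mathbb{A}}_3$) directly, with no relations obstructing, so it is representation-infinite by Gabriel's theorem. I would present these as a short finite list with, for each, a one-line identification of the offending convex subquiver and its type, appealing to the classification of tame concealed algebras recalled at the end of Section 2 (and to Gabriel's theorem for the hereditary ones).

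The main obstacle — really a bookkeeping burden rather than a conceptual one — is making sure the truncation/quotient one picks inside each $3\times3$ grid is genuinely convex (so that the idempotent truncation is a bound quiver algebra with the induced relations and no more), and that the induced relations do not accidentally kill the extended Dynkin shape: in the commutative grid the squares commute, so a truncated subalgebra may inherit a commutativity relation that turns a would-be $\widetilde{\mathbb{A}}$ quiver into something tame or even representation-finite. The fix is to choose the subquiver so that it meets each commuting square in at most a path (never a full square), which for the $3\times3$ grid can always be arranged; I would verify this explicitly for each of the at most four orientation cases. Once the convex representation-infinite subquiver is pinned down in every case, Proposition \ref{prop::quotient} finishes the argument.
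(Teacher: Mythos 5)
The paper does not prove this proposition at all: it is imported verbatim as \cite[Theorem 3.11]{MW21}, so there is no internal argument to compare yours against. Your reduction step is correct and is the only reasonable one: the surjections $A\twoheadrightarrow\mathbf{A}_3^\epsilon$ and $B\twoheadrightarrow\mathbf{A}_3^\omega$ induce a surjection $A\otimes B\twoheadrightarrow\mathbf{A}_3^\epsilon\otimes\mathbf{A}_3^\omega$, and Proposition \ref{prop::quotient}(1) reduces everything to the finitely many algebras $\mathbf{A}_3^\epsilon\otimes\mathbf{A}_3^\omega$. (Up to $A\otimes B\simeq B\otimes A$, $(A\otimes B)^{\op}\simeq A^{\op}\otimes B^{\op}$ and $\mathbf{A}_3^{(++)}\simeq\mathbf{A}_3^{(--)}$ there are four cases, not ``a couple''.) The digression attempting to route through Proposition \ref{prop::three-tensor} via $\mathbf{A}_2\otimes\mathbf{A}_2\otimes\mathbf{A}_2$ is rightly abandoned and should simply be deleted.

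The gap is that the case analysis, which is the entire content of the claim, is only promised (``I would write down\dots'', ``I would verify\dots'') rather than carried out. It can in fact be closed uniformly, with no case distinction: let $e$ be the sum of the idempotents at the centre vertex $(2,2)$ of the $3\times3$ grid and its four neighbours $(1,2),(3,2),(2,1),(2,3)$. The underlying graph of the corresponding subquiver is the four-pointed star $\widetilde{\mathbb{D}}_4$ for every choice of $\epsilon,\omega$, and a direct dimension count (using that each $e_j\mathbf{A}_3^\epsilon e_i$ is at most one-dimensional and that the squares of $I_\epsilon\diamond I_\omega$ commute) shows $e(\mathbf{A}_3^\epsilon\otimes\mathbf{A}_3^\omega)e$ is the hereditary algebra of that orientation of $\widetilde{\mathbb{D}}_4$, hence representation-infinite by Gabriel's theorem; Proposition \ref{prop::quotient}(2) then finishes. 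Note also that your worry about convexity is a red herring for this route: Proposition \ref{prop::quotient}(2) applies to an arbitrary idempotent, so you never need the vertex subset to be convex --- you only need to compute $eAe$ honestly, which is exactly the dimension count above. As written, though, the proposal stops short of any such computation, so it does not yet constitute a proof.
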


We need the handy criteria for representation-finiteness of radical square zero algebras. Recall that the separated quiver $Q^s=(Q_0^s, Q_1^s)$ of a connected quiver $Q=(Q_0,Q_1)$ is given by $Q_0^s:=\{i^+,i^-\mid i\in Q_0\}$ and $Q_1^s:=\{i^+\rightarrow j^-\mid i\rightarrow j \in Q_1 \}$.

\begin{proposition}[{\cite{G-separated, ARS}}]\label{prop::rad-square-zero}
Let $A=kQ/\rad^2 kQ$. Then, $A$ is representation-finite if and only if the separated quiver $Q^s$ of $Q$ is a disjoint union of Dynkin quivers.
\end{proposition}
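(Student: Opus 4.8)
The statement is the classical \emph{separated quiver} criterion of Gabriel and of Auslander--Reiten--Smal\o{}, and the plan is to reduce the representation type of $A=kQ/\rad^2 kQ$ to that of the hereditary path algebra $kQ^s$ and then to invoke Gabriel's theorem on representation-finite hereditary algebras.

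The first step is to record how $A$-modules decompose. Put $r:=\rad A$, so $r^2=0$ and $A/r\cong k^{|Q_0|}$, a product of copies of $k$ indexed by the vertices. For a finite-dimensional $A$-module $M$, both $rM$ and $M/rM$ are semisimple, since $r(rM)=r^2M=0$ and $r$ annihilates $M/rM$; and, because $r^2=0$, the entire module structure of $M$ is recorded by the weight-space gradings of $rM$ and $M/rM$ together with the $(A/r)$-linear multiplication map $r\otimes_{A/r}(M/rM)\to rM$. Identifying $A/r$ with $k^{|Q_0|}$ and using that $r$, as an $(A/r,A/r)$-bimodule, has the arrows of $Q$ for a $k$-basis, one sees that this data is exactly a representation of the separated quiver $Q^s$: the components of $M/rM$ sit at the vertices $i^{+}$, those of $rM$ at the vertices $j^{-}$, and the arrow $i^{+}\to j^{-}$ carries the piece of the multiplication map indexed by an arrow $i\to j$. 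This produces a functor $F\colon \operatorname{mod}A\to\operatorname{mod}kQ^s$, $M\mapsto(M/rM,\,rM,\,\mathrm{mult})$, together with an explicit construction $G$ in the other direction which reconstitutes an $A$-module out of a representation of $Q^s$.

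The heart of the argument is to verify that $F$ and $G$ induce a \emph{stable equivalence} $\underline{\operatorname{mod}}\,A\simeq\underline{\operatorname{mod}}\,kQ^s$; this is precisely what is carried out in \cite{G-separated,ARS}, so I would either reproduce or quote it. Granting this, a stable equivalence gives a bijection between the iso-classes of non-projective indecomposables on the two sides, and since $A$ has $|Q_0|$ and $kQ^s$ has $|Q_0^s|=2|Q_0|$ indecomposable projectives, it follows that $A$ is representation-finite if and only if $kQ^s$ is. Now $Q^s$ is acyclic --- every path in it has length at most $1$ --- so $kQ^s$ is a finite-dimensional hereditary algebra, and Gabriel's theorem says it is representation-finite if and only if each connected component has underlying graph a Dynkin diagram of type $\mathbb{A}_n$, $\mathbb{D}_n$, $\mathbb{E}_6$, $\mathbb{E}_7$ or $\mathbb{E}_8$; equivalently, if and only if $Q^s$ is a disjoint union of Dynkin quivers. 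Combining the two equivalences yields the statement.

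I expect the main obstacle to be exactly that middle step --- checking that $F$ is well defined and functorial, that $G$ lands in $\operatorname{mod}A$, and that $FG$ and $GF$ return the identity up to projective summands and isomorphism, so that one genuinely obtains a stable equivalence. (This is strictly weaker than an equivalence of module categories: already $k[x]/(x^2)$ and $kQ^s\simeq\mathbf{A}_2$ have different numbers of indecomposable modules, namely $2$ and $3$, because the projective counts differ by $|Q_0|$.) A smaller but real point worth keeping in view is that $Q^s$ can be disconnected even though $Q$ is connected, which is why the conclusion must say ``disjoint union of Dynkin quivers'' rather than ``a Dynkin quiver''.
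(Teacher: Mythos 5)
Your proof is correct and follows exactly the classical route of the sources the paper cites for this proposition ([G72], [ARS]); the paper itself quotes the result without proof. The functor $M\mapsto (M/\rad M,\ \rad M,\ \mathrm{mult})$ inducing a stable equivalence between $A$ and the hereditary algebra $kQ^s$, followed by Gabriel's theorem and the observation that stable equivalence preserves representation-finiteness since both algebras have only finitely many projectives, is precisely the standard argument, and your caveats (it is only a stable equivalence, not an equivalence of module categories, and $Q^s$ may be disconnected) are the right ones.
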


For example, the local algebra $k[x,y]/(x^2,y^2,xy)$ is representation-infinite. 

\begin{corollary}\label{coro::double-arrow}
If $A\otimes B$ is representation-finite, both $A$ and $B$ admit no double arrow and no double loops on one vertex.
\end{corollary}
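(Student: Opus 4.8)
The plan is to deduce the statement from the separated-quiver criterion for radical square zero algebras, Proposition \ref{prop::rad-square-zero}. Since $A\otimes B$ is representation-finite, both $A$ and $B$ are representation-finite by the corollary recorded immediately after Proposition \ref{prop::quotient}. By the symmetry $A\otimes B\simeq B\otimes A$ it therefore suffices to prove the following purely ``one-algebra'' statement: \emph{a representation-finite bound quiver algebra $C=kQ/I$ admits no double arrow and no double loop}.

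To prove this, I would pass to the radical square zero quotient $\overline{C}:=C/\rad^2 C=kQ/\rad^2 kQ$, which is representation-finite by Proposition \ref{prop::quotient}(1) and has the same quiver $Q$ as $C$. Suppose, for contradiction, that $Q$ contains two distinct arrows $\alpha,\beta$ with common source $i$ and common target $j$; this single situation covers both the double-arrow case ($i\neq j$) and the double-loop case ($i=j$). In the separated quiver $Q^s$ the vertices $i^{+}$ and $j^{-}$ are always distinct (even when $i=j$), and $\alpha,\beta$ yield two distinct arrows $i^{+}\to j^{-}$. Hence the connected component of $Q^s$ through $i^{+}$ carries a multiple edge, so its underlying graph is not a tree and in particular is none of the Dynkin graphs $\mathbb{A}_n,\mathbb{D}_n,\mathbb{E}_6,\mathbb{E}_7,\mathbb{E}_8$. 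By Proposition \ref{prop::rad-square-zero}, $\overline{C}$ is representation-infinite, contradicting the representation-finiteness of $C$. Applying this to $C=A$ and to $C=B$ finishes the proof.

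For the double-loop case one can also argue more concretely, avoiding the separated quiver: if a vertex $i\in Q_0$ carries two loops, then the idempotent truncation $e_iCe_i$ is representation-finite by Proposition \ref{prop::quotient}(2), while it has the local algebra $k[x,y]/(x^2,y^2,xy)$ as a quotient, and this algebra was noted above to be representation-infinite --- again a contradiction. Either route works, and I would probably record the short separated-quiver argument in the text since it treats both cases uniformly.

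I do not expect a genuine obstacle here; the result is an immediate corollary of Propositions \ref{prop::quotient} and \ref{prop::rad-square-zero}. The only point requiring a little care is the observation that passing to the separated quiver keeps the two copies $i^{+}$ and $i^{-}$ of a vertex apart, so that a double loop of $Q$ really does produce an honest Kronecker-type component in $Q^s$ (rather than something degenerate); once this is noted, the Dynkin classification does the rest.
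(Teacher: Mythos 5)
Your argument is correct and is essentially the paper's: the paper's one-line proof likewise reduces to the representation-infiniteness of the Kronecker algebra (for a double arrow) and of $k[x,y]/(x^2,y^2,xy)$ (for a double loop), both of which are instances of the separated-quiver criterion of Proposition \ref{prop::rad-square-zero} that you apply directly. Your only addition is spelling out the reduction to $A$ and $B$ and the uniform treatment of the two cases, which is fine.
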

\begin{proof}
It follows that the Kronecker algebra $k(\xymatrix@C=0.7cm{1\ar@<0.5ex>[r]^{ }\ar@<-0.5ex>[r]_{ }&2})$ is representation-infinite.
\end{proof}

We are now able to give our main reduction methods on $A\otimes B$ as follows.
\begin{lemma}\label{lem::cycle}
Suppose both $A$ and $B$ contain a loop or an oriented cycle in their quivers. Then, $A\otimes B$ is representation-infinite.
\end{lemma}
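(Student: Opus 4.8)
The plan is to use the Galois covering criterion (Corollary \ref{cor::covering}) together with Proposition \ref{prop::rad-square-zero} to exhibit a representation-infinite idempotent truncation inside $\widetilde{A}\otimes\widetilde{B}$, or equivalently inside a suitable quotient. The key observation is that if $A$ has a loop or oriented cycle in its quiver, then its universal Galois covering $\widetilde{A}$ contains an arbitrarily long linear $\overset{\rightarrow}{\mathbb{A}}_m$-type sub-shape: unwinding a loop $x$ at a vertex produces the infinite line $\cdots\to\circ\to\circ\to\cdots$, and unwinding an oriented cycle of length $\ell$ likewise produces an infinite line (the covering of $\Delta_n$, or of any algebra with an oriented cycle through the cyclic part, is infinite and contains long linear stretches). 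The same holds for $\widetilde{B}$. So $\widetilde{A}\otimes\widetilde{B}$ contains, as an idempotent truncation (restricting to a finite grid of vertices), the tensor product $\overset{\rightarrow}{\mathbb{A}}_m\otimes\overset{\rightarrow}{\mathbb{A}}_m$ of two long linear quivers, possibly bound by some relations inherited from $I_A\diamond I_B$.

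First I would reduce to the radical-square-zero situation. By Proposition \ref{prop::quotient}(1) we may pass to the quotients $A/\rad^2 A$ and $B/\rad^2 B$; since a loop or oriented cycle survives in the quiver (the quiver is unchanged by quotienting by $\rad^2$, only relations are added), it suffices to prove the statement when both $A$ and $B$ are radical square zero. Then $A\otimes B$ is itself radical square zero is \emph{not} quite true — $(I_A\diamond I_B)$ contains the commutativity relations, so $A\otimes B$ is not radical square zero in general. Instead I would further quotient $A\otimes B$ by its radical square: by Proposition \ref{prop::quotient}(1), it is enough to show that $(A\otimes B)/\rad^2(A\otimes B)=k(Q_A\otimes Q_B)/\rad^2$ is representation-infinite. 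Now apply Proposition \ref{prop::rad-square-zero}: I must show the separated quiver of $Q_A\otimes Q_B$ is not a disjoint union of Dynkin quivers. Concretely, pick a loop or oriented-cycle arrow $\alpha$ in $Q_A$ through a vertex $i$, and a loop or oriented-cycle arrow $\beta$ in $Q_B$ through a vertex $s$. Then in $Q_A\otimes Q_B$ the vertex $(i,s)$ receives the arrow $(\alpha,s)$ (from the source of $\alpha$ crossed with $s$) and the arrow $(i,\beta)$ (from $i$ crossed with the source of $\beta$), and in the oriented-cycle case $(i,s)$ also has arrows going out; tracking two arrows into and/or out of a common vertex through the cyclic structure produces in $(Q_A\otimes Q_B)^s$ either a vertex of degree $\ge 3$ lying on a path that is too long to be Dynkin, or a cycle — in either case a non-Dynkin component.

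The cleanest way to finish is to identify an explicit representation-infinite subquiver-with-relations. Taking the loop/cycle arrows, the sub-bound-quiver of $Q_A\otimes Q_B$ on the four vertices forming the "square" spanned by $\alpha$ and $\beta$ — with the commutativity relation between them — is the quiver of $\mathbf{A}_2\otimes\mathbf{A}_2=T_2(\mathbf{A}_2)$, which is representation-finite; so a single $2\times 2$ square is not enough, and this is where the \emph{global} cyclic structure is essential. The correct move is: unwinding both cycles simultaneously in the universal Galois covering $\widetilde{A}\otimes\widetilde{B}$ gives (a bound quiver containing) $\widetilde{N(p)}\otimes\widetilde{N(q)}$, i.e.\ the radical-square-zero algebra on the infinite grid $\mathbb{Z}\times\mathbb{Z}$ with commutativity relations — and one checks via the separated quiver that already a finite piece of this grid, say $N(4)\otimes N(4)$ with the full commutativity relations (equivalently $T_4(N(4))$-style), is representation-infinite, because its separated quiver contains a component that is not Dynkin. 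By Corollary \ref{cor::covering}, $\widetilde{A}\otimes\widetilde{B}$ failing to be locally representation-finite forces $A\otimes B$ representation-infinite.

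I expect the main obstacle to be the bookkeeping in the last step: verifying that the relevant finite bound-quiver algebra built from the crossed cycles (a grid with commutativity relations, of the shape $\overset{\rightarrow}{\mathbb{A}}_m\otimes\overset{\rightarrow}{\mathbb{A}}_m$ modulo the diamond ideal) is genuinely representation-infinite, and making sure it embeds as an idempotent truncation (not merely a quotient) of $\widetilde{A}\otimes\widetilde{B}$, so that Theorem \ref{theo::covering}/Corollary \ref{cor::covering} applies. Here it will be convenient to invoke the already-classified case of triangular matrix algebras: $\mathbf{A}_2\otimes\mathbf{A}_2\otimes\mathbf{A}_2$ is tame by Proposition \ref{prop::three-tensor}, and one more tensor factor — or a longer linear quiver in one factor — pushes it to representation-infinite, which is exactly the content one extracts from the unwound cycles. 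I would organize the proof around producing, inside a quotient of the covering, a copy of $\mathbf{A}_3^\epsilon\otimes(\text{something with a cycle})$ or directly applying the separated-quiver test to a concrete finite grid, and cite Proposition \ref{prop::rad-square-zero} for the punchline.
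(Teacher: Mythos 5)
There is a genuine gap, and it sits exactly where you place your ``cleanest way to finish''. The finite grid algebras you propose as the representation-infinite witness are in fact representation-finite: $N(s)\otimes N(t)$, i.e.\ a finite piece of the $\mathbb{Z}\times\mathbb{Z}$ grid with $x^2=y^2=0$ and the commutativity relations, is a representation-finite special biserial algebra --- this is exactly what the paper cites from \cite{ABM} in the proof of Proposition \ref{prop::local-N(m)}, and it is also part (1) of the quoted theorem of \cite{MW21} ($N(n)\otimes N(m)$ is representation-finite for all $n,m\ge 3$). Concretely, the separated quiver of the finite grid quiver $\overset{\rightarrow}{\mathbb{A}}_s\otimes\overset{\rightarrow}{\mathbb{A}}_t$ decomposes into zigzag paths along the antidiagonals, all of Dynkin type $\mathbb{A}$, so Proposition \ref{prop::rad-square-zero} will never detect representation-infiniteness there. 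More fundamentally, the whole plan of unwinding both cycles in the universal covering and then hunting for a representation-infinite finite idempotent truncation cannot work: in the loop-meets-loop case the relevant quotient is $k[x,y]/(x^2,y^2)$, which is tame, its covering is the full $\mathbb{Z}^2$ grid, and that covering fails to be locally representation-finite only because of string modules of unbounded support through a fixed vertex --- every finite truncation is representation-finite. Unwinding destroys precisely the feature (the closed cycle) that makes the algebra representation-infinite.

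Your earlier, abandoned direction is the correct one and is essentially the paper's proof: do not unwind. Pass to radical-square-zero quotients of $A$ and $B$ supported on the loop/cycle (Proposition \ref{prop::quotient}) and apply the separated-quiver criterion to the quiver of the resulting tensor product, in which the cycle is still closed; then the separated quiver contains a closed zigzag of Euclidean type $\widetilde{\mathbb{A}}$ --- of length $2n$ when a loop meets an $n$-cycle, and $2\lcm(m,n)$ when an $m$-cycle meets an $n$-cycle --- while in the loop-loop case one can either quote tameness of $k[x,y]/(x^2,y^2)$ or note that after further killing $xy,yx$ the separated quiver is the Kronecker quiver. In your write-up this step is only gestured at (``a vertex of degree $\ge 3$ \ldots or a cycle'') and then explicitly discarded in favour of the flawed grid argument; the actual content of the lemma is the explicit verification that the antidiagonal zigzag closes up (this is where the $\lcm(m,n)$ bookkeeping on the torus-shaped quiver enters), and as it stands your proposal does not supply it.
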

\begin{proof}
(1) If both $A$ and $B$ contain a loop, then $A\otimes B$ contains $k[x]/(x^2)\otimes k[x]/(x^2)\simeq k[x,y]/(x^2, y^2)$ as a quotient algebra, which is a tame symmetric special biserial algebra (see \cite[III, Theorem 1]{Er-tame-block}). Hence, $A\otimes B$ is representation-infinite by Proposition \ref{prop::quotient}.

(2) If $A$ contains a loop and $B$ contains an oriented cycle, say,
\begin{center}
$\Delta_n=\xymatrix{1 \ar[r]_-x & 2 \ar[r]_-x & \cdots \ar[r]_-x & n \ar@/_1pc/[lll]_-x}$, $n\ge 2$,
\end{center}
then $A$ admits $A':=k[x]/(x^2)$ as a quotient and $B$ admits $B':=k\Delta_n/\langle x^2\rangle$ as a quotient.
By definition, the quiver of $A'\otimes B'$ is given by adding one loop on each vertex in $\Delta_n$. Then, we have $i^+\rightarrow i^-$, $i^+\rightarrow (i+1)^-$ for any $i\in \{1,2,\ldots, n\}$ $(n+1:=1)$ such that the separated quiver of the quiver of $A'\otimes B'$ is given as a quiver of type $\widetilde{\mathbb{A}}_{2n-1}$ with zigzag orientation. Hence, $A'\otimes B'$ is representation-infinite by Proposition \ref{prop::rad-square-zero}, and so is $A\otimes B$.

(3) If both $A$ and $B$ contain an oriented cycle, say, $\Delta_m$ and $\Delta_n$ ($m, n\ge 2$), respectively, then the quiver $Q_\Lambda$ of $\Lambda:=k\Delta_m/\langle x^2\rangle\otimes k\Delta_n/\langle x^2\rangle$ can be displayed in an $n\times m$ table. We use $\{(a,b)\mid 1\le a\le n, 1\le b\le m\}$ to indicate the vertex set of $Q_\Lambda$. Set $n+1:=1$ and $m+1:=1$. Then, we have $(a,b)^+\rightarrow (a+1,b)^-$ and $(a,b)^+\rightarrow (a,b+1)^-$, for any $1\le a\le n$, $1\le b\le m$. It gives the following subquiver $Q_\Lambda'$ of $Q_\Lambda^s$ with zigzag orientation
\begin{center}
$\xymatrix@C=0.5cm@R=0.7cm{
(1,1)^-&&&&&(1,m)^+\ar[lllll]\ar[d]\\
&&&&(2,m-1)^+\ar[r]\ar[d]&(2,m)^-\\
&&&(j+1,m-j)^+\ar[r]\ar[d]&\cdots&\\
&&(n-i,i+1)^+\ar[d]\ar[r]&\cdots&&\\
&\cdots\ar[d]\ar[r] &(n-i+1,i+1)^-&&&\\
(n,1)^+\ar[uuuuu]\ar[r]&(n,2)^-&&&&
}$
\end{center}
for some integers $i,j \ge 0$. Each pair $(i,j)$ satisfying $i+j+1=\lcm(m,n)$ gives 
\begin{center}
$n-i=j+1 \mod n \quad $ and $\quad i+1=m-j \mod m$
\end{center}
such that $(n-i,i+1)^+$ and $(j+1,m-j)^+$ identify the same vertex in $Q_\Lambda'$. Moreover, the subquiver $Q_\Lambda'$ has $2i+2j+1+1=2\lcm(m,n)$ vertices. Then, the separated quiver $Q_\Lambda^s$ of $Q_\Lambda$ admits a full subquiver of type $\widetilde{\mathbb{A}}_{2\lcm(m,n)-1}$ with zigzag orientation. Hence, $\Lambda$ is representation-infinite by Proposition \ref{prop::rad-square-zero}, and so is $A\otimes B$.
\end{proof}

The strategy in the proof of Lemma \ref{lem::cycle} can be used to prove the following theorem.

\begin{theorem}\label{theo::no-cycle}
Suppose both the underlying graphs of the quivers of $A$ and $B$ contain a cycle. Then, $A\otimes B$ is representation-infinite.
\end{theorem}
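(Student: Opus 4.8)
The plan is to follow the strategy of Lemma~\ref{lem::cycle}: realize a radical-square-zero algebra as a quotient of $A\otimes B$ and apply Proposition~\ref{prop::rad-square-zero}, but instead of exhibiting an explicit extended Dynkin subquiver inside the relevant separated quiver I would simply compare its numbers of vertices and edges.

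First, a reduction to cycle quivers. Since the underlying graph of $Q_A$ contains a cycle, $Q_A$ has a connected subquiver $C$ with some $\ell$ vertices and $\ell$ arrows whose underlying graph is a simple cycle (so $C$ may be a loop, a pair of arrows joining two vertices, or a polygon carrying an arbitrary orientation). Dividing $A$ by the two-sided ideal generated by $\rad^2 A$ together with all arrows of $Q_A$ lying outside $C$ yields a radical-square-zero algebra whose quiver is $C$ with finitely many isolated vertices adjoined; hence $kC/\rad^2 kC$ splits off as a connected direct factor and is in particular a quotient of $A$. Choosing a cycle quiver $D$ in $Q_B$ with $m$ vertices and $m$ arrows in the same way, $kD/\rad^2 kD$ is a quotient of $B$, so $kC/\rad^2 kC\otimes kD/\rad^2 kD$ is a quotient of $A\otimes B$, and by Proposition~\ref{prop::quotient} it suffices to prove that this tensor product is representation-infinite.

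Now the separated-quiver count. We have $kC/\rad^2 kC\otimes kD/\rad^2 kD\simeq k(C\otimes D)/(\rad^2 kC\diamond\rad^2 kD)$, and the defining ideal $\rad^2 kC\diamond\rad^2 kD$ is generated by elements of $\rad^2 k(C\otimes D)$ — the generators coming from the two factors are combinations of paths of length at least two, and the commutativity relations are differences of paths of length two — so $k(C\otimes D)/\rad^2 k(C\otimes D)$ is a quotient of it. By Proposition~\ref{prop::rad-square-zero} it is enough to show that the separated quiver $(C\otimes D)^s$ is not a disjoint union of Dynkin quivers. But $C\otimes D$ has $\ell m$ vertices and, from the description of $Q_C\otimes Q_D$, exactly $\ell m+\ell m=2\ell m$ arrows, so $(C\otimes D)^s$ has $2\ell m$ vertices and $2\ell m$ edges. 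A multigraph with at least as many edges as vertices is not a forest, so some connected component of $(C\otimes D)^s$ contains a cycle; as every Dynkin diagram is a tree, that component is not Dynkin and $(C\otimes D)^s$ is not a disjoint union of Dynkin quivers. Therefore $k(C\otimes D)/\rad^2$, and hence $kC/\rad^2 kC\otimes kD/\rad^2 kD$ and $A\otimes B$, are representation-infinite.

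The step requiring the most care is the reduction in the second paragraph — in particular verifying that $kC/\rad^2 kC$ genuinely splits off as a connected direct factor after the indicated quotient, for each of the shapes (loop, double arrow, polygon) that the cycle $C$ can take. The final count is completely uniform in $\ell$, $m$ and in the orientations of $C$ and $D$, so, unlike in Lemma~\ref{lem::cycle}, no separate treatment of the various cases is needed at that stage; for some of the degenerate configurations one could alternatively quote Lemma~\ref{lem::cycle} or Corollary~\ref{coro::double-arrow} directly.
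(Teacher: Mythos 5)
Your proof is correct, and its key step is genuinely different from the one in the paper. Both arguments begin the same way, by passing to radical-square-zero quotients supported on a cycle $C\subseteq Q_A$ and a cycle $D\subseteq Q_B$ and then invoking Proposition \ref{prop::rad-square-zero}; your reduction to $k(C\otimes D)/\rad^2 k(C\otimes D)$ as a quotient of $A\otimes B$ is sound (the direct-factor point you flag is harmless, since projecting onto a connected component of the quotient quiver is again a quotient). Where you diverge is in how non-Dynkinness of the separated quiver is certified: the paper splits into cases according to whether $\Delta_A$, $\Delta_B$ have sinks and on $|\Delta_B|\in\{1,2,\ge 3\}$, and in each case exhibits an explicit full subquiver of type $\widetilde{\mathbb{A}}$ with zigzag orientation (or a tame concealed algebra of type $\widetilde{\mathbb{D}}_4$), relying on Lemma \ref{lem::cycle} for the sink-free case and leaving some configurations to the reader; you replace all of this by the Euler-characteristic count $\#\text{edges}=\#\text{vertices}=2\ell m$ in $(C\otimes D)^s$, which forces some component not to be a tree and hence not to be Dynkin. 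Your argument is shorter, uniform in $\ell$, $m$ and the orientations, and subsumes Lemma \ref{lem::cycle} as far as this theorem is concerned; what it does not provide, and what the paper's explicit subquivers do, is the identification of concrete minimal representation-infinite pieces (extended Dynkin separated quivers, tame concealed truncations) that the paper reuses in the same style elsewhere. The only point to watch is the degenerate case where $C$ or $D$ is a pair of parallel arrows or a pair of loops, where one must read the separated quiver as keeping arrow multiplicities (as the paper implicitly does for the Kronecker example); as you note, those cases are anyway covered directly by Corollary \ref{coro::double-arrow} and Lemma \ref{lem::cycle}.
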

\begin{proof}
Using Corollary \ref{coro::double-arrow}, we may assume that both $Q_A$ and $Q_B$ contain no double arrow. We assume that a subquiver $\Delta_A$ of $Q_A$ and a subquiver $\Delta_B$ of $Q_B$ induce the cycles in the underlying graphs of $Q_A$ and $Q_B$, respectively. If both $\Delta_A$ and $\Delta_B$ admit no sink, then $\Delta_A$ and $\Delta_B$ are loops or orientated cycles in $A$ and $B$, and $A\otimes B$ is representation-infinite following Lemma \ref{lem::cycle}. We then assume in the following that $\Delta_A$ admits at least one sink, and hence, $|\Delta_A|=m\ge 3$.

(1) Suppose $|\Delta_B|=1$, i.e., $\Delta_B$ is a loop in $Q_B$. The separated quiver of $\Delta_A\otimes \Delta_B$ can be displayed in the form 
\begin{center}
$
\vcenter{\xymatrix@C=0.5cm@R=0.7cm{
1^+\ar@<-0.5ex>[d]&2^-&3^+\ar@<-0.5ex>[d]&\cdots&2i^-&(2i+1)^+\ar@<-0.5ex>[d]&\cdots\\
1^-&2^+\ar@<0.4ex>[u]&3^-&\cdots&2i^+\ar@<0.4ex>[u]&(2i+1)^-&\cdots
}}
$,
\end{center}
where the orientation of each neighbor $(j,j+1)$ in $\Delta_A$ gives either $j^+\rightarrow (j+1)^-$ or $(j+1)^+\rightarrow j^-$, for any $1\le j\le m$ and $m+1:=1$. We then obtain a full subquiver of type $\widetilde{\mathbb{A}}_{2m-1}$ with zigzag orientation, and $A\otimes B$ is representation-infinite by Proposition \ref{prop::rad-square-zero}.

(2) Suppose $|\Delta_B|=2$. 
We consider the following table
\begin{center}
$
\vcenter{\xymatrix@C=0.5cm@R=0.7cm{
11^+\ar@<-0.5ex>[d]&12^-&\cdots&1m^\epsilon&11^-&12^+\ar@<-0.5ex>[d]&\cdots&1m^{-\epsilon}\\
21^-&22^+\ar@<0.4ex>[u]&\cdots&2m^{-\epsilon}\ar@{-}[u]&21^+\ar@<0.4ex>[u]&22^-&\cdots&2m^{\epsilon\ar@{-}@<0.4ex>[u]}
}}
$,
\end{center}
with $\epsilon=+$ if $m$ is odd and $\epsilon=-$ if $m$ is even. Then, $j\rightarrow j+1$ in $\Delta_A$ gives $1j^+\rightarrow 1(j+1)^-$ and $2j^+\rightarrow 2(j+1)^-$, $j+1\rightarrow j$ in $\Delta_A$ gives $1(j+1)^+\rightarrow 1j^-$ and $2(j+1)^+\rightarrow 2j^-$, for any $1\le j\le m$ and $m+1:=1$. Then, it is always possible to find a full subquiver of type $\widetilde{\mathbb{A}}_n$ with zigzag orientation in the separated quiver of $\Delta_A\otimes \Delta_B$, for some $n\in \mathbb{N}$, we omit the details. It turns out that $A\otimes B$ is representation-infinite by Proposition \ref{prop::rad-square-zero}.

(3) Suppose $|\Delta_B|\ge 3$. If $\Delta_B$ admits at least one sink, then $\Delta_A\otimes \Delta_B$ contains the following subquiver
\begin{center}
$\vcenter{\xymatrix@C=1cm@R=0.5cm{\circ \ar[r] \ar[d]\ar@{.}[dr]&\bullet\ar[d] &\circ \ar@{.}[dl] \ar[l]\ar[d]\\ \bullet \ar[r] &\bullet &\bullet\ar[l] \\ \circ\ar[r]\ar[u]\ar@{.}[ur]&\bullet \ar[u]&\circ   \ar@{.}[ul] \ar[l]\ar[u]}}$,
\end{center}
where the bullets indicate a tame concealed algebra of type $\widetilde{\mathbb{D}}_4$.  

Otherwise, $\Delta_B$ is an orientated cycle in $Q_B$. Since $\Delta_A$ admits at least one sink, it has at least one source too. More precisely, $\Delta_A$ contains a subquiver of the form
\begin{center}
$\xymatrix{\circ\ar[r] &\circ  &\circ \ar[l]&\cdots\ar[l]&\circ\ar[l] &\circ\ar[l]\ar[r]&\circ }$.
\end{center}
Then, the separated quiver of $\Delta_A\otimes \Delta_B$ contains a subquiver which is of the form
\begin{center}
$\vcenter{\xymatrix@C=0.7cm@R=0.5cm{
&+\ar[d]&&&\\
+\ar[r]&-&+\ar[l]\ar[d]&&\\
&&\cdots&+\ar[l]\ar[r]\ar[d]&-\\
&&&-&
}}$,
\end{center}
where the middle dots contain only zigzag orientation. It suffices to use Proposition \ref{prop::rad-square-zero} to deduce that $A\otimes B$ is representation-infinite.
\end{proof}

\begin{theorem}\label{theo::B-type-D}
Suppose $A\not\simeq \mathbf{A}_2$. If the underlying graph of the quiver $Q_B$ of $B$ contains
$$
\mathbb{D}_4: \vcenter{\xymatrix@C=0.5cm@R=0.1cm{
&&1\ar@{-}[dl]\\
4&3\ar@{-}[l]&\\
&&2\ar@{-}[ul]}}
$$
as a subgraph, then $A\otimes B$ is representation-infinite.
\end{theorem}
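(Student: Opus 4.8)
The plan is to reduce, exactly in the spirit of Lemma~\ref{lem::cycle} and Theorem~\ref{theo::no-cycle}, to a statement about the separated quiver of a tensor product of two radical-square-zero algebras. First, if $A$ or $B$ is representation-infinite there is nothing to prove, since then $A$ (resp. $B$) is an idempotent truncation of $A\otimes B$, so $A\otimes B$ is representation-infinite by Proposition~\ref{prop::quotient}(2). Hence I may assume $A$ and $B$ are representation-finite; in particular $Q_A$ has no multiple arrows between two vertices and no repeated loop at one vertex, as such a subquiver would already make $A$, hence $A\otimes B$, representation-infinite (cf.\ Corollary~\ref{coro::double-arrow}). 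Since $A$ is non-simple, connected and not isomorphic to $\mathbf{A}_2$, the quiver $Q_A$ must then have a vertex $v$ of total degree at least $2$, where a loop at $v$ counts twice: otherwise the underlying graph of $Q_A$ has all vertices of degree $\le 1$, forcing $Q_A$ to be a single vertex or the quiver of $\mathbf{A}_2$.

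On the $B$-side, choose a subquiver $\Delta$ of $Q_B$ whose underlying graph is $\mathbb{D}_4$, with $c$ the vertex playing the role of the central node, and set $D:=k\Delta/\rad^2 k\Delta$. Truncating $B$ at the four vertices of $\Delta$ keeps the length-one arrows of $\Delta$ as arrows; passing to the radical-square-zero quotient of the truncation and then killing the arrows outside $\Delta$ exhibits $D$ as a quotient of an idempotent truncation of $B$. Likewise $\bar A:=A/\rad^2 A$ is a quotient of $A$. Hence $\bar A\otimes D$ is a quotient of an idempotent truncation of $A\otimes B$, and by Proposition~\ref{prop::quotient} it suffices to show that $\bar A\otimes D$ is representation-infinite. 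Since $\bar A$ and $D$ are radical-square-zero, the quotient $(\bar A\otimes D)/\rad^2(\bar A\otimes D)$ is radical-square-zero with quiver $Q_A\otimes\Delta$, so by Proposition~\ref{prop::rad-square-zero} it is enough to prove that the separated quiver $(Q_A\otimes\Delta)^s$ is \emph{not} a disjoint union of Dynkin quivers. For this I will exhibit inside $(Q_A\otimes\Delta)^s$ a connected subgraph isomorphic to $\widetilde{\mathbb{D}}_4$, $\widetilde{\mathbb{D}}_5$ or $\widetilde{\mathbb{E}}_6$ (a vertex of degree $\ge 4$ already gives a $\widetilde{\mathbb{D}}_4$): no such subgraph can sit inside a disjoint union of Dynkin quivers.

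The argument is entirely local around the vertex $w=(v,c)$ of $Q_A\otimes\Delta$. Its incident arrows are the $d_v\ge 2$ arrows induced by the arrows of $Q_A$ at $v$ (taken at level $c$) together with exactly three arrows induced by the three edges of $\Delta$ at $c$ (taken at level $v$); in $(Q_A\otimes\Delta)^s$ these $d_v+3\ge 5$ edges split between $w^+$ and $w^-$, so one of the two has degree $\ge 3$. If some vertex of the separated quiver has degree $\ge 4$ we are done. Otherwise one runs through the possible orientations of the arrows of $Q_A$ at $v$ (a loop at $v$; two arrows in a source, sink, through, or two-cycle configuration; or a vertex of degree $\ge 3$) and of the three edges of $\Delta$ at $c$: it is harmless to replace $Q_A$ by the full subquiver spanned by $v$ and its neighbours, so only these finitely many local patterns matter. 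In each pattern the three legs needed to complete a $\widetilde{\mathbb{E}}_6$ at $w^+$ or $w^-$ are supplied by the arrows at the three leaves of $\Delta$, any short leg being lengthened by a further arrow of $Q_A$ at $v$; when this is impossible, an arrow at a leaf forces a second degree-$3$ vertex adjacent to $w^\pm$ and one reads off a $\widetilde{\mathbb{D}}_5$ or $\widetilde{\mathbb{D}}_4$ instead. In every case $(Q_A\otimes\Delta)^s$ is not a disjoint union of Dynkin quivers, so $\bar A\otimes D$, and therefore $A\otimes B$, is representation-infinite.

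The only real obstacle is the bookkeeping in the last step: one must check that over all the orientation patterns the separated quiver always contains an \emph{extended} Dynkin subdiagram, not merely a Dynkin one. This is routine and of exactly the kind performed in the proofs of Lemma~\ref{lem::cycle} and Theorem~\ref{theo::no-cycle}, but it does require going through the (few) cases carefully; the reduction to the full subquiver of $Q_A$ at $v$ and its neighbours is what keeps the number of cases finite.
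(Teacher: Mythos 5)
Your proposal is correct, and it takes a genuinely different route from the paper's for the main case. The paper first reduces the $\mathbb{D}_4$-orientations to two representatives up to isomorphism and opposite algebras, then splits on $|A|$: for $|A|=1,2$ it passes to the explicit quotients $k[x]/(x^2)$ and $k(\circ\rightleftarrows\circ)/\langle x^2\rangle$ and exhibits non-Dynkin separated quivers, while for $|A|\ge 3$ it invokes Proposition~\ref{prop::3*3} (when $A$ surjects onto some $\mathbf{A}_3^\epsilon$) and otherwise a surjection $A\twoheadrightarrow N(3)$ together with the external result \cite[Proposition 4.10]{MW21}. You instead run a single uniform argument: reduce to the radical-square-zero algebra on $Q_A\otimes\Delta$ and work locally at $w=(v,c)$ with $\deg_{Q_A}(v)\ge 2$ and $c$ the hub of $\mathbb{D}_4$. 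This buys self-containedness (no appeal to \cite{MW21} and no case split on $|A|$) at the price of the final orientation bookkeeping, which you defer. That bookkeeping does close: if neither $w^+$ nor $w^-$ has degree $\ge 4$ (which would give $\widetilde{\mathbb{D}}_4$ outright), the degrees split as $(3,2)$ or $(3,3)$; at the degree-$3$ vertex, a leg through a neighbour of the form $(v,\ell)^{\mp}$ has length $\ge 2$ unless the leaf arrow $c\to\ell$ points away from $c$ while $v$ is a sink (or the dual configuration), and in exactly that situation the neighbours of the form $(u,c)^{\mp}$ acquire degree $\ge 3$ (since $u$ inherits the arrow from $v$ and $c$ has at least two incident arrows of the needed direction), yielding the $\widetilde{\mathbb{D}}_5$ you anticipate; in all remaining configurations the three legs reach length $2$ with pairwise distinct vertices (no multiple arrows), giving $\widetilde{\mathbb{E}}_6$. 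The only other point worth flagging is that your reduction of $D=k\Delta/\rad^2 k\Delta$ to a quotient of an idempotent truncation of $B$ should note that the arrows of $\Delta$ survive in $eBe$ because they lie in $\rad B\setminus\rad^2 B$ and $\rad^2(eBe)\subseteq e(\rad^2 B)e$; this is the standard argument and is fine.
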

\begin{proof}
Let $\mathbf{D}_4^\epsilon$ be the path algebra of $\mathbb{D}_4$ associated with orientation $\epsilon$, where $\epsilon:=(\epsilon_1,\epsilon_2,\epsilon_3)$ is defined by $\epsilon_1=+$ if $1\rightarrow 3$, $\epsilon_1=-$ if $3\rightarrow 1$, $\epsilon_2=+$ if $2\rightarrow 3$, $\epsilon_2=-$ if $3\rightarrow 2$, $\epsilon_3=+$ if $3\rightarrow 4$, $\epsilon_3=-$ if $4\rightarrow 3$. It suffices to show that $A\otimes (\mathbf{D}_4^\epsilon/\rad^2 \mathbf{D}_4^\epsilon)$ is representation-infinite, for $\epsilon=(+,+,+), (+,+,-)$, up to isomorphism. 

If $|A|=1$ and $A$ is non-simple, then $A$ admits $A':= k[x]/(x^2)$ as a quotient algebra. If $|A|=2$, $A\not \simeq \mathbf{A}_2$ and $A$ is representation-finite, then $A$ admits 
$$
A':=k(\xymatrix{\circ\ar@<0.5ex>[r]^-{x} & \circ \ar@<0.5ex>[l]^-{x}})/\langle x^2\rangle
$$
as a quotient algebra. In both two cases, the separated quiver of the quiver of $A'\otimes (\mathbf{D}_4^{(+++)}/\rad^2 \mathbf{D}_4^{(+++)})$ contains a subquiver which is of the form 
$$
\vcenter{\xymatrix@C=0.7cm@R=0.1cm{
-&+\ar[l]\ar[dr]&&&&\\
&&-&+\ar[l]\ar[r]&-&+\ar[l]\\
-&+\ar[l]\ar[ur]&&&&
}},
$$
and the separated quiver of the quiver of $A'\otimes \mathbf{D}_4^{(++-)}$ is displayed as 
$$
\vcenter{\xymatrix@C=0.7cm@R=0.1cm{
-&+\ar[l]\ar[dr]&&+\ar[dl]&\\
&&-&&\\
-&+\ar[l]\ar[ur]&&+\ar[lu]\ar[r]&-
}},
$$
both are not Dynkin quivers. By Proposition \ref{prop::rad-square-zero}, $A\otimes (\mathbf{D}_4^\epsilon/\rad^2 \mathbf{D}_4^\epsilon)$ is representation-infinite if $|A|=1$ or $2$.

Suppose $|A|\ge 3$. If $A$ admits $\mathbf{A}_3^{(+-)}$ or $\mathbf{A}_3^{(-+)}$ as a quotient, then $A\otimes (\mathbf{D}_4^\epsilon/\rad^2 \mathbf{D}_4^\epsilon)$ is representation-infinite following Proposition \ref{prop::3*3}. Otherwise, there is a surjection $A \twoheadrightarrow N(3)$, and it is shown in \cite[Proposition 4.10]{MW21} that $N(3)\otimes B$ is representation-infinite.
\end{proof}

\section{Local algebras}
We consider the case $A\otimes B$ with $A$ being a non-simple local algebra. It is well-known (e.g., \cite[I, 4.3.1]{Er-tame-block}) that a non-simple local algebra is representation-finite if and only if it is isomorphic to $k[x]/(x^n)$, for some $2\le n\in \mathbb{N}$. We therefore suppose in this section that
$$
A=k[x]/(x^n)\simeq k\left(\xymatrix{\circ \ar@(dr,ur)_{x}}\right) / \langle x^n\rangle.
$$
Note that $N^\circ(1)\simeq k[x]/(x^2)$. The universal Galois covering $\widetilde{A}$ of $A$ is given by
$$
\xymatrix@C=0.7cm@R=0.5cm{
\cdots \ar[r]^-x&\circ\ar[r]^-x&\circ \ar[r]^-x&\circ \ar[r]^-x&\cdots}
$$
with $x^n=0$. Using Theorem \ref{theo::no-cycle} and Theorem \ref{theo::B-type-D}, we may assume that the underlying graph of the quiver $Q_B$ of $B$ is of type $\mathbb{A}$, and $|B|\ge 2$.

\begin{proposition}\label{prop::local-A2}
Suppose $A\simeq k[x]/(x^n)$ for some $n\ge 2$, and $B\simeq \mathbf{A}_2$. Then, $A\otimes B$ is representation-finite if $n=2,3$, tame if $n=4$, and wild if $n\ge 5$.
\end{proposition}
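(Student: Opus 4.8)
The plan is to recognise $A\otimes B$ as a $2\times2$ triangular matrix algebra and then read off its representation type from the Leszczy\'nski--Skowro\'nski classification, via the covering technique of Section~2. Since $k[x]/(x^n)$ is commutative, $A\otimes B\simeq\mathbf{A}_2\otimes k[x]/(x^n)=T_2(k[x]/(x^n))$, i.e.\ the bound quiver algebra on
$$
\xymatrix@C=1.1cm{1\ar@(dl,ul)_{x}\ar[r]^{a}&2\ar@(dr,ur)^{y}}
$$
with relations $x^n=0$, $y^n=0$ and the commutativity relation $xa=ay$; equivalently, its module category is the category of morphisms between finitely generated $k[x]/(x^n)$-modules. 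The two structural inputs I would use are: the universal Galois covering of $k[x]/(x^n)$ --- and of $(k[x]/(x^n))^{\op}\simeq k[x]/(x^n)$ --- is the two-sided infinite line $\mathcal{L}_n$ with all arrows $x$, bounded by $x^n=0$; and consequently, by \cite[Lemma~1.7]{L-rep-type}, the universal Galois covering of $A\otimes B$ is $\mathcal{L}_n\otimes\mathbf{A}_2=T_2(\mathcal{L}_n)$, the ``ladder'' built from two copies of $\mathcal{L}_n$ joined by commutative squares.

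For the dichotomy representation-finite versus representation-infinite I would invoke \cite[Theorem~4]{LS-tame-triangular-matrix} with $k[x]/(x^n)$ as the base algebra: $A\otimes B$ is representation-finite if and only if $\mathcal{L}_n$ has no algebra of the family (IT) of \cite{LS-tame-triangular-matrix} as a quotient algebra. Since every finite-dimensional quotient of $\mathcal{L}_n$ is a quotient of a truncated linear algebra $k\overset{\rightarrow}{\mathbb{A}}_m/\rad^n k\overset{\rightarrow}{\mathbb{A}}_m$, any member of (IT) occurring inside $\mathcal{L}_n$ must have a linear quiver and Loewy length at most $n$; checking the explicit list (IT), such a member exists exactly in Loewy length $4$, hence occurs as a quotient of $\mathcal{L}_n$ if and only if $n\ge 4$. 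This gives: $A\otimes B$ is representation-finite precisely for $n=2,3$, and representation-infinite for $n\ge4$. (For $n=2$ one can also see this directly: by Corollary~\ref{cor::covering} it suffices that $T_2(\mathcal{L}_2)$ be locally representation-finite, which holds because each of its finite convex subcategories is an idempotent truncation of $\mathbf{A}_2\otimes N(m)=T_2(N(m))$, and the latter is a special biserial algebra whose associated string algebra is radical square zero with Dynkin separated quiver, hence representation-finite by Proposition~\ref{prop::rad-square-zero}.)

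It then remains to split $n\ge4$ into tame ($n=4$) and wild ($n\ge5$); as $A\otimes B$ is no longer representation-finite there, by Drozd's trichotomy \cite{Dr-tame-wild} it suffices to decide tameness. I would use that the representation type cannot improve under passage to a quotient algebra (Proposition~\ref{prop::quotient}) nor to a finite convex subcategory of the universal Galois covering (Theorem~\ref{theo::covering} together with the covering results of \cite{DS-covering}), and reduce to the representation type of $T_2$ of truncated linear Nakayama algebras. For $n=4$: every finite convex subcategory of $T_2(\mathcal{L}_4)$ is an idempotent truncation of some $\mathbf{A}_2\otimes(k\overset{\rightarrow}{\mathbb{A}}_m/\rad^4)=T_2(k\overset{\rightarrow}{\mathbb{A}}_m/\rad^4)$, and these are tame (indeed domestic) by the classification of tame triangular matrix algebras in \cite{LS-tame-triangular-matrix, LS-tame-tensor-product}; hence $T_2(\mathcal{L}_4)$ is locally tame and $A\otimes B$ is tame. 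For $n\ge5$: the algebra $T_2(k\overset{\rightarrow}{\mathbb{A}}_m/\rad^5)$ with $m$ large enough is wild (again by that classification, or by exhibiting an explicit representation embedding of a wild algebra, e.g.\ via a one-point extension, into its module category), and it is a quotient of a finite convex subcategory of $T_2(\mathcal{L}_n)$ (namely of $T_2(k\overset{\rightarrow}{\mathbb{A}}_m/\rad^n)$, since $\rad^n\subseteq\rad^5$ there); hence $T_2(\mathcal{L}_n)$ is not locally tame and $A\otimes B$ is wild.

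The main obstacle is exactly this last step: \cite[Theorem~4]{LS-tame-triangular-matrix} only detects representation-finiteness, so separating tame from wild at the boundary $n=4/n=5$ forces an appeal to, or a re-proof of, the finer classification --- concretely, the two facts that $T_2(k\overset{\rightarrow}{\mathbb{A}}_m/\rad^4)$ is tame uniformly in $m$ while $T_2(k\overset{\rightarrow}{\mathbb{A}}_m/\rad^5)$ is wild for $m$ large. Everything else is bookkeeping with the ladder $T_2(\mathcal{L}_n)$ and the reductions already established in Section~3.
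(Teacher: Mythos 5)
Your route is genuinely different from the paper's. The paper treats $A\otimes B$ simply as a two-point algebra (quiver: a loop at each of the two vertices plus one connecting arrow) and reads off the whole trichotomy at once from the classification of representation-finite and tame two-point algebras in \cite{BG-covering} and \cite{HM-two-point}; the covering/ladder picture you develop only appears in the paper as an a posteriori remark, where the $\widetilde{\mathbb{E}}_7$ tame concealed subalgebra of the ladder re-proves representation-infiniteness for $n\ge 4$. Your finite/infinite dichotomy via \cite[Theorem 4]{LS-tame-triangular-matrix} and the family (IT) is in the spirit of that remark and is fine (modulo the unverified ``check the list (IT)'' step, which is a legitimate citation check and yields the same boundary $n\le 3$).

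The genuine gap is in the tame/wild refinement. Theorem \ref{theo::covering} (and Corollary \ref{cor::covering}) transfers only \emph{representation-finiteness} between an algebra and its Galois covering; your argument for $n=4$ concludes tameness of $A\otimes B$ from ``local tameness'' of the ladder $T_2(\mathcal{L}_4)$, and for $n\ge 5$ concludes wildness of $A\otimes B$ from wildness of a finite convex subcategory of $T_2(\mathcal{L}_n)$. Neither transfer is available from the paper's toolkit: the Dowbor--Skowro\'nski results in \cite{DS-covering} give the descent of tameness only under extra hypotheses (free action of a torsion-free group together with local support-finiteness of the covering), which you do not verify, and the wild direction needs the push-down functor to preserve indecomposables and reflect isomorphisms, which again must be invoked explicitly. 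Moreover a finite convex subcategory of the covering is \emph{not} a quotient of $A\otimes B$, so Proposition \ref{prop::quotient} cannot substitute for this. The fix is to drop the covering entirely in this part: for $n\ge 5$ the wild algebra $T_2(k[x]/(x^5))$ is an honest quotient of $A\otimes B$ (apply Proposition \ref{prop::quotient}), and for $n=4$ the algebra $A\otimes B=T_2(k[x]/(x^4))$ is itself one of the tame two-point algebras, so tameness (and the wildness for $n\ge5$) is exactly the statement of the classifications in \cite{HM-two-point} (or \cite{HM-selfinjective}, or \cite{LS-tame-triangular-matrix} applied to the base algebra $k[x]/(x^n)$ rather than to its covering) --- which is precisely the one-line proof the paper gives.
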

\begin{proof}
Since the quiver of $A\otimes B$ is displayed as 
$$
\xymatrix@C=0.8cm{\circ \ar@(dl,ul)^{\ }\ar[r]^{\ }& \circ\ar@(ur,dr)^{\ }},
$$
the statement follows the classification of two-point algebras in \cite{BG-covering} and \cite{HM-two-point}.
\end{proof}

We mention that Proposition \ref{prop::local-A2} is compatible with the classification of $2\times 2$ triangular matrix algebras in \cite{LS-tame-triangular-matrix}. In fact, the universal Galois covering $\widetilde{A}\otimes \widetilde{B}$ is given by 
$$
\xymatrix@C=1cm@R=0.7cm{
\cdots \ar[r]^-x\ar@{.}[dr]&\circ\ar[r]^-x\ar[d]^-y\ar@{.}[dr]&\bullet \ar[r]^-x\ar[d]^-y\ar@{.}[dr]&\bullet \ar[r]^-x\ar[d]^-y\ar@{.}[dr]&\bullet \ar[r]^-x\ar[d]^-y\ar@{.}[dr]&\bullet \ar[r]^-x\ar[d]^-y\ar@{.}[dr]&\cdots\\
\cdots \ar[r]^-x&\bullet\ar[r]^-x&\bullet\ar[r]^-x&\bullet\ar[r]^-x&\bullet\ar[r]^-x&\circ \ar[r]^-x&\cdots
}
$$
with $x^n=0$, $xy=yx$. If $n\ge 4$, the bullets in the above quiver give a tame concealed algebra of type $\widetilde{\mathbb{E}}_7$ such that $\widetilde{A}\otimes \widetilde{B}$ is not locally representation-finite. We then deduce that $A\otimes B$ is representation-infinite for any $n\ge 4$, by Corollary \ref{cor::covering}.

\begin{proposition}\label{prop::local-A3}
Suppose $A\simeq k[x]/(x^n)$ for some $n\ge 2$, and $B\simeq \mathbf{A}_3^{(++)}$ or $\mathbf{A}_3^{(+-)}$. Then, $A\otimes B$ is representation-finite if and only if $n=2$.
\end{proposition}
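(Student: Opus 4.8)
The plan is to handle both implications via the universal Galois covering. Since $\mathbf{A}_3^{\epsilon}$ is a tree algebra it equals its own universal Galois covering, so by the construction following Theorem \ref{theo::covering} the universal Galois covering of $A\otimes B$ is $\widetilde{A}\otimes\mathbf{A}_3^{\epsilon}$, where $\widetilde{A}$ is the infinite linear quiver $\cdots\to\circ\to\circ\to\cdots$ bounded by $x^{n}=0$, and $A\otimes B$ is representation-finite precisely when $\widetilde{A}\otimes\mathbf{A}_3^{\epsilon}$ is locally representation-finite. I will use the standard consequence of the covering theory underlying Theorem \ref{theo::covering} that a locally bounded category is locally representation-finite if and only if none of its finite convex subcategories is representation-infinite; this is the same mechanism used in the remark after Proposition \ref{prop::local-A2} and in the proof of Theorem \ref{theo::no-cycle}. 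Since $\mathbf{A}_3^{(-+)}\simeq\mathbf{A}_3^{(+-)}$ and $\mathbf{A}_3^{(--)}\simeq\mathbf{A}_3^{(++)}$, it suffices to treat $\epsilon\in\{(++),(+-)\}$.

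For $n=2$: any finite convex subcategory of $\widetilde{A}\otimes\mathbf{A}_3^{\epsilon}$ has its $\widetilde{A}$-coordinate supported on a finite interval, hence is a full (thus idempotent-truncated) subcategory of $N(m)\otimes\mathbf{A}_3^{\epsilon}$ for some $m\ge 1$, where $N(m)=k\overset{\rightarrow}{\mathbb{A}}_m/(\rad^{2}k\overset{\rightarrow}{\mathbb{A}}_m)$ is the convex subcategory of $\widetilde{A}$ on $m$ consecutive vertices. Now $N(1)\otimes\mathbf{A}_3^{\epsilon}=\mathbf{A}_3^{\epsilon}$ is hereditary of Dynkin type, $N(2)\otimes\mathbf{A}_3^{\epsilon}=\mathbf{A}_2\otimes\mathbf{A}_3^{\epsilon}$ is representation-finite by Theorem \ref{theorem-1}, and for $m\ge 3$ the algebra $N(m)\otimes\mathbf{A}_3^{\epsilon}$ is representation-finite by Theorem \ref{theorem-2}: case (3) when $\epsilon=(+-)$ (as $\mathbf{A}_3^{\epsilon}$ is then the path algebra of $\circ\to\circ\leftarrow\circ$), and case (2) when $\epsilon=(++)$ (as $\mathbf{A}_3^{\epsilon}$ is then Nakayama with $\rad^{3}=0$ and, having three vertices, contains none of $\mathbf{B}_1,\mathbf{B}_2,\mathbf{B}_3$). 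By Proposition \ref{prop::quotient}(2) idempotent truncations of these remain representation-finite, so $\widetilde{A}\otimes\mathbf{A}_3^{\epsilon}$ has no representation-infinite finite convex subcategory, whence $A\otimes B$ is representation-finite.

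For $n\ge 3$: because $k[x]/(x^{3})$ is a quotient of $k[x]/(x^{n})$, the algebra $k[x]/(x^{3})\otimes B$ is a quotient of $k[x]/(x^{n})\otimes B$, so by Proposition \ref{prop::quotient}(1) it suffices to take $n=3$. Then the full subcategory of $\widetilde{A}$ on any three consecutive vertices carries no surviving relation (a path of length $3$ is needed to hit $x^{3}=0$), hence equals the path algebra $\mathbf{A}_3^{(++)}$; so $\mathbf{A}_3^{(++)}\otimes\mathbf{A}_3^{\epsilon}$ is a finite convex subcategory of $\widetilde{A}\otimes\mathbf{A}_3^{\epsilon}$. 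Each of its two tensor factors is a path algebra of type $\mathbb{A}_3$, hence trivially has such an algebra as a quotient, so $\mathbf{A}_3^{(++)}\otimes\mathbf{A}_3^{\epsilon}$ is representation-infinite by Proposition \ref{prop::3*3}; therefore $\widetilde{A}\otimes\mathbf{A}_3^{\epsilon}$ is not locally representation-finite and $A\otimes B$ is representation-infinite.

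The delicate points all sit in the $n=2$ half: one must state the criterion ``locally representation-finite $\iff$ no representation-infinite finite convex subcategory'' precisely inside the covering framework of Section 2, and one must check carefully that $N(m)\otimes\mathbf{A}_3^{\epsilon}$ falls under the correct case of Theorem \ref{theorem-2} for both orientations and every $m\ge 3$. The $n\ge 3$ half is short once one notices that unfolding the loop of $k[x]/(x^{3})$ exposes a copy of $\mathbf{A}_3^{(++)}$, after which Proposition \ref{prop::3*3} applies immediately.
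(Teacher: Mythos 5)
Your proof is correct and takes essentially the same route as the paper: pass to the universal Galois covering $\widetilde{A}\otimes B$ (noting $\widetilde{B}=B$), for $n=2$ identify its finite idempotent truncations with truncations of $N(m)\otimes\mathbf{A}_3^{\epsilon}$ and quote the representation-finiteness of the latter (the paper cites \cite{LS-tame-tensor-product} and \cite{MW21} directly, you read it off cases (2) and (3) of the quoted main theorem of \cite{MW21} --- same content), and for $n\ge 3$ unfold the loop to expose $\mathbf{A}_3^{(++)}$ in $\widetilde{A}$ and conclude via Proposition \ref{prop::3*3} and Corollary \ref{cor::covering}. One cosmetic slip, irrelevant to the statement since it only concerns $\epsilon\in\{(++),(+-)\}$: $\mathbf{A}_3^{(-+)}$ is the opposite algebra of $\mathbf{A}_3^{(+-)}$, not isomorphic to it.
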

\begin{proof}
We notice that $\widetilde{B}=B$ since $B$ is simply connected. If $n=2$, then each idempotent truncation of $\widetilde{A}\otimes B$ is isomorphic to some idempotent truncation of $N(s)\otimes B$, for some $s\ge 2$. It is shown in \cite[Theorem 6.1]{LS-tame-tensor-product} (resp., \cite[Lemma 3.4]{MW21}) that $N(s)\otimes \mathbf{A}_3^{(++)}$ (resp., $N(s)\otimes \mathbf{A}_3^{(+-)}$) is representation-finite, it turns out that $\widetilde{A}\otimes B$ is locally representation-finite. If $n\ge 3$, then $\widetilde{A}$ contains $\mathbf{A}_3^{(++)}$ as a quotient such that $\widetilde{A}\otimes B$ is not locally representation-finite by Proposition \ref{prop::3*3}. We then obtain the statement by Corollary \ref{cor::covering}.
\end{proof}

\begin{proposition}\label{prop::local-N(m)}
Suppose $A\simeq k[x]/(x^n)$ for some $n\ge 2$, and $B\simeq N(m)$ for some $m\ge 3$. Then, $A\otimes B$ is representation-finite if and only if $n=2$.
\end{proposition}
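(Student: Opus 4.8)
The plan is to apply the Galois covering criterion of Corollary \ref{cor::covering} in both directions. As recalled at the start of this section, the universal Galois covering of $A=k[x]/(x^n)$ is the infinite linear quiver bounded by $x^n=0$, which I denote by $\widetilde{A}$; since $N(m)$ is a tree algebra, it is simply connected, so its universal Galois covering is $N(m)$ itself. Hence $\widetilde{A}\otimes N(m)$ is the universal Galois covering of $A\otimes N(m)$, and by Corollary \ref{cor::covering} it suffices to decide for which $n$ the locally bounded category $\widetilde{A}\otimes N(m)$ is locally representation-finite. Two elementary facts will be used: the idempotent truncation of $\widetilde{A}$ to any $s$ consecutive vertices --- equivalently, the quotient of $\widetilde{A}$ by the ideal generated by the idempotents of the remaining vertices --- is isomorphic to $k\overset{\rightarrow}{\mathbb{A}}_s/(\rad^n k\overset{\rightarrow}{\mathbb{A}}_s)$; and $N(3)$ is a quotient of $N(m)$ for every $m\ge 3$, obtained by deleting the vertices $4,\dots,m$.

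In the case $n=2$ I would argue exactly as in the proof of Proposition \ref{prop::local-A3}. Every idempotent truncation of $\widetilde{A}\otimes N(m)$ to a finite set of vertices is isomorphic to an idempotent truncation of $N(s)\otimes N(m)=\bigl(k\overset{\rightarrow}{\mathbb{A}}_s/(\rad^2 k\overset{\rightarrow}{\mathbb{A}}_s)\bigr)\otimes N(m)$ for some $s$, and one may take $s\ge 3$. Since $N(s)\otimes N(m)$ is representation-finite for all $s,m\ge 3$ by \cite{MW21}, every one of its idempotent truncations is representation-finite by Proposition \ref{prop::quotient}(2); hence $\widetilde{A}\otimes N(m)$ is locally representation-finite, and $A\otimes N(m)$ is representation-finite.

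In the case $n\ge 3$ I would prove the sharper statement that $k[x]/(x^3)\otimes N(3)$ is already representation-infinite. Since $k[x]/(x^3)$ is a quotient of $k[x]/(x^n)$ and $N(3)$ is a quotient of $N(m)$, the algebra $k[x]/(x^3)\otimes N(3)$ is a quotient of $A\otimes N(m)$, so representation-infiniteness propagates upward by Proposition \ref{prop::quotient}(1). The universal Galois covering of $k[x]/(x^3)\otimes N(3)$ is $\widetilde{A}\otimes N(3)$ with $\widetilde{A}$ now bounded by $x^3=0$, and the crucial observation is that the quotient of this $\widetilde{A}$ by the ideal generated by the idempotents of all but five consecutive vertices equals $k\overset{\rightarrow}{\mathbb{A}}_5/(\rad^3 k\overset{\rightarrow}{\mathbb{A}}_5)=\mathbf{B}_1$ of \eqref{B_1}. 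Therefore $\widetilde{A}\otimes N(3)$ admits $\mathbf{B}_1\otimes N(3)\simeq N(3)\otimes \mathbf{B}_1$ as a quotient algebra. Now $\mathbf{B}_1$ is a representation-finite simply connected Nakayama algebra with $|\mathbf{B}_1|=5$, $\rad^2\mathbf{B}_1\neq 0$ and $\rad^3\mathbf{B}_1=0$, and it obviously contains itself as a quotient, so by \cite{MW21} (case (2) with $n=3$) the algebra $N(3)\otimes\mathbf{B}_1$ is \emph{not} representation-finite. As the module category of a quotient algebra is a full subcategory preserving indecomposability and support, this yields infinitely many pairwise non-isomorphic indecomposable $\widetilde{A}\otimes N(3)$-modules, all supported on the finite set of vertices underlying $\mathbf{B}_1\otimes N(3)$, so $\widetilde{A}\otimes N(3)$ is not locally representation-finite. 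By Corollary \ref{cor::covering}, $k[x]/(x^3)\otimes N(3)$, and hence $A\otimes N(m)$, is representation-infinite.

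The step that needs the most care is the passage between the infinite-dimensional covering $\widetilde{A}\otimes N(m)$ and genuinely finite-dimensional algebras. For $n=2$ this is the check that every finite idempotent truncation is (a truncation of) the known representation-finite algebra $N(s)\otimes N(m)$, so that the input from \cite{MW21} applies; for $n\ge 3$ it is the identification of the finite quotient $N(3)\otimes\mathbf{B}_1$ inside the covering, together with the fact that \cite{MW21} already places it outside the representation-finite range. Everything else is routine bookkeeping with Corollary \ref{cor::covering}, Proposition \ref{prop::quotient}, and the behaviour of quotients and idempotent truncations.
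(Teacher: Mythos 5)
Your proof is correct and follows essentially the same route as the paper: pass to the universal Galois covering $\widetilde{A}\otimes N(m)$, identify its finite idempotent truncations with (truncations of) $N(s)\otimes N(m)$ to get local representation-finiteness when $n=2$, and exhibit $\mathbf{B}_1$ as a quotient of $\widetilde{A}$ together with the representation-infiniteness of $\mathbf{B}_1\otimes N(3)$ to conclude when $n\ge 3$. The only cosmetic differences are that the paper invokes the special biserial result of [ABM11] for the representation-finiteness of $N(s)\otimes N(t)$ rather than [MW21], and reads $\mathbf{B}_1$ off directly from $\widetilde{A}$ without first passing to the quotient $k[x]/(x^3)$.
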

\begin{proof}
Since $B$ is simply connected, we have $\widetilde{B}=B$. 
If $n=2$, then each non-simple idempotent truncation of $\widetilde{A}\otimes B$ is isomorphic to $N(s)$ or $N(s)\otimes N(t)$ for some $s, t\ge 2$ and $t\le |B|$. Since $N(s)\otimes N(t)$ is a representation-finite special biserial algebra shown in \cite{ABM}, we conclude that $\widetilde{A}\otimes B$ is locally representation-finite. Using Corollary \ref{cor::covering}, $A\otimes B$ is representation-finite.

If $n\ge 3$, then $\widetilde{A}$ contains
$$
\mathbf{B}_1=k\left(
\xymatrix@C=1cm@R=0.5cm{
\circ\ar[r]^-\alpha&\circ\ar[r]^-\beta&\circ\ar[r]^-\gamma&\circ\ar[r]^-\delta&\circ}
\right)
\bigg/
\langle \alpha\beta\gamma, \beta\gamma\delta \rangle
$$
as a quotient algebra. It follows from Theorem \ref{theorem-2} (2) that $\mathbf{B}_1\otimes N(3)$ is representation-infinite, then $\widetilde{A}\otimes B$ is not locally representation-finite. Hence, $A\otimes B$ is representation-infinite following Corollary \ref{cor::covering}.
\end{proof}

\begin{theorem}\label{theo::local-A}
Let $A\simeq k[x]/(x^n)$ with some $n\ge 2$. Suppose $|B|\ge 2$ and the quiver of $B$ is of type $\mathbb{A}$. Then, $A\otimes B$ is representation-finite if and only if one of the following holds.
\begin{enumerate}
\item[\rm{(1)}] $n=2,3$ and $B\simeq \mathbf{A}_2$.
\item[\rm{(2)}] $n=2$ and 
\begin{itemize}
\item $B$ is isomorphic to one of $\mathbf{A}_3^{(++)}$, $\mathbf{A}_3^{(+-)}$, $\mathbf{A}_3^{(-+)}$, $\mathbf{B}_5$, $\mathbf{B}_5^{\op}$, where
$$
\mathbf{B}_5=k\left(
\xymatrix@C=1cm@R=0.5cm{
\circ\ar[r]^-\alpha&\circ&\circ\ar[l]_-\beta&\circ\ar[l]_-\gamma}
\right)
\bigg/
\langle \gamma\beta \rangle.
$$

\item $B$ is a Nakayama algebra having no 
$$
\mathbf{B}_3=k\left(
\xymatrix@C=1cm@R=0.5cm{
\circ\ar[r]^-\alpha&\circ\ar[r]^-\beta&\circ\ar[r]^-\gamma&\circ}
\right)
\bigg/
\langle\alpha\beta\gamma\rangle
$$
as a quotient algebra.
\end{itemize}
\end{enumerate}
\end{theorem}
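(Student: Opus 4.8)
The plan is to run the entire argument through the Galois covering reduction of Corollary \ref{cor::covering}. Since the quiver of $B$ has underlying graph of type $\mathbb{A}$, it is a tree, so $B$ is simply connected and $\widetilde{B}=B$; hence $A\otimes B$ is representation-finite if and only if $\widetilde{A}\otimes B$ is locally representation-finite, where $\widetilde{A}$ is the infinite linear quiver bound by $x^n=0$. Following the pattern of the proofs of Propositions \ref{prop::local-A2}--\ref{prop::local-N(m)}, I would read this off the finite idempotent truncations of $\widetilde{A}\otimes B$: every such truncation is a further truncation of $(k\overset{\rightarrow}{\mathbb{A}}_s/\rad^n k\overset{\rightarrow}{\mathbb{A}}_s)\otimes B$ for $s$ large, and each $k\overset{\rightarrow}{\mathbb{A}}_s/\rad^n k\overset{\rightarrow}{\mathbb{A}}_s$ is itself a truncation of $\widetilde{A}$, so by Proposition \ref{prop::quotient} the algebra $\widetilde{A}\otimes B$ is locally representation-finite if and only if $(k\overset{\rightarrow}{\mathbb{A}}_s/\rad^n k\overset{\rightarrow}{\mathbb{A}}_s)\otimes B$ is representation-finite for every $s\ge 2$. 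When $n=2$ these are precisely the algebras $N(s)\otimes B$, so Theorem \ref{theorem-2} applies directly.

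For the ''only if'' direction I would first truncate $B$ to two adjacent vertices, producing $\mathbf{A}_2$ as a truncation of $B$ and hence $A\otimes\mathbf{A}_2$ as a truncation of $A\otimes B$; Proposition \ref{prop::local-A2} then forces $n\le 3$. If $n=3$ and $|B|\ge 3$, truncating $B$ to three consecutive vertices gives a three-point algebra of type $\mathbb{A}$, necessarily one of the algebras $\mathbf{A}_3^{\epsilon}$ or $N(3)$, and Propositions \ref{prop::local-A3} and \ref{prop::local-N(m)} (together with invariance of representation type under the passage to opposite algebras, using that $k[x]/(x^3)$ is commutative) show $A\otimes B$ is representation-infinite; so $n=3$ forces $|B|=2$, that is $B\simeq\mathbf{A}_2$, which is case (1). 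If $n=2$, I would invoke the reformulation above for $s=4$: $N(4)\otimes B$ is representation-finite, and for $|B|\ge 3$ the only options left by Theorem \ref{theorem-2} are $B$ Nakayama without a $\mathbf{B}_3$-quotient, or $B\in\{\mathbf{A}_3^{(+-)},\mathbf{A}_3^{(-+)},\mathbf{B}_5,\mathbf{B}_5^{\op}\}$; moreover, were $B$ Nakayama with a $\mathbf{B}_3$-quotient, tensoring the corresponding surjection with $N(4)$ would yield $N(4)\otimes\mathbf{B}_3$ as a subquotient, which is representation-infinite by Theorem \ref{theorem-2}. The remaining case $|B|=2$ gives $B\simeq\mathbf{A}_2$, a Nakayama algebra with no $\mathbf{B}_3$-quotient, so (2) holds.

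For the ''if'' direction, case (1) is Proposition \ref{prop::local-A2}. For case (2) I must check $N(s)\otimes B$ is representation-finite for every $s\ge 2$. If $B$ is one of $\mathbf{A}_3^{(+-)},\mathbf{A}_3^{(-+)},\mathbf{B}_5,\mathbf{B}_5^{\op}$, these appear in the clauses of Theorem \ref{theorem-2}(3) that impose no condition on the first factor, handling $s\ge 3$, and $s=2$ follows since $N(2)\otimes B$ is a truncation of $N(3)\otimes B$. If $B$ is Nakayama with no $\mathbf{B}_3$-quotient, I would first record two combinatorial facts: a nonzero path of length $3$ in $B$ would make the restriction of $B$ to the four vertices it meets the full path algebra of $\overset{\rightarrow}{\mathbb{A}}_4$, which surjects onto $\mathbf{B}_3$ --- so $\rad^3 B=0$; and since $\mathbf{B}_1$ and $\mathbf{B}_2$ both have $\mathbf{B}_3$ as a quotient, $B$ has neither. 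Then $B$ meets the hypotheses of Theorem \ref{theorem-2}(1) (if $\rad^2 B=0$) or (2), so $N(s)\otimes B$ is representation-finite for all $s\ge 3$, with $s=2$ as above; the trivial case $|B|\le 2$ is covered by Proposition \ref{prop::local-A2}. Corollary \ref{cor::covering} then finishes the proof.

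The crux is the bookkeeping in the $n=2$ case: Theorem \ref{theorem-2} must be applied for all widths $s$ at once, and the point is that the additional algebras allowed by its ''$n=3$'' clauses disappear as soon as $s\ge 4$, so the decisive constraint is the representation-finiteness of $N(4)\otimes B$. One must also confirm that ''$B$ Nakayama without a $\mathbf{B}_3$-quotient'' is exactly the condition making all the hypotheses of Theorem \ref{theorem-2}(2) automatic --- that is, that $\rad^3 B=0$ and the absence of $\mathbf{B}_1$ and $\mathbf{B}_2$ come for free --- which is the short combinatorial observation recorded above. A minor point, routine within this paper's framework, is that local representation-finiteness of $\widetilde{A}\otimes B$ is detected on its finite idempotent truncations.
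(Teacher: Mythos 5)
Your proposal is correct and follows essentially the same route as the paper: reduce the small cases via Propositions \ref{prop::local-A2}--\ref{prop::local-N(m)}, pass to the universal Galois covering via Corollary \ref{cor::covering}, identify the finite idempotent truncations of $\widetilde{A}\otimes B$ with truncations of $N(s)\otimes B$, and read off representation-finiteness from Theorem \ref{theorem-2}. Your write-up merely makes explicit some bookkeeping the paper leaves implicit (e.g.\ that a Nakayama algebra of type $\mathbb{A}$ with no $\mathbf{B}_3$-quotient automatically has $\rad^3 B=0$ and no $\mathbf{B}_1$-, $\mathbf{B}_2$-quotients, and the detection of local representation-finiteness on finite truncations), at the same level of rigor as the paper itself.
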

\begin{proof}
According to Proposition \ref{prop::local-A2}, Proposition \ref{prop::local-A3} and Proposition \ref{prop::local-N(m)}, it suffices to consider the cases with $|B|\ge 4$ and $B\not\simeq N(m)$. Then, $B$ contains $\mathbf{A}_3^{(++)}$ or $\mathbf{A}_3^{(+-)}$ as a quotient algebra and $A\otimes B$ is representation-infinite if $n\ge 3$, by Proposition \ref{prop::local-A3}. We then assume $n=2$ in the following.

Note that $\widetilde{B}=B$. It is clear that each idempotent truncation of $\widetilde{A}\otimes B$ is isomorphic to some idempotent truncation of $N(s)\otimes B$, for some $s\ge 2$. If $B$ is a Nakayama algebra, then it is mentioned in Theorem \ref{theorem-2} (2) that $N(s)\otimes B$ with $s\ge 4$ is representation-finite if and only if $B$ does not contain $\mathbf{B}_3$ as a quotient algebra. If $B$ is not a Nakayama algebra, then it is shown in Theorem \ref{theorem-2} (3) that $N(s)\otimes B$ with $s\ge 3$ is representation-finite if and only if $B$ is isomorphic to $\mathbf{B}_5$ or $\mathbf{B}_5^{\op}$. In both cases, it is easy to check the locally representation-finiteness of $\widetilde{A}\otimes B$, we omit the details.
\end{proof}

\section{Two-point algebras}
We consider in this section the case $A\otimes B$ with $A,B\not\simeq \mathbf{A}_2$ and $|A|=2$. In fact, the representation-finiteness of $\mathbf{A}_2\otimes C$ for any algebra $C$ is completely determined in \cite{LS-tame-triangular-matrix} as we mentioned in Theorem \ref{theorem-1}. According to the classification of maximal representation-finite two-point algebras in \cite{BG-covering}, the quiver of $A$ is one of the following quivers:
$$
\vcenter{\xymatrix@C=0.8cm{\circ \ar@<0.5ex>[r]^{\ }&\circ \ar@<0.5ex>[l]^{\ }}}\qquad
\vcenter{\xymatrix@C=0.8cm{\circ \ar[r]^{\ }& \circ \ar@(ur,dr)^{\ }}}\quad 
\vcenter{\xymatrix@C=0.8cm{\circ \ar@(dl,ul)^{\ }\ar[r]^{\ }& \circ \ar@(ur,dr)^{\ }}}\quad 
\vcenter{\xymatrix@C=0.8cm{\circ \ar@<0.5ex>[r]^{\ }\ar@(dl,ul)^{\ }&\circ \ar@<0.5ex>[l]^{\ }\ar@(ur,dr)^{\ }}}
$$
Using Theorem \ref{theo::no-cycle} and Theorem \ref{theo::B-type-D}, we assume that the underlying graph of the quiver $Q_B$ of $B$ is of type $\mathbb{A}$, and $|B|\ge 3$.

We have the following easy observation, and this will reduce the general problem to a specific case, i.e., the case $Q_A: \vcenter{\xymatrix@C=0.8cm{\circ \ar@<0.5ex>[r]^{\ }&\circ \ar@<0.5ex>[l]^{\ }}}$.
\begin{proposition}
If the quiver of $A$ contains 
$$
\vcenter{\xymatrix@C=0.8cm{\circ \ar[r]^{\ }& \circ \ar@(ur,dr)^{\ }}}
$$
as a subquiver, then $A\otimes N(3)$ is representation-infinite.
\end{proposition}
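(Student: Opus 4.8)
The plan is to reduce, by means of Proposition \ref{prop::quotient}, to a radical square zero algebra and then apply the separated quiver criterion of Proposition \ref{prop::rad-square-zero}. Let $1,2$ denote the two vertices of the prescribed subquiver, with $\alpha\colon 1\to 2$ the arrow and $x$ the loop at $2$. First I would replace $A$ by a small quotient: putting $e:=\sum_{i\in (Q_A)_0\setminus\{1,2\}}e_i$, the algebra $A/AeA$ is a bound quiver algebra whose quiver is the full subquiver of $Q_A$ supported on $\{1,2\}$, and in particular it still contains $\alpha$ and the loop $x$. Killing, in addition, every remaining arrow and the square of the radical, we find that $A$ admits
\[
\bar A:=k\left(\xymatrix@C=1cm{1\ar[r]^-{\alpha}&2\ar@(ur,dr)^{x}}\right)\big/\langle \alpha x,\, x^{2}\rangle=kQ^{\flat}/\rad^{2}kQ^{\flat}
\]
as a quotient algebra, where $Q^{\flat}$ is the quiver with vertices $1,2$, one arrow $1\to 2$ and one loop at $2$. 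Tensoring the surjection $A\twoheadrightarrow\bar A$ with the identity of $N(3)$ gives a surjection $A\otimes N(3)\twoheadrightarrow\bar A\otimes N(3)$, so by Proposition \ref{prop::quotient}(1) it suffices to prove that $\bar A\otimes N(3)$ is representation-infinite.

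Writing $N(3)=k(\xymatrix@C=0.7cm{1'\ar[r]^-{a}&2'\ar[r]^-{b}&3'})/\langle ab\rangle$, the quiver $Q$ of $\bar A\otimes N(3)$ has the six vertices $(i,j')$, the arrows $(i,1')\to(i,2')\to(i,3')$ for $i=1,2$, the arrows $(1,j')\to(2,j')$ for $j'=1',2',3'$, and one loop at each of $(2,1'),(2,2'),(2,3')$. Passing to the radical square zero quotient $\Gamma:=(\bar A\otimes N(3))/\rad^{2}\cong kQ/\rad^{2}kQ$, Proposition \ref{prop::rad-square-zero} reduces the problem to showing that the separated quiver $Q^{s}$ of $Q$ is not a disjoint union of Dynkin quivers. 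This is where the loop does its work: the three arrows of $Q$ ending at $(2,2')$ (coming from $(1,2')$, from $(2,1')$, and the loop) give three arrows into $(2,2')^{-}$ in $Q^{s}$, so $(2,2')^{-}$ is trivalent there, and likewise $(2,3')^{-}$ is trivalent; since $(2,2')^{+}$ is joined in $Q^{s}$ to both $(2,2')^{-}$ and $(2,3')^{-}$, these two trivalent vertices lie in a single connected component, which therefore contains a full subquiver of type $\widetilde{\mathbb{D}}_{6}$ (two ``forks'' joined by a path of length two). Hence $Q^{s}$ is not a disjoint union of Dynkin quivers, $\Gamma$ is representation-infinite, and therefore so are $\bar A\otimes N(3)$ and $A\otimes N(3)$.

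The only step requiring care is the first reduction: since $A$ may have many vertices and may carry further arrows between $1$ and $2$, one cannot simply restrict attention to the idempotent truncation $(e_{1}+e_{2})A(e_{1}+e_{2})$ (which is not a quotient of $A$); instead one passes to the genuine quotient $A/AeA$ and then kills the superfluous arrows, which pins down the algebra to the concrete $\bar A$ above. Everything afterwards is routine — the separated quiver of $\bar A\otimes N(3)$ is small, and its failure to be a union of Dynkin quivers is caused precisely by the loop of $Q^{\flat}$ interacting with the two arrows of $N(3)$ to produce a second branch vertex.
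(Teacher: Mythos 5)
Your proof is correct and follows essentially the same route as the paper: pass to a radical square zero quotient and apply the separated-quiver criterion (Proposition \ref{prop::rad-square-zero}) to $A\otimes N(3)$. The only cosmetic difference is the Euclidean witness — the paper exhibits a full subquiver of type $\widetilde{\mathbb{E}}_6$ (the three arrows into $(2,2')^-$ extended by one more step each), while you exhibit a $\widetilde{\mathbb{D}}_6$ through the two branch vertices $(2,2')^-$ and $(2,3')^-$ — and your explicit reduction to $\bar A$ is a careful spelling-out of a step the paper leaves implicit.
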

\begin{proof}
It is easy to check that the separated quiver of $A\otimes N(3)$ contains 
$$
\vcenter{\xymatrix@C=0.7cm@R=0.1cm{
-&+\ar[l]\ar[dr]&&&\\
&&-&+\ar[l]\ar[r]&-\\
-&+\ar[l]\ar[ur]&&&
}}
$$
as a subquiver, which gives a tame concealed algebra of type $\widetilde{\mathbb{E}}_6$.
\end{proof}

Recall that 
$$
\Delta_2=\xymatrix@C=0.8cm{1\ar@<0.5ex>[r]^{\alpha }&2 \ar@<0.5ex>[l]^{\beta }}.
$$
The covering of $\Delta_2$ is given by
$$
\xymatrix@C=0.7cm@R=0.5cm{
\cdots\ar[r]&1\ar[r]^-\alpha&2 \ar[r]^-\beta&1\ar[r]^-\alpha&2 \ar[r]^-\beta&1\ar[r]&\cdots}.
$$

\begin{theorem}\label{theo::NO(2)-A}
Let $Q_A=\Delta_2$. Suppose the quiver $Q_B$ of $B$ is of type $\mathbb{A}$ and $|B|\ge 3$. Then, $A\otimes B$ is representation-finite if and only if one of the followings hold:
\begin{itemize}
\item $A\simeq k\Delta_2/\langle \beta\alpha\rangle$ and $B\simeq N(n)$ for some $n\ge 3$.

\item $A\simeq N^\circ(2)$ and $B$ is isomorphic to one of $\mathbf{A}_3^{(++)}$, $\mathbf{A}_3^{(+-)}$, $\mathbf{A}_3^{(-+)}$, $\mathbf{B}_5$, $\mathbf{B}_5^{\op}$, or $B$ is a Nakayama algebra having no $\mathbf{B}_3$ as a quotient algebra.
\end{itemize}
\end{theorem}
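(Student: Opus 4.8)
The plan is to apply the covering criterion of Corollary~\ref{cor::covering} together with Theorem~\ref{theorem-2} and Proposition~\ref{prop::3*3}, after first pinning down the possibilities for $A$. Since $Q_A=\Delta_2$ and $k\Delta_2$ is infinite-dimensional, $A=k\Delta_2/I$ for an admissible ideal $I$; as between any two vertices of $\Delta_2$ there is at most one path of each length, $I$ is automatically monomial, and by Proposition~\ref{prop::Nakayama} every such $A$ is a Nakayama algebra. Classifying these by their Kupisch series shows that, up to isomorphism, $A$ is one of: $N^\circ(2)$; the algebra $k\Delta_2/\langle\beta\alpha\rangle$; the $6$-dimensional algebra $\Lambda:=k\Delta_2/\langle\alpha\beta\alpha,\beta\alpha\beta\rangle$; and the algebras with $\rad^3 A\neq 0$. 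Moreover $B$ has a tree quiver, hence is simply connected, so $\widetilde B=B$ and $\widetilde{A\otimes B}=\widetilde A\otimes B$; thus $A\otimes B$ is representation-finite if and only if $\widetilde A\otimes B$ is locally representation-finite, and in particular representation-finiteness of $A\otimes B$ forces every finite idempotent truncation of $\widetilde A\otimes B$ to be representation-finite.

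The first step is to discard the large algebras $A$. If $\rad^3 A\neq 0$, then the covering $\widetilde A$ is an infinite line having, along four suitable consecutive vertices, the hereditary algebra $\mathbf A_4^{(+++)}$ as an idempotent truncation; since $\mathbf A_4^{(+++)}$ is simply connected with four vertices and $\rad^3\neq 0$, it is not of the form $N(n)$, so by Theorem~\ref{theorem-2} (or Proposition~\ref{prop::3*3}) the algebra $\mathbf A_4^{(+++)}\otimes B'$ is representation-infinite for every algebra $B'$ with $|B'|\ge 3$ whose quiver is of type $\mathbb A$. Taking $B'$ a three-vertex convex subcategory of $B$ exhibits a representation-infinite finite truncation of $\widetilde A\otimes B$, so $A\otimes B$ is representation-infinite by Corollary~\ref{cor::covering}. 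For $\Lambda$ the argument is identical, with $\widetilde\Lambda\simeq k\overset{\rightarrow}{\mathbb{A}}_\infty/\rad^3$, which has $\mathbf B_1$ (five consecutive vertices) as an idempotent truncation, and $\mathbf B_1\otimes B'$ is again representation-infinite for every such $B'$ by Theorem~\ref{theorem-2}. Hence $A\otimes B$ can be representation-finite only if $A\simeq N^\circ(2)$ or $A\simeq k\Delta_2/\langle\beta\alpha\rangle$.

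For $A\simeq k\Delta_2/\langle\beta\alpha\rangle$ the covering $\widetilde A$ is the infinite line in which the composition of two consecutive arrows is zero at every second vertex and nonzero at the others; consequently $\widetilde A$ has $\mathbf A_3^{(++)}$ as an idempotent truncation, every path of length $3$ in $\widetilde A$ vanishes, and no finite idempotent truncation of $\widetilde A$ admits $\mathbf B_1$, $\mathbf B_2$ or $\mathbf B_3$ as a quotient algebra, since each of the latter has at most one relation of length $2$ whereas the interval of $\widetilde A$ of the corresponding size carries strictly more. If $B\not\simeq N(m)$, then, $Q_B$ being of type $\mathbb A$, $B$ has a convex subcategory isomorphic to some $\mathbf A_3^\epsilon$: a directed $\mathbf A_3^{(++)}$ when $B$ is Nakayama with $\rad^2 B\neq 0$, and $\mathbf A_3^{(+-)}$ or $\mathbf A_3^{(-+)}$ around an interior source or sink otherwise; hence $\widetilde A\otimes B$ has $\mathbf A_3^{(++)}\otimes\mathbf A_3^\epsilon$ as an idempotent truncation, which is representation-infinite by Proposition~\ref{prop::3*3}, so $A\otimes B$ is representation-infinite. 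Conversely, if $B\simeq N(m)$ with $m\ge 3$, then every finite idempotent truncation of $\widetilde A\otimes N(m)$ has the form $\mathcal N\otimes N(m')$ with $m'\le m$ and $\mathcal N$ a finite idempotent truncation of $\widetilde A$, that is, a representation-finite simply connected Nakayama algebra with $\rad^3=0$ containing none of $\mathbf B_1,\mathbf B_2,\mathbf B_3$ as a quotient; by Theorem~\ref{theorem-2}, together with the immediate cases $|\mathcal N|\le 2$ or $m'\le 2$ (where one obtains an algebra of the form $N(s)$ or $\mathbf A_2\otimes N(s)$), each such $\mathcal N\otimes N(m')$ is representation-finite, so $\widetilde A\otimes N(m)$ is locally representation-finite and $A\otimes N(m)$ is representation-finite.

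Finally, for $A\simeq N^\circ(2)$ I would use that $N^\circ(2)$ and $k[x]/(x^2)$ share the same universal Galois covering (the infinite line bounded by $x^2=0$); hence $N^\circ(2)\otimes B$ and $k[x]/(x^2)\otimes B$ have the common universal Galois covering $\widetilde{k[x]/(x^2)}\otimes B$, so they have the same representation type, and the list of admissible $B$ is precisely the one furnished by Theorem~\ref{theo::local-A} for $n=2$ and $|B|\ge 3$. Combining the two subcases for $A$ then gives the theorem. The step I expect to be the main obstacle is the converse half of the third paragraph, namely showing that $A\otimes N(m)$ is genuinely representation-finite: this rests on the precise alternating shape of the relations in the covering of $k\Delta_2/\langle\beta\alpha\rangle$ and on the resulting absence of $\mathbf B_1$, $\mathbf B_2$, $\mathbf B_3$ among its finite idempotent truncations; once that structural fact is secured, the remaining arguments are bookkeeping with Theorem~\ref{theorem-2}, Proposition~\ref{prop::3*3} and Corollary~\ref{cor::covering}.
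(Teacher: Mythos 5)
Your proposal is correct and follows essentially the same route as the paper: pass to the universal Galois covering of $A$ (an infinite line), reduce to Theorem \ref{theorem-2} and Proposition \ref{prop::3*3} via finite idempotent truncations, and handle $A\simeq N^\circ(2)$ by identifying its covering with that of $k[x]/(x^2)$ and citing Theorem \ref{theo::local-A}. The only differences are cosmetic — you split the case $\alpha\beta\neq 0$, $\beta\alpha\neq 0$ into $\rad^3 A\neq 0$ (handled via $\mathbf{A}_4^{(+++)}$) and the six-dimensional algebra (handled via $\mathbf{B}_1$), where the paper treats both at once through $\mathbf{B}_1$, and you spell out more carefully the step where $B\not\simeq N(m)$ yields some $\mathbf{A}_3^{\epsilon}$.
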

\begin{proof}
If $\alpha\beta\neq 0$ and $\beta\alpha\neq 0$, then $\widetilde{A}$ contains $\mathbf{B}_1$ as a quotient and $\mathbf{B}_1\otimes N(3)$ is not locally representation-finite, by Theorem \ref{theorem-2} (2). Hence, $A\otimes B$ is representation-infinite.

If $\alpha\beta\neq 0$ and $\beta\alpha=0$, i.e., $A\simeq k\Delta_2/\langle \beta\alpha\rangle$, then the universal Galois covering $\widetilde{A}$ of $A$ contains $\mathbf{A}_3^{(++)}$ as a quotient. If $B\not\simeq N(n)$, then $B$ must contain $\mathbf{A}_3^{(++)}$ or $\mathbf{A}_3^{(+-)}$ or $\mathbf{A}_3^{(-+)}$ as a quotient, and it follows from Proposition \ref{prop::3*3} that $\widetilde{A}\otimes B$ is not locally representation-finite. If $B\simeq N(n)$ for some $n\ge 3$, then $A\otimes B$ is representation-finite since $\widetilde{A}$ contains no $\mathbf{B}_3$ as a quotient, see Theorem \ref{theorem-2} (2).

Suppose $A\simeq N^\circ(2)$. Since $N^\circ(2)$ shares the same universal Galois covering with $N^\circ(1)$, the result follows from Theorem \ref{theo::local-A}.
\end{proof}

\section{Nakayama algebras}
In this section, we consider the case $A\otimes B$ with $|A|\ge 3$ and $|B|\ge 3$. If both $A$ and $B$ are not Nakayama algebras with radical square zero, then one may construct a surjection from $A\otimes B$ to $\mathbf{A}_3^\epsilon\otimes \mathbf{A}_3^\omega$ for some choices of $\epsilon$ and $\omega$ (see Proposition \ref{prop::Nakayama}), such that $A\otimes B$ is representation-infinite according to Proposition \ref{prop::3*3}. Hence, we may assume that one of $A$ and $B$ is isomorphic to $N(n)$ or $N^\circ(n)$, for $3\le n\in \mathbb{N}$.

\subsection{Self-injective Nakayama algebra}
If one of $A$ and $B$ is isomorphic to $N^\circ(n)$ for some $n\ge 3$, say, $A\simeq N^\circ(n)$, then the quiver of $B$ should be of type $\mathbb{A}$; otherwise, $A\otimes B$ is representation-infinite by Theorem \ref{theo::no-cycle} and Theorem \ref{theo::B-type-D}. We have the following result.

\begin{proposition}\label{prop::NO(n)-A}
Let $A\simeq N^\circ(n)$ with some $n\ge 3$. Suppose $|B|\ge 3$ and the quiver of $B$ is of type $\mathbb{A}$. Then, $A\otimes B$ is representation-finite if and only if $B$ is isomorphic to one of $\mathbf{A}_3^{(++)}$, $\mathbf{A}_3^{(+-)}$, $\mathbf{A}_3^{(-+)}$, $\mathbf{B}_5$, $\mathbf{B}_5^{\op}$, or $B$ is a Nakayama algebra having no $\mathbf{B}_3$ as a quotient.
\end{proposition}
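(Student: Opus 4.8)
The plan is to reduce the statement to Theorem~\ref{theo::local-A} through the Galois covering technique, exactly along the lines of the last paragraph of the proof of Theorem~\ref{theo::NO(2)-A}. The crucial input is the observation recorded just after the definition of $N^\circ(n)$: every $N^\circ(n)$, for all $n\ge 1$ (including $N^\circ(1)\simeq k[x]/(x^2)$), has one and the same universal Galois covering $\widetilde{A}$, namely the infinite linear quiver $\cdots \to \circ \to \circ \to \cdots$ bounded by $x^2=0$. Since the quiver $Q_B$ is of type $\mathbb{A}$, the algebra $B$ is a tree algebra, hence simply connected, so that $\widetilde{B}=B$. By the construction of the universal Galois covering of a tensor product (the discussion following Theorem~\ref{theo::covering}), the universal Galois covering of $N^\circ(n)\otimes B$ is $\widetilde{A}\otimes B$, and this does not depend on $n$; in particular it coincides, as a bound quiver algebra, with the universal Galois covering of $k[x]/(x^2)\otimes B$.

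With this in hand the argument is a short chain of equivalences. By Corollary~\ref{cor::covering}, $N^\circ(n)\otimes B$ is representation-finite if and only if $\widetilde{A}\otimes B$ is locally representation-finite, which, again by Corollary~\ref{cor::covering}, is equivalent to $k[x]/(x^2)\otimes B$ being representation-finite. Since $|B|\ge 3$, only case~(2) of Theorem~\ref{theo::local-A} can occur, and that theorem states that $k[x]/(x^2)\otimes B$ is representation-finite precisely when $B$ is isomorphic to one of $\mathbf{A}_3^{(++)}$, $\mathbf{A}_3^{(+-)}$, $\mathbf{A}_3^{(-+)}$, $\mathbf{B}_5$, $\mathbf{B}_5^{\op}$, or $B$ is a Nakayama algebra having no $\mathbf{B}_3$ as a quotient algebra. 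Concatenating these equivalences yields the asserted classification for $N^\circ(n)\otimes B$.

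There is essentially no obstacle to overcome here: the representation-theoretic content — decomposing the idempotent truncations of $\widetilde{A}\otimes B$ into idempotent truncations of the algebras $N(s)\otimes B$ and invoking Theorem~\ref{theorem-2}(2)--(3) to decide their representation-finiteness — was already carried out in the proof of Theorem~\ref{theo::local-A} for $n=2$. The only point that genuinely has to be verified, and it is immediate from the explicit description of the coverings, is that $\widetilde{N^\circ(n)}$ is literally the same bound quiver algebra as $\widetilde{k[x]/(x^2)}$, so that the local representation type of $\widetilde{A}\otimes B$ is independent of $n$. Consequently Proposition~\ref{prop::NO(n)-A} is a formal corollary of Theorem~\ref{theo::local-A}.
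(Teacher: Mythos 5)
Your proposal is correct and is essentially the paper's own argument: the paper's proof is the single observation that the universal Galois covering of $N^\circ(n)$ coincides with that of $k[x]/(x^2)$, so the statement follows from Theorem \ref{theo::local-A}. You have merely spelled out the intermediate chain of equivalences via Corollary \ref{cor::covering}, which the paper leaves implicit.
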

\begin{proof}
Since the universal Galois covering of $N^\circ(n)$ coincides with that of $k[x]/(x^2)$, the statement follows from Theorem \ref{theo::local-A}.
\end{proof}

\subsection{Hereditary Nakayama algebra}
If one of $A$ and $B$ is isomorphic to $N(n)$ for some $n\ge 3$, say, $B\simeq N(n)$, then the quiver of $A$ is of type $\mathbb{A}$ or $\widetilde{\mathbb{A}}$ using Theorem \ref{theo::B-type-D}; otherwise, $A\otimes B$ will be representation-infinite. If the quiver of $A$ is of type $\mathbb{A}$ or $A$ is isomorphic to the algebra given by 
\begin{equation}\label{quiver-commutative}
\vcenter{\xymatrix@C=0.7cm@R=0.1cm{
&1\ar[r]&2 \ar[r]&\cdots \ar[r] &s\ar[dr] &\\
\circ \ar[ur] \ar[dr]\ar@{.}[rrrrr] &&&&&\circ \\
&1' \ar[r]  &2' \ar[r] &\cdots \ar[r] &t'\ar[ur] &}}, \quad  1\le s,t \in \mathbb{N},
\end{equation}
with the unique commutativity relation, then $A$ is simply connected and the representation-finiteness of $A\otimes B$ is completely determined in Theorem \ref{theorem-2}. Hence, we only deal with the type $\widetilde{\mathbb{A}}$ except for \eqref{quiver-commutative} in this subsection.

\begin{proposition}\label{prop::N(n)-O(3)}
Let $B\simeq N(n)$ with $n\ge 3$. Suppose the quiver $Q_A$ of $A$ is given by 
$$
\Delta_3: \vcenter{\xymatrix@C=0.5cm@R=0.5cm{
&2\ar[rd]^-\beta&\\
1\ar[ur]^-\alpha&&3\ar[ll]^-\gamma}} \quad \text{or} \quad \Delta_3': \vcenter{\xymatrix@C=0.5cm@R=0.5cm{
&2\ar[rd]^-\beta&\\
1\ar[rr]_-\gamma\ar[ur]^-\alpha&&3}}.
$$
Then, $A\otimes B$ is representation-finite if and only if $A$ is isomorphic $k\Delta_3/\langle \beta\gamma, \gamma\alpha \rangle$ or $N^\circ(3)$.
\end{proposition}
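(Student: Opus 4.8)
The plan is to pass to universal Galois coverings. Since $N(n)=k\overset{\rightarrow}{\mathbb{A}}_n/(\rad^2 k\overset{\rightarrow}{\mathbb{A}}_n)$ is a tree algebra it is simply connected, so $\widetilde{B}=B$ and the universal covering of $A\otimes B$ is $\widetilde{A}\otimes N(n)$; by Corollary~\ref{cor::covering} it thus suffices to decide when $\widetilde{A}\otimes N(n)$ is locally representation-finite. In every case $\widetilde{A}$ turns out to be a locally bounded Nakayama --- or radical-square-zero --- category whose underlying quiver is an infinite line, so every finite convex subcategory of $\widetilde{A}\otimes N(n)$ is an idempotent truncation of some $C'\otimes N(n)$ with $C'$ a finite convex subcategory of $\widetilde{A}$; I will therefore, for each $A$, locate a convex subcategory $C'$ of $\widetilde{A}$ for which $C'\otimes N(n)$ has a representation type already known from Theorem~\ref{theorem-2}.

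First I would dispose of the quiver $\Delta_3'$. Since $\alpha\beta$ is the only path of length $\ge 2$ and $\gamma$ has length one, the only bound quiver algebras with this quiver are the hereditary algebra $k\Delta_3'$ and $A_0:=k\Delta_3'/\langle\alpha\beta\rangle$. The first is hereditary of Euclidean type $\widetilde{\mathbb{A}}_2$, hence representation-infinite, so $k\Delta_3'\otimes N(n)$ is representation-infinite by Proposition~\ref{prop::quotient} (it is an idempotent truncation); and $k\Delta_3'$ is neither algebra in the statement, since its quiver is not $\Delta_3$. For $A_0$, which is radical-square-zero and hence representation-finite by Proposition~\ref{prop::rad-square-zero}, unwinding the unique cycle of $\Delta_3'$ shows that $\widetilde{A_0}$ is the radical-square-zero category whose quiver is the infinite line with $3$-periodic orientation $\to\,\to\,\leftarrow$. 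Its full convex subcategory on four consecutive vertices centred at a source is the hereditary algebra $\mathbf{A}_4^{\epsilon}$ of orientation type $\circ\to\circ\leftarrow\circ\to\circ$, and $\mathbf{A}_4^{\epsilon}\otimes N(n)$ is representation-infinite by Theorem~\ref{theorem-2}(3); hence $\widetilde{A_0}\otimes N(n)$ is not locally representation-finite and $A_0\otimes N(n)$ is representation-infinite.

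Next the quiver $\Delta_3$. By Proposition~\ref{prop::Nakayama} every such $A$ is Nakayama, and (its defining ideal being monomial) it is determined up to the rotational symmetry of $\Delta_3$ by a Kupisch series $(c_1,c_2,c_3)$ with $2\le c_i\le c_{i+1}+1$ cyclically; then $\widetilde{A}$ is the Nakayama category over the infinite linear quiver with $3$-periodic Kupisch series $\ldots,c_1,c_2,c_3,c_1,c_2,c_3,\ldots$. The crucial point is that $(c_1,c_2,c_3)$ is $(2,2,2)$ --- i.e.\ $A\cong N^\circ(3)$ --- or a rotation of $(3,2,2)$ --- i.e.\ $A\cong k\Delta_3/\langle\beta\gamma,\gamma\alpha\rangle$ --- precisely when no two consecutive $c_i$ are $\ge 3$, because the inequality $c_{i+1}\ge c_i-1$ already forces the right neighbour of any entry $\ge 4$ to be $\ge 3$. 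If two consecutive entries are $\ge 3$, then $\widetilde{A}$ has, on six consecutive vertices, a convex subcategory $C'$ (a Nakayama algebra over $\overset{\rightarrow}{\mathbb{A}}_6$) whose Kupisch series dominates that of $\mathbf{B}_2$ entrywise, so $\mathbf{B}_2$ is a quotient of $C'$; since $\mathbf{B}_2\otimes N(n)$ is representation-infinite for all $n\ge 3$ by Theorem~\ref{theorem-2}(2) (directly for $n=3$; for $n\ge 4$ because $\mathbf{B}_3\otimes N(n)$ is an idempotent truncation of it and is representation-infinite), it follows that $\widetilde{A}\otimes N(n)$ is not locally representation-finite and $A\otimes N(n)$ is representation-infinite. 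Conversely, for $A\cong N^\circ(3)$ the algebra $N^\circ(3)\otimes N(n)$ is representation-finite by Proposition~\ref{prop::NO(n)-A} (since $\rad^2 N(n)=0$ forces $N(n)$ to have no $\mathbf{B}_3$-quotient); and for $A\cong k\Delta_3/\langle\beta\gamma,\gamma\alpha\rangle$ the entries equal to $3$ of the periodic Kupisch series are isolated, so every finite convex subcategory $C'$ of $\widetilde{A}$ is a Nakayama algebra with $\rad^3 C'=0$ having none of $\mathbf{B}_1,\mathbf{B}_2,\mathbf{B}_3$ as a subquotient --- whence $C'\otimes N(n)$ is representation-finite by Theorem~\ref{theorem-2}(1)--(2), using \cite{ABM} for the radical-square-zero and the two-vertex cases --- and therefore $\widetilde{A}\otimes N(n)$ is locally representation-finite and $A\otimes N(n)$ is representation-finite.

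The hard part will be the covering bookkeeping: pinning down $\widetilde{A}$ exactly --- the unwinding of $\Delta_3'$, and how the $3$-periodic Kupisch series gets truncated at the two ends of a finite window --- and then verifying that the convex subcategories above really do surject onto $\mathbf{B}_2$, resp.\ are isomorphic to $\mathbf{A}_4^{\epsilon}$, while no such forbidden subcategory occurs in the two representation-finite cases. The remaining steps, namely enumerating the admissible ideals that $\Delta_3$ and $\Delta_3'$ carry and the entrywise Kupisch comparisons behind the quotient maps, are routine.
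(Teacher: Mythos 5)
Your proposal is correct, and for the main case ($Q_A=\Delta_3$) it is essentially the paper's argument in different clothing: your condition ``two cyclically adjacent Kupisch entries $\ge 3$'' is exactly the paper's condition that $A$ has $\mathbf{C}_1:=k\Delta_3/\langle \alpha\beta\gamma,\gamma\alpha\rangle$ as a quotient, after which both you and the paper exhibit $\mathbf{B}_2$ as a quotient of a window of $\widetilde{A}$ and quote Theorem \ref{theorem-2}(2), and in the complementary case both reduce local representation-finiteness of $\widetilde{A}\otimes N(n)$ to finite linear Nakayama truncations having no $\mathbf{B}_1,\mathbf{B}_2,\mathbf{B}_3$ quotient (your delegation of $N^\circ(3)$ to Proposition \ref{prop::NO(n)-A} is legitimate and non-circular, since that proposition rests only on Theorem \ref{theo::local-A}). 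The only genuine divergence is the subcase $Q_A=\Delta_3'$: the paper does not use coverings there at all, but writes down the separated quiver of $k\Delta_3'/\langle\alpha\beta\rangle\otimes N(3)$ and applies Proposition \ref{prop::rad-square-zero}, a short direct computation; you instead unwind the covering of $k\Delta_3'/\langle\alpha\beta\rangle$ to the $3$-periodic line $\to\to\leftarrow$, extract a hereditary convex window of type $\mathbb{A}_4$ with orientation $\to\leftarrow\to$, and invoke Theorem \ref{theorem-2}(3); this is also valid (that window is not among the representation-finite partners of $N(n)$ listed there), at the cost of leaning on the heavier classification where the paper uses only Gabriel's separated-quiver criterion. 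One small point to tidy when writing this up: for $n=3$ the criterion in Theorem \ref{theorem-2}(2) also excludes $\mathbf{B}_1^{\op}$ and $\mathbf{B}_2^{\op}$ as quotients, which you did not mention; your ``isolated $3$'s'' Kupisch argument disposes of these as well (their Kupisch series likewise contain two adjacent entries $\ge 3$), so nothing breaks.
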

\begin{proof}
(1) Suppose $Q_A=\Delta_3$. The covering of $Q_A$ is given by 
$$
\xymatrix@C=1cm@R=0.5cm{
\cdots\ar[r]&3\ar[r]^-\gamma&1\ar[r]^-\alpha&2\ar[r]^-\beta&3\ar[r]^-\gamma&1\ar[r]^-\alpha&2\ar[r]&\cdots }.
$$
If $A$ contains $\mathbf{C}_1:=k\Delta_3/\langle \alpha\beta\gamma, \gamma\alpha \rangle$ as a quotient algebra, then $\widetilde{A}$ contains 
$$
\mathbf{B}_2=k\left(
\xymatrix@C=0.75cm@R=0.5cm{
\circ\ar[r]^-\alpha&\circ\ar[r]^-\beta&\circ\ar[r]^-\gamma&\circ\ar[r]^-\delta&\circ\ar[r]^-\sigma &\circ}
\right)
\bigg/
\langle \alpha\beta\gamma, \gamma\delta \rangle
$$
as a quotient algebra. It is shown in Theorem \ref{theorem-2} (2) that $\mathbf{B}_2\otimes N(3)$ is representation-infinite, and hence, $\widetilde{A}\otimes N(n)$ is not locally representation-finite for any $n\ge 3$. 

If $A$ does not contain $\mathbf{C}_1$ as a quotient algebra, we have $a_1a_2=0, a_2a_3\neq 0$, or $a_1a_2\neq 0, a_2a_3= 0$, or $a_1a_2=a_2a_3=0$, in each subquiver 
$$
\xymatrix@C=1cm@R=0.5cm{
i_1 \ar[r]^-{a_1}&i_2\ar[r]^-{a_2}&i_3\ar[r]^-{a_3}&i_1}
$$
of $\widetilde{A}$, such that $\widetilde{A}$ contains no $\mathbf{B}_3$ as a quotient. Each idempotent truncation of $\widetilde{A}\otimes N(n)$ is known to be representation-finite by Theorem \ref{theorem-2} (2), such that $\widetilde{A}\otimes N(n)$ is locally representation-finite for any $n\ge 3$. In this case, $A$ is isomorphic to either $k\Delta_3/\langle \beta\gamma, \gamma\alpha \rangle$ or $N^\circ(3)$.

(2) If $Q_A=\Delta_3'$, then $A$ must be isomorphic to $k\Delta_3'/\langle \alpha\beta\rangle$; otherwise, $A$ is a representation-infinite hereditary algebra. The separated quiver of $A\otimes N(3)$ contains 
$$
\xymatrix@C=0.7cm@R=0.1cm{
+\ar[dr]&&&&&&+\ar[dl]\\
&-&+\ar[r]\ar[l]&-&+\ar[r]\ar[l]&-&\\
+\ar[ur]&&&&&&+\ar[ul]
}
$$
as a subquiver, and hence, $A\otimes N(3)$ is representation-infinite by Proposition \ref{prop::rad-square-zero}.
\end{proof}

\begin{proposition}\label{prop::N(n)-O(4)}
Let $B\simeq N(n)$ with $n\ge 3$. Suppose the quiver $Q_A$ of $A$ is given by 
$$
\Delta_4: \vcenter{\xymatrix@C=0.8cm@R=0.6cm{
1\ar[r]^-\alpha&2\ar[d]^-\beta\\
4\ar[u]^-\delta&3\ar[l]^-\gamma}} 
\quad \text{or} \quad 
\Delta_4': \vcenter{\xymatrix@C=0.8cm@R=0.6cm{
1\ar[r]^-\alpha\ar[d]_-\delta&2\ar[d]^-\beta\\
4&3\ar[l]^-\gamma}} 
\quad \text{or} \quad 
\Delta_4'': \vcenter{\xymatrix@C=0.8cm@R=0.6cm{
1\ar[r]^-\alpha\ar[d]_-\delta&2\ar[d]^-\beta\\
4\ar[r]_-\gamma&3}} 
\quad \text{or} \quad 
\Delta_4''': \vcenter{\xymatrix@C=0.8cm@R=0.6cm{
1\ar[r]^-\alpha\ar[d]_-\delta&2\\
4&3\ar[l]^-\gamma\ar[u]_-\beta}} 
$$
Then, $A\otimes B$ is representation-finite if and only if one of the followings hold:
\begin{enumerate}
\item[\rm{(1)}] $n=3$ and $A$ is isomorphic to one of $k\Delta_4/\langle \alpha\beta\gamma, \gamma\delta, \delta\alpha \rangle$.

\item[\rm{(2)}] $A$ is isomorphic to one of $k\Delta_4/\langle \beta\gamma, \delta\alpha \rangle$, $k\Delta_4/\langle \beta\gamma, \gamma\delta, \delta\alpha \rangle$ and $N^\circ(4)$.
\end{enumerate}
\end{proposition}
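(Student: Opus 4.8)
Every algebra occurring in the conclusion has quiver $\Delta_4$, so for $Q_A\in\{\Delta_4',\Delta_4'',\Delta_4'''\}$ it is enough to show $A\otimes B$ is representation-infinite, while for $Q_A=\Delta_4$ we must pin down exactly which $A$ work. The plan is to use the universal Galois covering (Corollary \ref{cor::covering}) to reduce everything to the simply connected classification of Theorem \ref{theorem-2}.

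First I dispose of the three non-cyclic orientations. Since $I_A\subseteq\rad^2 kQ_A$ we have $A/\rad^2 A\simeq kQ_A/\rad^2 kQ_A$, so it suffices to prove that the radical square zero algebra $R:=kQ_A/\rad^2 kQ_A$ satisfies: $R\otimes B$ is representation-infinite. For $Q_A=\Delta_4'''$ this is immediate, because $\rad^2 k\Delta_4'''=0$ forces $A=R=k\Delta_4'''$, which is hereditary of Euclidean type $\widetilde{\mathbb A}_3$ and hence already representation-infinite. For $Q_A\in\{\Delta_4',\Delta_4''\}$ I pass to the universal covering $\widetilde R$, whose quiver is the $\mathbb Z$-cover of the underlying $4$-cycle --- an infinite line along which the liftings of the sink (resp.\ source) of $Q_A$ occur as vertices of in-degree (resp.\ out-degree) $2$. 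Inspection shows that $\widetilde R$ has a finite idempotent truncation $C$ isomorphic to a path algebra of type $\mathbb A_4$ with non-linear orientation (in the case $\Delta_4'$, on the four vertices surrounding an adjacent source--sink pair) or to $\mathbf B_6$ (in the case $\Delta_4''$, on five consecutive vertices). Neither algebra is Nakayama and neither appears in the list of Theorem \ref{theorem-2}(3), so $C\otimes N(n)$ is representation-infinite for every $n\ge 3$; hence $\widetilde R\otimes N(n)$ is not locally representation-finite, so $R\otimes B$ is representation-infinite by Corollary \ref{cor::covering}, and therefore so is $A\otimes B$ by Proposition \ref{prop::quotient}. (For $\Delta_4''$ one may alternatively observe that the separated quiver of $Q_{\Delta_4''}\otimes\overset{\rightarrow}{\mathbb A}_2$ contains an oriented cycle, so that $R\otimes\mathbf A_2$, hence $R\otimes N(n)$, is representation-infinite by Proposition \ref{prop::rad-square-zero}.)

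Now let $Q_A=\Delta_4$. Then $A=k\Delta_4/I_A$ is a Nakayama algebra by Proposition \ref{prop::Nakayama}, with Kupisch series $(c_1,c_2,c_3,c_4)$ satisfying $c_i\ge 2$ and $c_i\le c_{i+1}+1$ cyclically; its universal covering $\widetilde A$ is the $\mathbb Z$-periodic linear Nakayama algebra on $\overset{\rightarrow}{\mathbb A}_\infty$ obtained by repeating this data. Every finite idempotent truncation of $\widetilde A\otimes N(n)$ is an idempotent truncation of one of the form $C\otimes N(n)$ with $C$ a finite idempotent truncation of $\widetilde A$, and conversely; hence, by Corollary \ref{cor::covering} and Proposition \ref{prop::quotient}, $A\otimes B$ is representation-finite if and only if $C\otimes N(n)$ is representation-finite for every such $C$. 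Each $C$ is a finite linear Nakayama algebra, so Theorem \ref{theorem-2}(2) (with $N(n)$ in the role of ``$A$'' there) applies: $C\otimes N(n)$ is representation-finite iff $C$ has no quotient isomorphic to $\mathbf B_3$ when $n\ge 4$, and iff $\rad^3 C=0$ and $C$ has no quotient isomorphic to $\mathbf B_1$ or $\mathbf B_2$ when $n=3$. A direct count with the Kupisch data shows that some finite idempotent truncation of $\widetilde A$ has a $\mathbf B_3$-quotient precisely when two cyclically consecutive $c_i$ are both $\ge 3$, and a $\mathbf B_1$- or $\mathbf B_2$-quotient precisely when three cyclically consecutive $c_i$ are all $\ge 3$, while $\rad^3\widetilde A=0$ iff all $c_i\le 3$. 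Using $c_i\le c_{i+1}+1$, the condition for $n\ge 4$ forces the Kupisch series of $A$ to be $(2,2,2,2)$, $(3,2,2,2)$ or $(3,2,3,2)$ up to rotation --- i.e.\ $A\simeq N^\circ(4)$, $k\Delta_4/\langle\beta\gamma,\gamma\delta,\delta\alpha\rangle$ or $k\Delta_4/\langle\beta\gamma,\delta\alpha\rangle$ --- while for $n=3$ one moreover allows $\{i:c_i=3\}$ to be an adjacent pair, yielding in addition $(3,3,2,2)$, i.e.\ $A\simeq k\Delta_4/\langle\alpha\beta\gamma,\gamma\delta,\delta\alpha\rangle$. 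Conversely, for each of these finitely many $A$ one checks directly that every finite idempotent truncation of $\widetilde A$ meets the displayed conditions, so the corresponding tensor product is representation-finite, and the proposition follows.

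The step I expect to be the main obstacle is the bookkeeping in the last paragraph: translating ``every finite idempotent truncation of the infinite linear Nakayama algebra $\widetilde A$ avoids $\mathbf B_1,\mathbf B_2,\mathbf B_3$ as quotient algebras'' into the clean criterion on $(c_1,c_2,c_3,c_4)$, i.e.\ determining exactly when a subinterval of a linear Nakayama algebra admits each $\mathbf B_i$ as a quotient. A secondary point to watch is that in the orientations $\Delta_4'$ and $\Delta_4''$ the smallest idempotent truncations of the covering are isomorphic to $\mathbf B_5$ or $\mathbf B_5^{\op}$, which are representation-finite against $N(n)$; one must therefore go to a sufficiently large truncation before a genuinely representation-infinite subquotient (a non-linear $\mathbb A_4$-path algebra, resp.\ $\mathbf B_6$) appears.
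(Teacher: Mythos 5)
Your proposal is correct, and for the central case $Q_A=\Delta_4$ it is essentially the paper's argument: pass to the universal covering, which is the $\mathbb{Z}$-periodic linear Nakayama algebra, and decide representation-finiteness of its finite truncations tensored with $N(n)$ via Theorem \ref{theorem-2}(2). You organize the combinatorics through the Kupisch series (no two cyclically consecutive $c_i\ge 3$ when $n\ge 4$; $\rad^3=0$ and no three cyclically consecutive $c_i\ge 3$ when $n=3$), whereas the paper phrases the same dichotomy through forbidden quotients ($\mathbf{A}_4^{(+++)}$, $\mathbf{C}_2=k\Delta_4/\langle\alpha\beta\gamma,\beta\gamma\delta,\delta\alpha\rangle$, $\mathbf{B}_3$) and then checks that $\widetilde A$ avoids $\mathbf{B}_1,\mathbf{B}_2,\mathbf{B}_3$; the resulting lists of algebras coincide. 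For the non-cyclic orientations your route genuinely differs: the paper kills one arrow of $\Delta_4'$ to obtain a non-linear $\mathbf{A}_4^{\epsilon}$ quotient of $A$ itself, and for $\Delta_4''$ exhibits a tame concealed algebra of type $\widetilde{\mathbb{A}}_5$ inside $A\otimes\mathbf{A}_2$, while you pass to $A/\rad^2 A$ and locate a non-linear $\mathbb{A}_4$ (resp.\ $\mathbf{B}_6$) idempotent truncation of its covering and quote Theorem \ref{theorem-2}(3); both are valid, and your treatment of $\Delta_4''$ has the mild advantage of working uniformly for every admissible ideal, including the commutative square that the paper excludes by its standing assumption.

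Three smaller points deserve repair. First, the asserted equivalence ``$A\otimes B$ is representation-finite iff $C\otimes N(n)$ is representation-finite for every finite idempotent truncation $C$ of $\widetilde A$'' is not a formal consequence of Corollary \ref{cor::covering} and Proposition \ref{prop::quotient}: the implication from finite truncations to local representation-finiteness fails for general locally bounded algebras (the hereditary infinite linear quiver is a counterexample) and here relies on the bounded support of indecomposables forced by the nilpotency relations; the paper is equally terse at this spot, but your citation claims more than those two results give. Second, in restating Theorem \ref{theorem-2}(2) for $n=3$ you dropped the opposite algebras: $\mathbf{B}_2^{\op}\not\simeq\mathbf{B}_2$ must also be excluded; fortunately any three residues in $\mathbb{Z}/4$ are cyclically consecutive, so your criterion and final list are unaffected, but the bookkeeping step should mention it. Third, the parenthetical alternative for $\Delta_4''$ is misstated: a separated quiver never contains an oriented cycle (every arrow goes from a $+$ vertex to a $-$ vertex); what is true, and suffices, is that its underlying graph contains a cycle, and Proposition \ref{prop::rad-square-zero} has to be applied to the radical-square-zero quotient of $R\otimes\mathbf{A}_2$ rather than to $R\otimes\mathbf{A}_2$ itself.
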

\begin{proof}
If $Q_A=\Delta_4'$, then $A$ contains $\mathbf{A}_4^{(-+-)}$ as a quotient, and $A\otimes N(3)$ is representation-infinite by Theorem \ref{theorem-2} (2). If $Q_A=\Delta_4''$, then $A\otimes \mathbf{A}_2$ contains a tame concealed algebra of type $\widetilde{\mathbb{A}}_5$ as a quotient, i.e., the bullets in the following quiver
$$
\vcenter{\xymatrix@C=0.3cm@R=0.2cm{& \circ \ar[rr]\ar'[d][dd] \ar[dl]& & \bullet \ar[dd]\ar[dl]\\ \bullet \ar[rr]\ar[dd]& & \bullet \ar[dd]\\& \bullet \ar[dl]\ar'[r][rr]& & \bullet \ar[dl]\\ \bullet\ar[rr]& & \circ}}.
$$
If $Q_A=\Delta_4'''$, then $A$ is a representation-infinite hereditary algebra. In all three cases, $A\otimes B$ is representation-infinite.

Suppose $Q_A=\Delta_4$. The covering of $Q_A$ is given by 
$$
\xymatrix@C=0.7cm@R=0.5cm{
\cdots\ar[r]&4\ar[r]^-\delta&1\ar[r]^-\alpha&2\ar[r]^-\beta&3\ar[r]^-\gamma&4\ar[r]^-\delta&1\ar[r]^-\alpha&2\ar[r]^-\beta&3\ar[r] &\cdots }.
$$
\begin{itemize}
\item $n=3$. If $\rad^3 A \neq 0$, then $A$ contains $\mathbf{A}_4^{(+++)}$ as a quotient such that $A\otimes N(3)$ is obviously representation-infinite. Suppose $\rad^3 A=0$. If $A$ contains $\mathbf{C}_2:=k\Delta_4/\langle \alpha\beta\gamma, \beta\gamma\delta, \delta\alpha \rangle$ as a quotient, then $\widetilde{A}$ contains $\mathbf{B}_2$ as a quotient. It is mentioned in Theorem \ref{theorem-2} (2) that $\mathbf{B}_2\otimes N(3)$ is representation-infinite, and hence, $\widetilde{A}\otimes N(3)$ is not locally representation-finite. 

If $A$ contains no $\mathbf{C}_2$ as a quotient (and $\rad^3 A=0$), then at least one of $a_1a_2=0$, $a_2a_3=0$, $a_3a_4=0$ holds in each subquiver 
\begin{center}
$\xymatrix@C=1cm@R=0.5cm{
i_1 \ar[r]^-{a_1}&i_2\ar[r]^-{a_2}&i_3\ar[r]^-{a_3}&i_4\ar[r]^-{a_4} &i_1}$
\end{center}
of $\widetilde{A}$, such that $\widetilde{A}$ contains no 
\begin{center}
$\mathbf{B}_1=k\left(
\xymatrix@C=1cm@R=0.5cm{
\circ\ar[r]^-\alpha&\circ\ar[r]^-\beta&\circ\ar[r]^-\gamma&\circ\ar[r]^-\delta&\circ}
\right)
\bigg/
\langle \alpha\beta\gamma, \beta\gamma\delta \rangle$
\end{center}
as a quotient; if moreover, $a_1a_2\neq 0$ and $a_2a_3\neq 0$ in the following subquiver
\begin{center}
$\xymatrix@C=1cm@R=0.5cm{
i_1 \ar[r]^-{a_1}&i_2\ar[r]^-{a_2}&i_3\ar[r]^-{a_3}&i_4\ar[r]^-{a_4} &i_1 \ar[r]^-{a_1}&i_2}$
\end{center}
of $\widetilde{A}$, then we must have $a_3a_4=a_4a_1=0$ such that $\widetilde{A}$ contains no $\mathbf{B}_2$ as a quotient. It then follows from Theorem \ref{theorem-2} (2) that each idempotent truncation of $\widetilde{A}\otimes N(3)$ is representation-finite. We conclude that $\widetilde{A}\otimes N(3)$ is locally representation-finite. In this case, $A$ is isomorphic to one of $k\Delta_4/\langle \alpha\beta\gamma, \gamma\delta, \delta\alpha \rangle$, $k\Delta_4/\langle \beta\gamma, \delta\alpha \rangle$, $k\Delta_4/\langle \beta\gamma, \gamma\delta, \delta\alpha \rangle$ and $N^\circ(4)$.

\item $n\geq 4$. If $A$ contains
\begin{center}
$\mathbf{B}_3=k\left(
\xymatrix@C=1cm@R=0.5cm{
\circ\ar[r]^-\alpha&\circ\ar[r]^-\beta&\circ\ar[r]^-\gamma&\circ}
\right)
\bigg/
\langle\alpha\beta\gamma\rangle$
\end{center}
as a quotient, then $N(n)\otimes \mathbf{B}_3$ is representation-infinite for any $n\geq 4$, mentioned in Theorem \ref{theorem-2} (2). If $A$ contains no $\mathbf{B}_3$ as a quotient, then we have $a_1a_2=0, a_2a_3\neq 0$, or $a_1a_2\neq 0, a_2a_3= 0$, or $a_1a_2=a_2a_3=0$, in each subquiver 
\begin{center}
$\xymatrix@C=1cm@R=0.5cm{
i_1\ar[r]^-{a_1}&i_2\ar[r]^-{a_2}&i_3\ar[r]^-{a_3}&i_4}$
\end{center}
of $\widetilde{A}$, such that $\widetilde{A}$ contains no $\mathbf{B}_3$ as a quotient. It follows from Theorem \ref{theorem-2} (2) again that each idempotent truncation of $\widetilde{A}\otimes N(n)$ is representation-finite, and then, $\widetilde{A}\otimes N(n)$ is locally representation-finite for any $n\ge 4$. In this case, $A$ is isomorphic to one of $k\Delta_4/\langle \beta\gamma, \delta\alpha \rangle$, $k\Delta_4/\langle \beta\gamma, \gamma\delta, \delta\alpha \rangle$ and $N^\circ(4)$.
\end{itemize}
We have completed the proof.
\end{proof}

\begin{proposition}\label{prop::N(n)-O(5)}
Let $|A|=5$ and $B\simeq N(n)$ with $n\ge 3$. Suppose the quiver of $A$ is of type $\widetilde{\mathbb{A}}$ and $A$ is not the algebra in \eqref{quiver-commutative}.
Then, $A\otimes B$ is representation-finite if and only if  one of the followings hold:
\begin{itemize}
\item $n=3$ and $A$ is isomorphic to $k\Delta_5/\langle \alpha\beta\gamma, \gamma\delta, \delta\sigma, \sigma\alpha \rangle$, where
\begin{center}
$\Delta_5: \xymatrix{1 \ar[r]_\alpha & 2 \ar[r]_\beta &3 \ar[r]_\gamma &4 \ar[r]_\delta & 5 \ar@/_1pc/[llll]_\sigma}$.
\end{center}

\item $A$ is isomorphic to one of $k\Delta_5/\langle \beta\gamma, \gamma\delta, \sigma\alpha \rangle$, $k\Delta_5/\langle \beta\gamma, \gamma\delta, \delta\sigma, \sigma\alpha \rangle$ and $N^\circ(5)$.
\end{itemize}
\end{proposition}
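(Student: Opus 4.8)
The plan is to follow the pattern of Propositions \ref{prop::N(n)-O(3)} and \ref{prop::N(n)-O(4)}, splitting the argument according to the orientation of the underlying $5$-cycle: the cyclic one, $Q_A=\Delta_5$, and the non-cyclic ones.

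For the non-cyclic orientations I would argue that none of them occurs in the classification. If $I_A=0$ then $A$ is hereditary of type $\widetilde{\mathbb{A}}_4$, hence representation-infinite, and so is $A\otimes N(n)$, which has $A$ as a quotient since $N(n)\twoheadrightarrow k$. If $A$ is simply connected then it must carry a commutativity relation (its underlying graph being a cycle), so $A\not\simeq N(m)$ and $A\otimes N(m)$ is representation-infinite by Theorem \ref{theorem-2}; this also handles all quotients of \eqref{quiver-commutative} retaining a commutativity relation, while \eqref{quiver-commutative} itself is excluded by hypothesis. In the remaining cases $A$ has only monomial relations, so its universal Galois covering $\widetilde{A}$ lives on an infinite linear quiver $\overset{\rightarrow}{\mathbb{A}}_{\infty}$ with a periodic, non-uniform orientation. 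Any such quiver contains four consecutive vertices spanning an $\mathbb{A}_4$-subquiver with non-constant orientation; the resulting idempotent truncation of $\widetilde{A}$ surjects onto the hereditary algebra $\mathbf{A}_4^\omega$ of that type, and since $\mathbf{A}_4^\omega$ is none of the exceptional algebras in Theorem \ref{theorem-2}(3), $\mathbf{A}_4^\omega\otimes N(3)$ is representation-infinite. Hence $\widetilde{A}\otimes N(n)$ is not locally representation-finite and $A\otimes B$ is representation-infinite by Corollary \ref{cor::covering}. (When $Q_A$ has two sources this is more direct still: deleting a source already gives such a truncation of $A$ itself.)

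The substance is the case $Q_A=\Delta_5$. Here $\widetilde{A}$ lives on the infinite linear quiver with arrows labelled periodically $\ldots,\sigma,\alpha,\beta,\gamma,\delta,\sigma,\alpha,\ldots$ and carrying the translates of the (necessarily monomial) relations of $A$; it is a Nakayama algebra. As $N(n)$ is simply connected, $\widetilde{A}\otimes N(n)$ is the universal Galois covering of $A\otimes N(n)$, so by Corollary \ref{cor::covering} it suffices to decide when $\widetilde{A}\otimes N(n)$ is locally representation-finite. Since its quiver is a grid, every finite convex full subcategory is contained in one of the form $C\otimes N(m)$ with $C$ a finite ``window'' (interval) of $\widetilde{A}$, itself a Nakayama algebra $k\overset{\rightarrow}{\mathbb{A}}_{\ell}/J$, and $m\le n$; as the worst case is $m=n$, local representation-finiteness is equivalent to $C\otimes N(n)$ being representation-finite for every window $C$, a finite check by periodicity. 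Theorem \ref{theorem-2} turns this into: for $n\ge 4$, that $\widetilde{A}$ contains no quotient isomorphic to $\mathbf{B}_3$; and for $n=3$, that $\rad^3\widetilde{A}=0$ and $\widetilde{A}$ contains no quotient isomorphic to $\mathbf{B}_1$, $\mathbf{B}_2$, $\mathbf{B}_1^{\op}$ or $\mathbf{B}_2^{\op}$. Using that $\widetilde{A}$ is Nakayama (each vertex begins a unique path), a brief computation of the induced relation ideal of a $4$-, $5$-, respectively $6$-vertex window shows that such a window surjects onto $\mathbf{B}_3$, $\mathbf{B}_1$, respectively $\mathbf{B}_2$ precisely when two, three, respectively four cyclically consecutive length-$2$ paths of $A$ exhibit the corresponding nonzero pattern; and $\rad^3\widetilde{A}=0\iff\rad^3A=0$. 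So everything is controlled by the subset $S\subseteq\mathbb{Z}/5$ of the five length-$2$ paths of $k\Delta_5$ that are nonzero in $A$.

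It remains to run the combinatorics on $S$. If $\rad^3A\neq 0$, some $4$-vertex window of $\widetilde{A}$ is isomorphic to $\mathbf{A}_4^{(+++)}$, and $\mathbf{A}_4^{(+++)}\otimes N(3)$ is representation-infinite by Theorem \ref{theorem-2}(2) (while for $n\ge 4$ this window already has $\mathbf{B}_3$ as a quotient); so $\rad^3A=0$, and then $A$ is determined by $S$. For $n\ge 4$: ``no $\mathbf{B}_3$'' forces $S$ to have no two cyclically consecutive elements, so $|S|\le 2$, with the two elements non-adjacent when $|S|=2$; up to the rotation of $\Delta_5$ this leaves exactly $S=\varnothing$ ($A\simeq N^\circ(5)$), $|S|=1$ ($A\simeq k\Delta_5/\langle\beta\gamma,\gamma\delta,\delta\sigma,\sigma\alpha\rangle$) and $|S|=2$ non-adjacent ($A\simeq k\Delta_5/\langle\beta\gamma,\gamma\delta,\sigma\alpha\rangle$). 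For $n=3$: one additionally forbids three consecutive elements of $S$ (from $\mathbf{B}_1$) and the cyclic patterns ``two consecutive nonzero, one zero, one nonzero'' and its mirror (from $\mathbf{B}_2$, $\mathbf{B}_2^{\op}$); running through the subsets of $\mathbb{Z}/5$ shows the only new admissible case is $S$ = two consecutive elements, giving $A\simeq k\Delta_5/\langle\alpha\beta\gamma,\gamma\delta,\delta\sigma,\sigma\alpha\rangle$, and conversely each of the four listed algebras passes every window condition. Combined with the non-cyclic analysis, this gives the stated classification. The main obstacle is organisational rather than conceptual: handling all non-cyclic orientations uniformly, and tracking exactly which induced relation ideal a finite window of $\widetilde{A}$ carries when testing surjections onto $\mathbf{B}_1$, $\mathbf{B}_2$, $\mathbf{B}_3$.
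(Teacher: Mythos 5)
Your treatment of the cyclic case $Q_A=\Delta_5$ follows essentially the paper's route (pass to the universal covering, test finite windows against Theorem \ref{theorem-2}(2), and run the combinatorics on the set $S$ of nonvanishing length-two paths); that part is sound. The genuine gap is in the non-cyclic case. You claim that the covering of any non-cyclically oriented $5$-cycle contains four consecutive vertices whose idempotent truncation ``surjects onto the hereditary algebra $\mathbf{A}_4^\omega$''. The logic here is reversed: that truncation is a \emph{quotient} of $k\mathbb{A}_4^\omega$, since the relations of $A$ descend to it, and representation-infiniteness does not pass to quotients. So the window argument only works when the window happens to be relation-free, which you have not ensured and which fails in general.

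Concretely, take the $5$-cycle with exactly one source and one sink, $1\xrightarrow{\,a\,}2\xrightarrow{\,b\,}3\xrightarrow{\,c\,}4\xleftarrow{\,d\,}5\xleftarrow{\,e\,}1$, and $A=kQ/\langle ab,\,bc,\,ed\rangle$. The edge-orientation pattern of the covering is $(+,+,+,-,-)$ repeated, so no three consecutive edges form a zigzag, and every $4$-vertex window carries a relation: up to isomorphism the five windows are $N(4)$ and copies of $\mathbf{B}_5$ or $\mathbf{B}_5^{\op}$, all of which tensor representation-finitely with $N(n)$ by Theorem \ref{theorem-2}. Hence your test detects no obstruction, yet the proposition asserts that $A\otimes N(n)$ is representation-infinite. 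The obstruction only appears at the $5$-vertex level: the window $2\to3\to4\leftarrow5\leftarrow1$ with both length-two paths zero is exactly $\mathbf{B}_7$, and $\mathbf{B}_7\otimes N(3)$ is representation-infinite by Theorem \ref{theorem-2}(3). This is precisely how the paper handles the sink case: either $A$ has some $\mathbf{A}_4^\epsilon$ as a quotient, or all the relevant length-two paths vanish and $\mathbf{B}_7$ occurs as a quotient. Your argument must be supplemented by this $\mathbf{B}_7$ case (your parenthetical remark about two sources does not help here, since the problematic orientations are exactly those with a single source and a single sink).
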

\begin{proof}
If $Q_A$ admits a sink, then $Q_A$ contains 
$$
\xymatrix{\circ\ar@{-}[r] &\circ\ar[r]  &\circ &\circ\ar[l] &\circ\ar@{-}[l]}
$$
as a subquiver. If $A$ contains $\mathbf{A}_4^\epsilon$ for some $\epsilon$ as a quotient, then $A\otimes N(3)$ is known to be representation-infinite. Otherwise, $A$ contains 
\begin{center}
$\mathbf{B}_7=k\left(
\xymatrix@C=0.85cm@R=0.5cm{
\circ\ar[r]^-\delta &\circ\ar[r]^-\alpha&\circ&\circ\ar[l]_-\beta&\circ\ar[l]_-\gamma}
\right)
\bigg/
\langle \gamma\beta, \delta\alpha \rangle$
\end{center}
as a quotient, and $\mathbf{B}_7\otimes N(3)$ is again representation-infinite by Theorem \ref{theorem-2} (3).

If $Q_A$ admits no sink, then it is an orientated cycle, i.e., $Q_A=\Delta_5$.
\begin{itemize}
\item $n=3$. We may assume $\rad^3 A=0$. It is known from Theorem \ref{theorem-2} (2) that $N(3)\otimes \mathbf{B}_1$ is representation-infinite, where
\begin{center}
$\mathbf{B}_1=k\left(
\xymatrix@C=1cm@R=0.5cm{
\circ\ar[r]^-\alpha&\circ\ar[r]^-\beta&\circ\ar[r]^-\gamma&\circ\ar[r]^-\delta&\circ}
\right)
\bigg/
\langle \alpha\beta\gamma, \beta\gamma\delta \rangle$
\end{center}
If $A$ contains $\mathbf{C}_3:=k\Delta_5/\langle \alpha\beta\gamma, \gamma\delta, \sigma\alpha \rangle$ as a quotient algebra, then the universal Galois covering $\widetilde{A}$ of $A$ contains 
\begin{center}
$\mathbf{B}_2=k\left(
\xymatrix@C=0.75cm@R=0.5cm{
\circ\ar[r]^-\alpha&\circ\ar[r]^-\beta&\circ\ar[r]^-\gamma&\circ\ar[r]^-\delta&\circ\ar[r]^-\sigma &\circ}
\right)
\bigg/
\langle \alpha\beta\gamma, \gamma\delta \rangle$
\end{center}
as a quotient; since $\mathbf{B}_2\otimes N(3)$ is representation-infinite following Theorem \ref{theorem-2} (2), we conclude that $\widetilde{A}\otimes N(3)$ is not locally representation-finite.

If $A$ contains no $\mathbf{B}_1$ as a quotient (and $\rad^3 A=0$), then $\widetilde{A}$ contains no $\mathbf{B}_1$ as a quotient; if moreover, $A$ contains no $\mathbf{C}_3$ as a quotient, then $a_1a_2\neq 0$ and $a_2a_3\neq 0$ in the following subquiver
\begin{center}
$\xymatrix@C=1cm@R=0.5cm{
i_1 \ar[r]^-{a_1}&i_2\ar[r]^-{a_2}&i_3\ar[r]^-{a_3}&i_4\ar[r]^-{a_4} &i_5 \ar[r]^-{a_5}&i_1}$
\end{center}
of $\widetilde{A}$ must imply $a_3a_4=a_4a_5=0$, such that $\widetilde{A}$ contains no $\mathbf{B}_2$ as a quotient. We then use Theorem \ref{theorem-2} (2) to deduce that $\widetilde{A}\otimes N(3)$ is locally representation-finite. In this case, $A$ is isomorphic to one of $k\Delta_5/\langle \alpha\beta\gamma, \gamma\delta, \delta\sigma, \sigma\alpha \rangle$, $k\Delta_5/\langle \beta\gamma, \gamma\delta, \sigma\alpha \rangle$, $k\Delta_5/\langle \beta\gamma, \gamma\delta, \delta\sigma, \sigma\alpha \rangle$ and $N^\circ(5)$.

\item $n\ge 4$. If $A$ contains no $\mathbf{B}_3$ as a quotient, then it is easy to show that $\widetilde{A}\otimes N(n)$ is locally representation-finite, we omit the details. In this case, $A$ is isomorphic to one of $k\Delta_5/\langle \beta\gamma, \gamma\delta, \sigma\alpha \rangle$, $k\Delta_5/\langle \beta\gamma, \gamma\delta, \delta\sigma, \sigma\alpha \rangle$ and $N^\circ(5)$.
\end{itemize}
We have finished the proof.
\end{proof}

\begin{theorem}\label{theo::NO(n)-N(n)}
Let $B\simeq N(n)$ for some $n\ge 3$. Suppose the quiver of $A$ is of type $\widetilde{\mathbb{A}}$ and $A$ is not the algebra in \eqref{quiver-commutative}. Then, $A\otimes B$ is representation-finite if and only if the quiver of $A$ is an oriented cycle, and one of the following holds.
\begin{enumerate}
\item[\rm{(1)}] $A$ is isomorphic one of $k\Delta_3/\langle \beta\gamma, \gamma\alpha \rangle$,  $k\Delta_4/\langle \beta\gamma, \delta\alpha \rangle$, $k\Delta_4/\langle \beta\gamma, \gamma\delta, \delta\alpha \rangle$, $k\Delta_5/\langle \beta\gamma, \gamma\delta, \sigma\alpha \rangle$, $k\Delta_5/\langle \beta\gamma, \gamma\delta, \delta\sigma, \sigma\alpha \rangle$, $N^\circ(3)$, $N^\circ(4)$ and $N^\circ(5)$.

\item[\rm{(2)}] $n=3$ and $A$ is isomorphic to $k\Delta_4/\langle \alpha\beta\gamma, \gamma\delta, \delta\alpha \rangle$ or $k\Delta_5/\langle \alpha\beta\gamma, \gamma\delta, \delta\sigma, \sigma\alpha \rangle$.

\item[\rm{(3)}] $n=3$, $|A|\ge 6$, $\rad^3 A=0$ and $A$ does not contain $\mathbf{B}_1$ or $\mathbf{B}_2$ as a quotient algebra.

\item[\rm{(4)}] $n\ge 4$, $|A|\ge 6$ and $A$ does not contain $\mathbf{B}_3$ as a quotient algebra.
\end{enumerate}
\end{theorem}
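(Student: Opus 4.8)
The plan is to assemble the statement from the case analysis already carried out in Propositions~\ref{prop::N(n)-O(3)}, \ref{prop::N(n)-O(4)} and~\ref{prop::N(n)-O(5)}, which settle $|A|=3,4,5$ — and in particular show that for these sizes $A\otimes B$ is representation-infinite unless $Q_A$ is an oriented cycle — and then to handle $|A|\ge 6$. By Corollary~\ref{coro::double-arrow} we may assume $Q_A$ has no double arrow, so $Q_A$ is a cycle in which every vertex has exactly two incident edges.

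Suppose first that $Q_A$ is not an oriented cycle. Then $Q_A$ has a sink $v$ of in-degree two, hence — since $|A|\ge 6>5$ — a full subquiver on five distinct consecutive vertices of the shape $\circ-\circ\to v\leftarrow\circ-\circ$. Annihilating all arrows of $A$ outside this subquiver (and the idempotents at the remaining vertices) yields a quotient $A'$ of $A$ whose quiver is of type $\mathbb{A}_5$ with a degree-two sink in the middle, and the dichotomy in the proof of Proposition~\ref{prop::N(n)-O(5)} shows that such an $A'$ has either a path algebra $\mathbf{A}_4^\epsilon$ of type $\mathbb{A}_4$ or $\mathbf{B}_7$ as a quotient. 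Since $\mathbf{A}_4^\epsilon\otimes N(3)$ and $\mathbf{B}_7\otimes N(3)$ are representation-infinite by Theorem~\ref{theorem-2}, and $N(n)\twoheadrightarrow N(3)$ for $n\ge 3$, the algebra $A\otimes B$ surjects onto the representation-infinite algebra $A'\otimes N(3)$ and is therefore representation-infinite. Thus $A\otimes B$ representation-finite forces $Q_A$ to be an oriented cycle.

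It remains to treat $Q_A=\Delta_m$ with $m\ge 6$ via the Galois covering argument of Sections~4--5. The universal Galois covering $\widetilde{A}$ of $A=k\Delta_m/I$ is the bi-infinite linear quiver bounded by the lifts of the relations of $A$, and $\widetilde{B}=B=N(n)$ since $N(n)$ is simply connected; so by Corollary~\ref{cor::covering}, $A\otimes B$ is representation-finite if and only if $\widetilde{A}\otimes N(n)$ is locally representation-finite, i.e.\ if and only if every idempotent truncation $\Gamma\otimes N(t)$, with $t\le n$ and $\Gamma$ a consecutive piece of $\widetilde{A}$, is representation-finite. Since $\Gamma$ is a finite linear Nakayama algebra and $\Gamma\otimes N(t)$ is a truncation of $\Gamma\otimes N(n)$, this reduces to deciding which $\Gamma\otimes N(n)$ are representation-finite, which is exactly what Theorem~\ref{theorem-2} does, with $N(n)$ as the first factor and the Nakayama algebra $\Gamma$ as the second: it is representation-finite precisely when $\rad^2\Gamma=0$ — then $\Gamma\simeq N(s)$ and $N(s)\otimes N(n)$ is representation-finite by Theorem~\ref{theorem-2}(1) (or Theorem~\ref{theorem-1} if $s\le 2$) — or $\rad^2\Gamma\ne0$ together with $\rad^3\Gamma=0$ and no $\mathbf{B}_1$, $\mathbf{B}_2$ quotient when $n=3$, and no $\mathbf{B}_3$ quotient when $n\ge4$. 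Because $m\ge 6$, a consecutive piece of $\widetilde{A}$ has $\mathbf{B}_1$ (resp.\ $\mathbf{B}_2$, $\mathbf{B}_3$) as a quotient if and only if $A$ does, and $\rad^3\widetilde{A}=0$ if and only if $\rad^3 A=0$; together with $\mathbf{B}_1^{\op}\simeq\mathbf{B}_1$, $\mathbf{B}_3^{\op}\simeq\mathbf{B}_3$ and the fact that a nonzero length-three path in $A$ already produces a $\mathbf{B}_3$ quotient, this turns the above into clause~(3) for $n=3$ and clause~(4) for $n\ge4$. Finally, $N^\circ(m)$ has radical square zero, so for $m\ge 6$ it falls under these clauses; and Propositions~\ref{prop::N(n)-O(3)}--\ref{prop::N(n)-O(5)} supply clauses~(1) and~(2) for $m\le 5$. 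This proves the theorem.

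The sink reduction for $|A|\ge 6$ is routine, a short finite check modelled on Proposition~\ref{prop::N(n)-O(5)}. The substantive part is the oriented-cycle case: one has to make precise the equivalence ``$\widetilde{A}\otimes N(n)$ locally representation-finite $\iff$ every finite interval truncation $\Gamma\otimes N(n)$ is representation-finite'', and then verify that the conditions governing the unboundedly many (and unboundedly large) pieces $\Gamma$ collapse to the finitely checkable conditions on $A$ in clauses~(3) and~(4). This last bookkeeping uses the periodicity of $\widetilde{A}$ over $\Delta_m$ and the hypothesis $m\ge 6$ in an essential way, and is where one must be careful.
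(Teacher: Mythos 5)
Your proposal follows essentially the same route as the paper: reduce to $|A|\le 5$ via Propositions \ref{prop::N(n)-O(3)}--\ref{prop::N(n)-O(5)}, and for $|A|\ge 6$ pass to the universal Galois covering, whose idempotent truncations are linear Nakayama algebras, and invoke Theorem \ref{theorem-2}(2), noting that for $m\ge 6$ the forbidden quotients $\mathbf{B}_1$, $\mathbf{B}_2$, $\mathbf{B}_3$ appear in $\widetilde{A}$ if and only if they appear in $A$. You are in fact slightly more explicit than the paper in ruling out the non-oriented-cycle case for $|A|\ge 6$ via the sink argument of Proposition \ref{prop::N(n)-O(5)}; the one point you share with the paper rather than resolve is that Theorem \ref{theorem-2}(2) also excludes $\mathbf{B}_2^{\op}$ (which, unlike $\mathbf{B}_1^{\op}$ and $\mathbf{B}_3^{\op}$, is not isomorphic to $\mathbf{B}_2$), so strictly speaking clause (3) should be read as forbidding $\mathbf{B}_2^{\op}$ as well.
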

\begin{proof}
According to Proposition \ref{prop::N(n)-O(3)}, Proposition \ref{prop::N(n)-O(4)} and Proposition \ref{prop::N(n)-O(5)}, we only need to consider the case $|A|\ge 6$. In this case, $A$ contains one of $\mathbf{B}_1$, $\mathbf{B}_2$, $\mathbf{B}_3$ as a quotient if and only if $\widetilde{A}$ does. Since each idempotent truncation of $\widetilde{A}$ is a Nakayama algebra, then the statements follow from Theorem \ref{theorem-2} (2). In particular, the case $A\simeq N^\circ(m)$ is already considered in Proposition \ref{prop::NO(n)-A}.
\end{proof}

\vspace{0.5cm}
\section*{Acknowledgements}
The author is partially supported by National Key Research and Development Program of China (Grant No. 2020YFA0713000) and China Postdoctoral Science Foundation (Grant No. YJ20220119 and No. 2023M731988).

\end{document}